\renewcommand{\p@subfigure}{\thefigure}
\numberwithin{equation}{section} \numberwithin{figure}{section}
\newcommand\rd{{\mathrm{d}}} 
\newcommand\rmm{{\mathrm{m}}}
\newcommand\rs{{\mathrm{s}}}
\def\bA{\mathbf{A}}
\def\bC{\mathbf{C}}
\def\bD{\mathbf{D}}
\def\bF{\mathbf{F}} 
\def\bG{\mathbf{G}}
\def\bI{\mathbf{I}}
\def\bK{\mathbf{K}}
\def\bR{\mathbf{R}}
\def\bS{\mathbf{S}}
\def\bU{\mathbf{U}} 
\def\bM{\mathbf{M}}
\def\bV{\mathbf{V}} 
\def\bI{\mathbf{I}}
\def\bW{\mathbf{W}}
\def\bu{\mathbf{u}} 
\def\bzero{\mathbf{0}}
\def\cL{\mathcal{L}}
\def\cO{\mathcal{O}}
\def\bx{\mathbf{x}} 
\def\be{\mathbf{e}}
\def\bg{\mathbf{g}}
\def\bc{\mathbf{c}}
\def\bX{\mathbf{X}}
\def\br{\mathbf{r}}
\def\bLambda{\mathbf{\Lambda}}
\def\bvarphi{\vec{\varphi}}
\def\blambda{\vec{\lambda}}
\def\bphi{\vec{\phi}}
\def\balpha{\vec{\alpha}}
\def\bbeta{\vec{\beta}}
\def\diag{{\text{diag}}}
\def\R{\mathbb R}
\def\RmOne{{\bf{\rm{\uppercase\expandafter{\romannumeral1}}}}}
\def\RmTwo{{\bf{\rm{\uppercase\expandafter{\romannumeral2}}}}}
\renewcommand{\vec}[1]{\mbox{\boldmath $#1$}}
\theoremstyle{remark} \newtheorem{remark}{Remark}}
\newtheorem{theorem}{Theorem}[section]
\newtheorem{lemma}{Lemma}[section]
\newcommand{\Rmnum}[1]{\expandafter\@slowromancap\romannumeral #1@}
\begin{document} 

\title[Sediment Transport]{A Second Order Time Homogenized Model for
  Sediment Transport}

\author[Y.-C. Jiang]{Yuchen Jiang} 
\address{School of Mathematical Sciences, Peking University, Beijing,
  P. R. China.} 
\email{jiangyuchen@pku.edu.cn}

\author[R. Li]{Ruo Li}
\address{HEDPS \& CAPT, LMAM \& School of Mathematical Sciences,
  Peking University, Beijing, P. R. China.}
\email{rli@math.pku.edu.cn}

\author[S.-N. Wu]{Shuonan Wu}
\address{Department of Mathematics, The Pennsylvania State University,
  University Park, PA, 16802, USA}
\email{wsn1987@gmail.com}

\date{}
\maketitle

\begin{abstract}
  A multi-scale method for the hyperbolic systems governing sediment
  transport in subcritical case is developed. The scale separation of
  this problem is due to the fact that the sediment transport is much
  slower than flow velocity. We first derive a zeroth order
  homogenized model, and then propose a first order correction. It is
  revealed that the first order correction for hyperbolic systems has
  to be applied on the characteristic speed of slow variables in one
  dimensional case. In two dimensional case, besides the
  characteristic speed, the source term is also corrected.  We develop
  a second order numerical scheme following the framework of
  heterogeneous multi-scale method. The numerical results in both one
  and two dimensional cases demonstrate the effectiveness and
  efficiency of our method.
\end{abstract}

\smallskip
{\footnotesize
\textbf{Keywords.} homogenization, multi-scale method, first
order correction, sediment transport} 

\smallskip 
{\footnotesize 
\textbf{AMS subject classifications.} 65M08, 76M45, 76M50
}

The sediment transport in flow is often modelled by a scalar
convective equation coupled with the shallow water equations.
Typically, the morphodynamic process by sediment transport is an
extremely slow process \cite{wilcock2009sediment,
cunge1980practical} in term of the flow velocity in the model, while
the changes in the topography are usually of practical interests.
Since interesting changes in the topography can only be produced by
the continual erosion of the flow for a long period of time, the model
is provided a scale separation in time.  In the coupled model of the
sediment transport equation and the shallow water equations, the
system of shallow water equations is of standard formation, with the
contribution from the riverbed elevation. In the past decades, the
spatial variation of the riverbed elevation has been extensively
investigated \cite{simons1992sediment}. The expressions of the
sediment tranport flow are usually proposed for granular non-cohesive
sediments and quantified empirically \cite{grass1981sediment,
meyer1948formulas, fernandez1976erosion, van1984sediment,
van1993principles, nielsen1992coastal, camenen2005general}. 


In this paper, we consider the following widely used system to model
the sediment transport in one dimensional case
\begin{equation} \label{equ:1d-sediment}
\left\{
\begin{aligned}
&\partial_t h + \partial_x (hu) = 0,  \\
&\partial_t (hu) + \partial_x (hu^2+\frac{1}{2}gh^2) = -gh\partial_x
B, \\
&\partial_t B + \xi \partial_x q_b = 0,
\end{aligned}
\right.
\end{equation}
where the first two equations are the shallow water equations, and the
last equation is the Exner equation \cite{exner1920zur,
exner1925uber} involving a sediment transport flux. In this system,
$h$ is the water depth, $u$ is the vertically averaged flow velocity
along the $x$ direction, and $B$ is the riverbed elevation.  
$q_b$ denotes the volumetric bedload sediment transport discharge.
Different from the standard shallow water equations, the riverbed
elevation $B$ depends on time $t$ as well. The parameters involved are
the gravity constant $g$ and $\xi = (1-\gamma)^{-1}$ with $\gamma$ the
porosity of sediment layer. Grass \cite{grass1981sediment} proposed
one of the simplest formulation of $q_b$ as 
\begin{equation} \label{equ:Grass}
q_b = A_g u |u|^{m-1} \qquad 1 \leq m \leq 4, A_g\in (0,1].
\end{equation}

In practice, estimates of the bedload transport rate are mainly based
on the modeling of bottom shear stress $\tau_b$ and a non-dimensional
parameters $q_b^*$ as 
$$ 
q_b = q_b^* \sqrt{(s - 1)gd_s^3}, 
$$ 
where $s$ is the density ratio between sediment $\rho_s$ and water
$\rho$, and $d_s$ is the median diameter of sediment. The
non-dimensional form of bottom shear stress, which is also called
Shields parameter, is defined as  
$$
\tau_b^* = \frac{\tau_b}{(s - 1)\rho g d_s}.
$$
A variety of models \cite{grass1981sediment, meyer1948formulas,
fernandez1976erosion, van1984sediment, van1993principles,
nielsen1992coastal, camenen2005general} have been often applied to
build the relationship between $q_b^*$ and Shields parameter. In
this paper, one of the most commonly-used models proposed by Meyer,
Peter and M\"{u}ler \cite{meyer1948formulas} is considered: 
\begin{equation} \label{equ:MPM}
q_b^* = 8(\tau_b^* - \tau_{cr}^*)_+^{3/2} := 
\begin{cases}
8(\tau_b^* - \tau_{cr}^*)^{3/2} & \text{if}~\tau_b^* > \tau_{cr}, \\ 
0 & \text{otherwise}.
\end{cases}
\end{equation}
Consider the Darcy-Weisbach formula for the bottom shear stress in the
context of laminar flows, we have 
$$ 
\tau_b = \rho gh S_f \qquad S_f = \frac{fu|u|}{8gh}, 
$$ 
where $f$ is the Darcy-Weisbach's coefficient. Consequently, the
original Meyer-Peter-M\"{u}ler model \eqref{equ:MPM} can be reduced to
the following expression, 
\begin{equation} \label{equ:MPM-Darcy-Weisbach}
q_b = 8\sqrt{(s-1)g d_s^3} \frac{u}{|u|} \left( \frac{f}{8(s-1)gd_s}|u|^2 -
    \tau_{cr}^*\right)^{3/2}.
\end{equation}

The Grass model \eqref{equ:Grass} and Meyer-Peter-M\"{u}ler model
\eqref{equ:MPM-Darcy-Weisbach} can be recast into a unified fashion as  
\begin{equation} \label{equ:unified-sed}
\xi q_b = \varepsilon u \tilde{q}_b(|u|),
\end{equation}
where 
\begin{itemize}
\item For Grass model \eqref{equ:Grass}:
\begin{equation} \label{equ:sed-Grass}
\varepsilon = \xi A_g \qquad \text{and} \qquad \tilde{q}_b(|u|) =
|u|^{m-1} \qquad 1\leq m \leq 4. 
\end{equation}
\item For Meyer-Peter-M\"uler model \eqref{equ:MPM-Darcy-Weisbach}:
\begin{equation} \label{equ:sed-MPM}
\varepsilon = \frac{\xi}{(s-1)g}\sqrt{\frac{f^3}{8}} \qquad \text{and}
\qquad \tilde{q}_b(|u|) = \frac{1}{|u|}(|u|^2 - u_{cr}^2)_+^{3/2}. 
\end{equation}
Here the critical velocity $u_{cr} = \sqrt{8(s-1)gd_s \tau_{cr}^*/f}$. We
note that this model is one of the commonly-used for rivers and
channels with slope lower than $2\%$, see \cite{soulsby1997dynamics}
for details.
\end{itemize}
For the case that the flow-sediment interaction is low, $\varepsilon$
is far less than the magnitude of typical flow velocity to depict the
scale separation in time. In this sense, we call the parameter
$\varepsilon$ in \eqref{equ:unified-sed} the {\em time scaling
parameter}. 

Similarly, the governing system in two dimensional case is formulated as
\begin{equation} \label{equ:2d-sediment}
  \dfrac{\partial}{\partial t}\begin{pmatrix}
    h \\ hu \\ hv \\ B
  \end{pmatrix} +
  \dfrac{\partial}{\partial x}\begin{pmatrix}
    hu \\ hu^2+\frac{1}{2}gh^2 \\ huv \\ \varepsilon u\tilde{q}_b(|\bu|)
  \end{pmatrix} + 
  \dfrac{\partial}{\partial y}\begin{pmatrix}
    hv \\ huv \\ hv^2+\frac{1}{2}gh^2 \\ \varepsilon v\tilde{q}_b(|\bu|)
  \end{pmatrix} = 
  \begin{pmatrix}
    0 \\ -ghB_x \\ -ghB_y \\ 0
  \end{pmatrix},
\end{equation}
where $\bu=(u,v)^T$ is the vertically averaged flow velocity along
the $x$ and $y$ direction.


The scale separation in time brings us serious difficulty in carrying
out numerical simulation for \eqref{equ:1d-sediment} or
\eqref{equ:2d-sediment}. Currently, there are two classifications of
the numerical methods for this problem: {\it coupled method} and {\it
decoupled method}. The decoupled method is suitable for the case that
the topography changes much slower than the flow, which results in a
quasi-steady water motions with respect to the topography. It was
pioneered by Cunge et al. \cite{cunge1980practical}, and has been
widely used in industry
\cite{simons1992sediment,wu2008computational,juez20142d}
on account of its high computational efficiency. There are, however,
two drawbacks associated with decoupled method, including the
instability when updating the riverbed with traditional scheme (e.g.
Lax-Wendroff scheme) \cite{hudson2001numerical, hudson2005numerical,
cordier2011bedload} and the low accuracy in terms of $\varepsilon$. 

With the purpose of overcoming the above drawbacks of the decoupled
method, much effort has been devoted to develop numerical schemes by
coupling the hydrodynamics and morphodynamics. 
The numerical techniques developed in this fold include the Roe-type
scheme \cite{hudson2001numerical, hudson2003formulations,
hudson2005high, di2009two, murillo2010exner, de2011duality,
serrano2012finite}, the second order LHLL method
\cite{fracarollo2003godunov},
the balanced finite volume WENO scheme \cite{crnjaric2004balanced}, 
the state reconstructions for non-conservation hyperbolic systems
\cite{castro2008sediment}, the relaxation approximation
\cite{delis2008relaxation}, the second order SRNHS scheme
\cite{benkhaldoun2009solution}, the WAF method
\cite{postacchini2012multi}, the McCormack scheme
\cite{briganti2012efficient}, etc. One of the main difficulties for
coupled method comes from the fact that the interaction between fluid
and sediment is usually too weak \cite{castro2008sediment}, and the
simulations have to carry out for a long time. Therefore, it is
necessary to use high order numerical schemes to depress the numerical
dissipation. At the same time, one has to suffer by the small time
step size restricted by the hydrodynamic time scale for the explicit
scheme. Recently, the linearized implicit method
\cite{bilanceri2012linearized} was proposed to enlarge the time step
size.

The multi-scale method we developed in this paper for the sediment
transport problem looks somewhat like the decoupled method.  In the
subcritical case, we first reformulate the sediment transport model to
a non-conservative scalar equation (or the zeroth order equation), which
preserves the hyperbolicity of the coupled system. We assume that the
morphodynamic process is governed by a limiting equation (or the
zeroth order equation) when the time scaling parameter
$\varepsilon$ tends to zero.  For one dimensional case, the resulting
limiting equation is exactly same as the result of De Vries
\cite{devries1973river}.  For two dimensional case, the limiting
equation can be derived subsequently as a reasonable extension of the
one dimensional case, with a difference in the source term.  When
developing the numerical method in the framework of Heterogeneous
Multiscale Method (HMM) \cite{weinan2003heterogeneous}, the steady
state solver for shallow water equations is adopted as the
micro-solver, and the macro-solver for the morphodynamic time scale
dynamics is applied.  Therefore, the first drawback of decoupled
method can be avoided to some extent by taking a stable scheme for
zeroth order model as the macro-solver when updating the riverbed
topography.

For practical problems that the time scaling parameter $\varepsilon$ is
not so extremely close to zero \cite{castro2008sediment}, a first
order correction have to be made to improve the order of accuracy for
$\varepsilon$. In other words, the model error will be
$\mathcal{O}(\varepsilon^2)$ after the first order correction, which
does alleviate the second drawback of decoupled method for the
low-interaction between flow and sediment ($\varepsilon \ll 1$).

The basic idea of the first order correction is as follows. With the
help of the zeroth order model, two correction terms are applied to
the steady state of shallow water equations: the dynamic term used to
balance the flux and source term, and the $\mathcal{O}(\varepsilon)$
term. In light of this correction for flow variables, the
characteristic speed of riverbed is corrected in one dimensional case,
and both the characteristic speed and the source term of riverbed are
corrected in two dimensional case. As for the numerical algorithm, an
interesting observation is that one can update the flow variables
while updating the riverbed topography in the macro time step of
morphodynamic time scale. This {\it fast variables correction}
improves not only the computing accuracy but also the stability of the
numerical multi-scale method.

The rest part of this paper is organized as follows. In section
\ref{sec:1d-linear}, we apply our multi-scale method to one
dimensional linear model to describe its key ingredients.  In section
\ref{sec:1d}, we give the homogenization of the model in one
dimensional case and in section \ref{sec:2d}, we give the two
dimensional model subsequently. In section \ref{sec:scheme}, we
develop the corresponding numerical method. The numerical results are
in section \ref{sec:num} and a short conclusion remarks in section
\ref{sec:conclusion} close the main text.

\section{One Dimensional Linear Model} \label{sec:1d-linear}
In order to clarify our method for nonlinear hyperbolic systems
\eqref{equ:1d-sediment} and \eqref{equ:2d-sediment}, we first 
present our basic idea by studying
the one dimensional linear hyperbolic system as follows
\begin{equation} \label{equ:linear}
  \left\{
    \begin{aligned}
      &\bU_t + \bA \bU_x = -\bg B_x, \\
      &B_t + \varepsilon \bc^T\bU_x = 0.
    \end{aligned}
  \right.
\end{equation}
where $\bU=(u_1,\cdots,u_l)^T\in\R^l$ are fast variables, $B\in\R$ is
slow variable, $\bg,\bc$ are two constant vectors in $\R^l$. 
The constant matrix $\bA \in\R^{l\times l}$ satisfies
\begin{equation} \label{equ:linear-A}
  \bA = \bX^{-1}\bLambda, \bX \qquad \bLambda = \text{diag}\{
    \lambda_1, \cdots, \lambda_l
  \}.
\end{equation}
With the purpose of scale separation on the characteristic speeds of
\eqref{equ:linear}, it requires $|\lambda_i|\gg\varepsilon$, thus
$\bA$ is invertible.  Moreover, we assume $\bA$ has only discrete
eigenvalues (i.e. the algebraic multiplicity is one).  We emphasize
that $t$ represents the fast time scale, and $\tau=\varepsilon t$
represents the slow time scale in the context of this paper.

Let
\begin{equation}\label{mat:C-Ceps0}
\bC = \begin{pmatrix} \bA & \bg \\ 0 & 0 \\ \end{pmatrix}, \qquad 
\bC^\varepsilon = \begin{pmatrix} \bA & \bg \\ \varepsilon \bc^T & 0 \\ 
\end{pmatrix}, \qquad \bV = 
\begin{pmatrix}
\bU \\ B
\end{pmatrix}.
\end{equation}
Then, the original system \eqref{equ:linear} can be recast as:
\[
\bV_t+\bC^\varepsilon \bV_x=0.
\]
It follows from \eqref{equ:linear-A} that $\bC=\bK^{-1}\bD\bK$, where
\[
  \bK=\begin{pmatrix} \bX & \bLambda^{-1}\bX \bg \\ 0 & 1 
  \end{pmatrix}, \qquad 
  \bD=\begin{pmatrix} \bLambda & 0\\0 & 0\end{pmatrix}.
\]
By the perturbation theory of discrete eigenvalues and eigenvectors in
\cite{wilkins1965eigenvalue}, 
  \begin{equation}\label{decom-0}
    \bC^\varepsilon = (\bK^\varepsilon)^{-1} 
     \bD^\varepsilon \bK^{\varepsilon},
  \end{equation}
where 
\[
 \bK^\varepsilon = 
 \begin{pmatrix} \bX+\varepsilon\hat{\bX} &
 \bLambda^{-1}\bX \bg+\varepsilon\hat{\balpha} \\
 \varepsilon\hat{\bbeta}^T & 1+\varepsilon\hat{\theta} \\
 \end{pmatrix}, \qquad
 \bD^\varepsilon=\begin{pmatrix} 
 \bLambda + \varepsilon \hat{\bLambda} & 0 \\ 0 & \varepsilon\mu\\ 
 \end{pmatrix}.
\]
\subsection{Zeroth order model}
The first step of our method is to predict the slow variable $B$.
Eliminating the spatial derivative terms of fast variables in
\eqref{equ:linear}, we have
\[
  B_t + \varepsilon\bc^T\bA^{-1}(-\bg B_x - \bU_t) = 0, 
\]
that is
\begin{equation} \label{equ:linear-0tau}
  B_\tau - \bc^T\bA^{-1}\bg B_x - \varepsilon \bc^T\bA^{-1} \bU_\tau =
  0,
\end{equation}
The {\it zeroth order model} (or limiting equation) for the linear
hyperbolic system \eqref{equ:linear} is derived by $\varepsilon\to0$,
namely
\begin{equation} \label{equ:linear-0}
  B^{(0)}_\tau + \lambda_B^{(0)} B^{(0)}_x = 0,
\end{equation}
where $\lambda_B^{(0)} = -\bc^T\bA^{-1}\bg$. We note that the zeroth
order model \eqref{equ:linear-0} is established only on the hypothesis
that $\bU_\tau$ is bounded. In particular, this hypothesis holds when
the fast dynamics has a steady state (up to $\cO(\varepsilon)$) with
fixed slow variable in the initial state.  At this point, the zeroth
order model \eqref{equ:linear-0} provides a prediction for $B$ on the
interval $[0,\Delta\tau]$ as
\[
  B^{(0)}(x,\tau) = B(x-\lambda_B^{(0)} \tau,0), \qquad 
  \tau \in[0, \Delta \tau].
\]
In light of the prediction, a modified fast dynamics $\bU^{(1)}$ can
be derived which is slightly different from that with fixed slow
variable. Our next step is to calculate the difference between the
steady variable $\bU^{(0)}(x,\tau)$ and $\bU^{(1)}(x,\tau)$ to get the
correction terms, and by this way the correction model for the
riverbed can be obtained. More precisely, we will show this process step by
step strictly for the linear system. First, we have the following
lemma for zeroth order model \eqref{equ:linear-0}: 

\begin{lemma}\label{prop:1-conv}
Suppose that $\bA\in \R^{l\times l}$ has only real and discrete
eigenvalues and all eigenvalues satisfy $|\lambda_i|>\delta \gg
\varepsilon$ for some positive $\delta$. The initial dynamics
satisfy 
\begin{enumerate}
\item $\bA\bU_x(x,0)+\bg B_x(x,0) = \cO(\varepsilon)$. 
\item $B^{(0)}(x,0) = B(x,0) \in W^{1,\infty}(\R)$.
\end{enumerate} 
Then 
\[
\|B(x,t) - B^{(0)}(x,t)\|_{\infty} = \cO(\varepsilon) \qquad
\text{for }~ t\sim \cO(\varepsilon^{-1}).
\]
Moreover, if $B(x,0) \in W^{2,\infty}(\mathbb{R})$ and $\bA\bU_{xx}(x,0)+\bg
B_{xx}(x,0)=\cO(\varepsilon)$, then 
\[
\|B_x(x,t) - B^{(0)}_x(x,t)\|_{\infty} = \cO(\varepsilon) \qquad 
\text{for }~ t\sim \cO(\varepsilon^{-1}).
\]
\end{lemma}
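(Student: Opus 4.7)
The plan is to reduce the estimate on $B - B^{(0)}$ to a uniform bound on $\bU_t$ via a Duhamel argument, and then to exploit the characteristic decomposition \eqref{decom-0} to establish that bound. From the computation leading to \eqref{equ:linear-0tau}, rewritten in the $t$ variable, one has
\[
B_t + \varepsilon \lambda_B^{(0)} B_x = \varepsilon \bc^T \bA^{-1} \bU_t.
\]
Subtracting the zeroth-order equation $B^{(0)}_t + \varepsilon \lambda_B^{(0)} B^{(0)}_x = 0$ and integrating the resulting scalar transport equation along its characteristic (with vanishing initial error) yields
\[
|(B - B^{(0)})(x, t)| \leq \varepsilon \, \|\bc^T \bA^{-1}\| \int_0^t \|\bU_t(\cdot, s)\|_\infty\, ds,
\]
so if $\|\bU_t(\cdot, s)\|_\infty = \cO(\varepsilon)$ uniformly in $s$, the right-hand side is $\cO(\varepsilon^2 t) = \cO(\varepsilon)$ on the time scale $t \sim \cO(\varepsilon^{-1})$.

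To obtain the uniform bound on $\bU_t = -(\bA \bU_x + \bg B_x)$, I would form the augmented variable $\bV = (\bU^T, B)^T$, so that \eqref{equ:linear} reads $\bV_t + \bC^\varepsilon \bV_x = 0$, and apply the diagonalization \eqref{decom-0}: under $\bW = \bK^\varepsilon \bV$ the system decouples as $\bW_t + \bD^\varepsilon \bW_x = 0$. Differentiating in $x$, each component of $\bW_x$ obeys a scalar transport equation and its $L^\infty$ norm is preserved in time. A direct computation using the leading-order inverse $\bK^{-1}$ shows that $\bA \bU_x + \bg B_x$ equals $\bX^{-1} \bLambda$ applied to the fast block of $\bW_x$, up to an $\cO(\varepsilon)\|\bV_x\|_\infty$ correction coming from the $\cO(\varepsilon)$ perturbation of $(\bK^\varepsilon)^{-1}$. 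Hypothesis (1) is precisely the statement that the fast block of $\bW_x$ is $\cO(\varepsilon)$ at $t = 0$; transport preserves this for all time. In addition, $\|\bV_x(\cdot, t)\|_\infty$ is uniformly bounded, since $\bV_x(\cdot, 0)$ is bounded (by hypothesis (2) and by hypothesis (1) through $\bU_x(\cdot,0) = -\bA^{-1} \bg B_x(\cdot,0) + \cO(\varepsilon)$), and transport of $\bW_x$ together with the uniform operator-norm bounds on $\bK^\varepsilon$ and $(\bK^\varepsilon)^{-1}$ propagates this bound in time. Combining these two facts gives $\|\bU_t(\cdot, t)\|_\infty = \cO(\varepsilon)$ uniformly, closing the first estimate.

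The moreover statement on $B_x$ is obtained by applying the same argument to the differentiated vector $\bV_x = (\bU_x^T, B_x)^T$, which also satisfies $\bV_t + \bC^\varepsilon \bV_x = 0$. The strengthened hypotheses $B \in W^{2, \infty}(\R)$ and $\bA \bU_{xx}(\cdot, 0) + \bg B_{xx}(\cdot, 0) = \cO(\varepsilon)$ are exactly the analogues of (1) and (2) for this differentiated system. Differentiating \eqref{equ:linear-0} in $x$ shows that $B^{(0)}_x$ satisfies the same zeroth-order equation with initial data $B_x(\cdot, 0)$, hence coincides with the zeroth-order approximation of $B_x$, and the first part applied to the differentiated system yields $\|B_x - B^{(0)}_x\|_\infty = \cO(\varepsilon)$ on $t \sim \cO(\varepsilon^{-1})$.

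The main obstacle is the uniform $\cO(\varepsilon)$ control of $\bU_t$. A naive Duhamel estimate directly on $\bA \bU_x + \bg B_x$ produces a source term proportional to $\varepsilon \bg \bc^T \bU_{xx}$, which would accumulate to $\cO(1)$ over the long time $\cO(\varepsilon^{-1})$ unless $\bU_{xx}$ is itself uniformly controlled, triggering an unending chain of higher-derivative estimates. The perturbed diagonalization \eqref{decom-0} sidesteps this completely, since each characteristic variable is transported along a single speed and its sup norm is exactly conserved; one only needs to verify that the subspace identification of the fast block of $\bW_x$ with $\bA \bU_x + \bg B_x$ is accurate to $\cO(\varepsilon)$. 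This is where the spectral gap $|\lambda_i| > \delta \gg \varepsilon$ and the simplicity of the eigenvalues of $\bA$ genuinely enter the argument, via the Wilkinson-style perturbation bounds from \cite{wilkins1965eigenvalue}.
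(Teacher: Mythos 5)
Your proposal is correct and follows essentially the same route as the paper: both reduce the estimate to showing $\|\bA\bU_x+\bg B_x\|_\infty=\cO(\varepsilon)$ uniformly in time by propagating hypothesis (1) along the characteristics of the perturbed diagonalization \eqref{decom-0}, and then integrate the error equation $E_\tau+\lambda_B^{(0)}E_x=\varepsilon\bc^T\bA^{-1}\bU_\tau$ along its characteristic. The ``moreover'' part is likewise handled in the paper exactly as you describe, by differentiating the transported identity once more in $x$.
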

\begin{proof}
    See Appendix \ref{proof:1-conv}.
\end{proof}
\subsection{First order model}
Now we consider the case in which $\varepsilon$ is small but does not
tend to zero. The basic idea here is to improve the accuracy of the
weakly
coupled term $-\varepsilon \bc^T \bA^{-1}\bU_\tau$ by zeroth order
model \eqref{equ:linear-0}. More specifically, we first substitute $B$
in \eqref{equ:linear} by the solution of the zeroth order model, i.e.
\begin{equation}\label{equ:U1}
\bU^{(1)}_t + \bA \bU^{(1)}_x = -\bg B^{(0)}_x(x,t).
\end{equation}
Then, the corrected equation of $B$ can be derived by
\eqref{equ:linear-0tau} and \eqref{equ:U1} that 
\begin{equation}\label{equ:B1-hat}
  \hat{B}^{(1)}_\tau -\bc^T\bA^{-1}\bg
  \hat{B}^{(1)}_x-\varepsilon\bc^T\bA^{-1}\bU^{(1)}_\tau=0.
\end{equation}
The following lemma indicates that the error between the solution of
\eqref{equ:B1-hat} and original equations \eqref{equ:linear} is of
$\cO(\varepsilon^2)$ order.

\begin{lemma}\label{lemma:2-conv-1}
Under the same assumptions of Lemma \ref{prop:1-conv}, and assume the
initial dynamics satisfy
\begin{enumerate}
\item $(\bA+\varepsilon \bX^{-1}\bLambda\hat{\bX})\bU_x(x,0) +
(\bg+\varepsilon\bX^{-1}\bLambda\hat{\alpha})B_x(x,0) =\cO(\varepsilon^2)$.
\item $\bA\bU_{xx}(x,0)+\bg B_{xx}(x,0) =\cO(\varepsilon)$.

\item $(\bA + \varepsilon \bA)\bU_x^{(1)}(x,0) + (\bg + \varepsilon
    \bg - \varepsilon\bA^{-1}\bg\bc^T\bA^{-1}\bg)B_x^{(0)}(x,0) =
\cO(\varepsilon^2)$. 
\item $B^{(0)}(x,0) = \hat{B}^{(1)}(x,0) = B(x,0) \in
W^{2,\infty}(\R)$.
\end{enumerate}
Then 
\[
\lVert B(x,t)-\hat{B}^{(1)}(x,t)\rVert_\infty=\cO(\varepsilon^2) \qquad
\text{for }~ t\sim \cO(\varepsilon^{-1}).
\]
\end{lemma}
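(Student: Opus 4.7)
The plan is to set up a linear transport equation for the error $e^{(1)} := B - \hat{B}^{(1)}$, close it by the method of characteristics, with the core work being to show that the fast-variable time derivatives of $\bU$ and $\bU^{(1)}$ agree to order $\cO(\varepsilon^2)$. Rewriting \eqref{equ:linear-0tau} as $B_\tau + \lambda_B^{(0)}B_x = \varepsilon\bc^T\bA^{-1}\bU_\tau$ and subtracting \eqref{equ:B1-hat} gives
\begin{equation*}
  e^{(1)}_\tau + \lambda_B^{(0)} e^{(1)}_x = \varepsilon\,\bc^T\bA^{-1}\bigl(\bU_\tau - \bU^{(1)}_\tau\bigr),
\end{equation*}
with $e^{(1)}(x,0)=0$ by assumption~(4). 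Integration along the constant-speed characteristic $\lambda_B^{(0)}$ yields
\begin{equation*}
  \|e^{(1)}(\cdot,t)\|_\infty \le \varepsilon\int_0^t \|\bc^T\bA^{-1}(\bU_\tau-\bU^{(1)}_\tau)(\cdot,s)\|_\infty\,\rd s,
\end{equation*}
so the target $\cO(\varepsilon^2)$ bound on $[0,\cO(\varepsilon^{-1})]$ reduces to proving $\bU_t-\bU^{(1)}_t=\cO(\varepsilon^2)$ in $L^\infty$ (equivalently $\bU_\tau-\bU^{(1)}_\tau=\cO(\varepsilon)$), uniformly for $t\sim\cO(\varepsilon^{-1})$.

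For this refined estimate I would invoke the exact diagonalisation \eqref{decom-0}: setting $\bW=\bK^\varepsilon\bV$ decouples $\bV_t+\bC^\varepsilon\bV_x=0$ into $l$ scalar transport equations with speeds $\lambda_i+\varepsilon\hat\lambda_i$ and a slow one of speed $\varepsilon\mu=\varepsilon\lambda_B^{(0)}$. Assumption~(1) is engineered so that left-multiplying $\bV_x(x,0)$ by each of the fast rows of $\bK^\varepsilon$ returns $\cO(\varepsilon^2)$: the matrix $\bA+\varepsilon\bX^{-1}\bLambda\hat{\bX}$ and vector $\bg+\varepsilon\bX^{-1}\bLambda\hat{\balpha}$ appearing there are precisely the first-order-corrected fast-characteristic combinations of $\bU_x$ and $B_x$. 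Since each fast characteristic obeys pure linear transport, its spatial derivative stays $\cO(\varepsilon^2)$ in $L^\infty$ for all later times, and reassembling $\bU_t=-\bA\bU_x-\bg B_x$ through $(\bK^\varepsilon)^{-1}$ then yields $\bU_t=\cO(\varepsilon^2)$ uniformly in $t$. Assumption~(2) supplies the second-derivative regularity that lets one apply the derivative form of Lemma~\ref{prop:1-conv} whenever $x$-derivatives of $B-B^{(0)}$ are invoked in passing.

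The same strategy applies to $\bU^{(1)}$: the pair $(\bU^{(1)},B^{(0)})$ satisfies a linear hyperbolic system with coefficient matrix
\begin{equation*}
  \tilde{\bC}^\varepsilon=\begin{pmatrix}\bA & \bg \\ 0 & \varepsilon\lambda_B^{(0)}\end{pmatrix},
\end{equation*}
whose eigenvalues and eigenvectors agree with those of $\bC^\varepsilon$ to first order in $\varepsilon$, so an analogous diagonalisation applies. Assumption~(3) is the first-order quasi-equilibrium condition for $\tilde{\bC}^\varepsilon$: it forces the spatial derivatives of the fast characteristics to start at $\cO(\varepsilon^2)$ and hence remain so for all $t$, giving $\bU^{(1)}_t=\cO(\varepsilon^2)$. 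Subtracting then gives $\bU_t-\bU^{(1)}_t=\cO(\varepsilon^2)$ which, plugged into the Duhamel bound above, closes the proof. The main technical obstacle is the algebraic identification of the explicit correction coefficients in assumptions~(1) and~(3) with the first-order blocks $\hat{\bX}$, $\hat{\balpha}$ produced by Wilkinson's perturbation decomposition of $\bC^\varepsilon$ (and the analogous blocks for $\tilde{\bC}^\varepsilon$); once this matching is nailed down the transport-along-characteristics argument is routine, but keeping track of signs and of the contributions of $\hat{\bLambda}$ is delicate.
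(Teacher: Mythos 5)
Your overall skeleton matches the paper's: subtract \eqref{equ:B1-hat} from \eqref{equ:linear-0tau}, integrate the error along the characteristic of speed $\lambda_B^{(0)}$, and reduce the lemma to $\lVert \bU_\tau-\bU^{(1)}_\tau\rVert_\infty=\cO(\varepsilon)$, which you then attack via the perturbed diagonalisations of $\bC^\varepsilon$ and of the block-triangular matrix governing $(\bU^{(1)},B^{(0)})$. However, the central step of your argument is false: neither $\bU_t$ nor $\bU^{(1)}_t$ is $\cO(\varepsilon^2)$; each is only $\cO(\varepsilon)$. Assumption (1) makes the \emph{corrected} fast characteristic combinations $\bigl[(\bX+\varepsilon\hat{\bX})\bU_x+(\bLambda^{-1}\bX\bg+\varepsilon\hat{\balpha})B_x\bigr]_k$ equal to $\cO(\varepsilon^2)$ initially, and transport preserves that; but when you ``reassemble'' $\bU_t=-(\bA\bU_x+\bg B_x)$ you must strip off the $\varepsilon(\hat{\bX}\bU_x+\hat{\balpha}B_x)$ correction, which is genuinely $\cO(\varepsilon)$ since $\bU_x,B_x=\cO(1)$. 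Physically this is forced: $B$ evolves at rate $\cO(\varepsilon)$ in $t$ and $\bU$ tracks it adiabatically, so $\bU_t\sim\cO(\varepsilon)$ generically — indeed the paper's own Lemma \ref{prop:1-conv} proves exactly $\lVert\bA\bU_x+\bg B_x\rVert_\infty=\cO(\varepsilon)$, i.e.\ $\lVert\bU_\tau\rVert_\infty=\cO(1)$, not better.

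What is actually needed, and what the paper does, is to estimate the \emph{difference} and exhibit a cancellation between the two $\cO(\varepsilon)$ residuals: one writes
\[
\bX(\bU-\bU^{(1)})_x+\bLambda^{-1}\bX\bg(B-B^{(0)})_x
= -\varepsilon(\hat{\bX}\bU_x+\hat{\balpha}B_x)
+\varepsilon(\hat{\bX}_1\bU^{(1)}_x+\hat{\balpha}_1B^{(0)}_x)+\cO(\varepsilon^2),
\]
and then shows the right-hand side is $\cO(\varepsilon^2)$ by combining (a) $\lVert B_x-B^{(0)}_x\rVert_\infty=\cO(\varepsilon)$ from the second part of Lemma \ref{prop:1-conv} — this is where assumption (2) is used, not merely ``in passing''; (b) $\lVert\bX\bU^{(1)}_x+\bLambda^{-1}\bX\bg B^{(0)}_x\rVert_\infty=\cO(\varepsilon)$; and (c) the algebraic identities \eqref{equ:perb-0} relating $\hat{\bX},\hat{\balpha},\hat{\bLambda}$ (and their analogues $\hat{\bX}_1=\bX$, $\hat{\balpha}_1$), under which the coefficient of $B_x$ collapses to $\cO(\varepsilon)$. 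Your proposal never identifies this cancellation — it is replaced by the incorrect separate bounds — so the Duhamel step would only deliver $\lVert B-\hat{B}^{(1)}\rVert_\infty=\cO(\varepsilon)$, not $\cO(\varepsilon^2)$. The ``delicate bookkeeping of $\hat{\bLambda}$'' you flag at the end is precisely this missing cancellation, and it is the substance of the proof rather than a routine verification.
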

\begin{proof}
    See Appendix \ref{proof:2-conv-1}.
\end{proof}

\begin{remark}
It is expected that if we apply the above steps repeatedly (i.e. 
substitute $B^{(0)}$ in \eqref{equ:U1} by the solution of
\eqref{equ:B1-hat}, and solve \eqref{equ:B1-hat} where $U^{(1)}$
substituted by the solution of the new equation), the results of
better accuracy will be achieved. In nonlinear cases, however, it may
be of some difficulties to get the correction terms (which will be
introduced later) of more than second order accuracy, so we only
consider the model up to second order accuracy, i.e. the first order
model.
\end{remark}

The equation \eqref{equ:B1-hat} does not provide a
convenient way to compute $\hat{B}^{(1)}$ due to the appearance of
$\bU^{(1)}_\tau$. Let us keep in mind that our aim is to deduce an
equation which only has the riverbed as the variable (just like
\eqref{equ:linear-0}).  To this end, one needs to represent
$\bU^{(1)}_\tau$ by the riverbed up to the accuracy of order
$\cO(\varepsilon^2)$.

Suppose $\bU^{(0)}$ is the steady state when $B$ is fixed to
$B^{(0)}(x,0)$, namely  $\bU^{(0)}_\tau=0$. This implies that
$(\bU^{(1)}-\bU^{(0)})_\tau=\bU^{(1)}_\tau$, and thereafter we
consider $\bvarphi=\bU^{(1)}-\bU^{(0)}$ other than $\bU^{(1)}$. 
This trick is also used for the nonlinear systems: instead of solving
the coupled equations to get $\bU^{(1)}_\tau$, we use the steady
state $\bU^{(0)}$ and calculate the difference $\bvarphi$ to
obtain $\bU^{(1)}$. Now, we derive the closed form of $\bvarphi$.

First, we have the following equations:
\[
  \begin{aligned}
    \varepsilon \bU^{(0)}_{\tau} + \bA \bU^{(0)}_x &= -\bg
    B^{(0)}_x(x, 0), \\
    \varepsilon \bU^{(1)}_{\tau} + \bA \bU^{(1)}_x &= -
    \bg B^{(0)}_x(x-\lambda_B^{(0)}\tau, 0).
  \end{aligned}
\]
Taking the difference of the above dynamics and applying the
characteristic decomposition of $\bA$, we have 
\begin{equation}\label{equ:linear-varphi0}
\bX\bvarphi_\tau + \frac{\bLambda}{\varepsilon}\bX
\bvarphi_x = -\frac{\bX\bg}{\varepsilon}
\big(B^{(0)}_x(x-\lambda^{(0)}_B\tau,0)- B^{(0)}_x(x,0)\big).
\end{equation}
The initial condition of $\bvarphi$ can be derived from Lemma
\ref{lemma:2-conv-1} and steady state of $\bU^{(0)}$, i.e. 
\begin{equation}\label{equ:varphi-init}
\bX\bvarphi_x(x,0)=
\varepsilon\bLambda^{-2}\bX\bg\bc^T\bA^{-1}\bg
B_x^{(0)}(x,0)+\cO(\varepsilon^2).
\end{equation}
Then, \eqref{equ:linear-varphi0} can be solved analytically by the
method of characteristics as
\[
  \begin{aligned}
  & (\bX\bvarphi)_k(x, \tau) \\ 
  =&(\bX\bvarphi)_k(x-\frac{\lambda_k}{\varepsilon}\tau,0)\\
  &-\frac{(\bX\bg)_k}{\varepsilon} \int_0^{\tau}
  B^{(0)}_x\big( x-\frac{\lambda_k}{\varepsilon}\tau +
  (\frac{\lambda_k}{\varepsilon}-\lambda_B^{(0)})s, 0\big) -
  B^{(0)}_x\big( x-\frac{\lambda_k}{\varepsilon}\tau +
  \frac{\lambda_k}{\varepsilon}s, 0 \big) \rd s \\
  =& -\frac{(\bX\bg)_k}{(\lambda_k-\varepsilon\lambda_B^{(0)})} 
  B^{(0)}(x-\lambda_B^{(0)}\tau,0) +
  \frac{(\bX\bg)_k}{\lambda_k}B^{(0)}(x, 0) \\
 &+(\bX\bvarphi)_k(x-\frac{\lambda_k}{\varepsilon}\tau,0) 
 +\frac{\varepsilon(\bX\bg)_k\lambda_B^{(0)}}{\lambda_k(\lambda_k-\varepsilon \lambda_B^{(0)})}
  B^{(0)}(x-\frac{\lambda_k}{\varepsilon}\tau, 0).
  \end{aligned}
\]
That is, 
  \begin{flalign}
    (\bX\bvarphi)_k(x,\tau) = &-
    \frac{\varepsilon(\bX\bg)_k\lambda_B^{(0)}}{\lambda_k(\lambda_k-\varepsilon\lambda_B^{(0)})}
    B^{(0)}(x, 0) & \cdots\cdots ~ 
    \cO(\varepsilon)~\text{term} \nonumber\\ 
    &-\frac{(\bX\bg)_k}{(\lambda_k-\varepsilon\lambda_B^{(0)})}
    \big(B^{(0)}(x-\lambda_B^{(0)}\tau, 0) - B^{(0)}(x, 0)\big)  &
    \cdots\cdots ~ \cO(\tau)~\text{term} \label{equ:linear-varphi}\\
    &+(\bX\bvarphi)_k(x-\frac{\lambda_k}{\varepsilon}\tau,0)
    &\cdots\cdots ~ \text{high
    order term}\nonumber\\
    &+\frac{\varepsilon(\bX\bg)_k\lambda_B^{(0)}}{\lambda_k(\lambda_k-\varepsilon \lambda_B^{(0)})}
    B^{(0)}(x-\frac{\lambda_k}{\varepsilon}\tau, 0). 
    & \cdots\cdots ~ \text{high order term} \nonumber
  \end{flalign}
We will show later that the last two terms provide
$\cO(\varepsilon^2)$ term in the error estimation. This is  
why they are called {\it high order terms}. In the nonlinear system,
the correction term will be shown to have a similar form.

Denote $\be_k (k=1,\cdots, l)$ the unit vectors of $\R^l$ and
$\diag(\bx)$ the diagonal matrix with vector $\bx$ in its diagonal
entries. By the decomposition \eqref{equ:linear-varphi} and initial
condition \eqref{equ:varphi-init}, we have  
\[
\begin{aligned}
  \bvarphi_\tau(x,\tau) = &~ \lambda_B^{(0)}\bX^{-1}(\bLambda-
    \varepsilon\lambda_B^{(0)}\bI)^{-1}\bX\bg
    B_x^{(0)}(x-\lambda_B^{(0)}\tau,0)\\
    &~-\frac{1}{\varepsilon}\bX^{-1}\bLambda
    \left(\sum_{k=1}^l(\bX\bvarphi_x)_k(x-\frac{\lambda_k}{\varepsilon}\tau,0)\be_k\right)\\
  &-\lambda_B^{(0)}\bX^{-1}(\bLambda-\varepsilon
    \lambda^{(0)}_B\bI)^{-1}\diag(\bX\bg)
    \left(\sum_{k=1}^l B^{(0)}_x(x-\frac{\lambda_k}{\varepsilon}\tau,0)
    \be_k \right) \\
  =&~ \lambda_B^{(0)}\bX^{-1}\bLambda^{-1}\bX\bg
     B_x^{(0)}(x-\lambda_B^{(0)}\tau,0)\\
     &~-\bX^{-1}\bLambda^{-1}\diag(\bX\bg\bc^T\bA^{-1}\bg)
     \left(\sum_{k=1}^l B^{(0)}_x(x-\frac{\lambda_k}{\varepsilon}\tau,0)\be_k\right)\\
     &~-\lambda_B^{(0)}\bX^{-1}\bLambda^{-1}
    \diag(\bX\bg)\left(\sum_{k=1}^l B^{(0)}_x(x-\frac{\lambda_k}{\varepsilon}\tau,0)
    \be_k \right) + \cO(\varepsilon).
\end{aligned}
\]
Define 
\[
\tilde{\bc}^T = \bc^T \bA^{-1}\bX^{-1}\bLambda^{-1}
 \diag\left(\lambda_B^{(0)}\bX\bg +\bX\bg\bc^T\bA^{-1}\bg
 \right) \qquad \tilde{c}_k = \bc^T \be_k.
\]
Note that $\bU^{(1)}_\tau=\bvarphi_\tau$ , the model
\eqref{equ:B1-hat} can be rewritten as
\begin{equation}\label{equ:order1-approx1}
\begin{aligned}
\hat{B}^{(1)}_\tau - \bc^T\bA^{-1}\bg \hat{B}^{(1)}_x = &~
\varepsilon\lambda_B^{(0)}\bc^T\bA^{-2}\bg
B_x^{(0)}(x-\lambda_B^{(0)}\tau,0)\\
 &-\varepsilon \sum_{k=1}^l\tilde{c}_k
 B^{(0)}_x(x-\frac{\lambda_k}{\varepsilon}\tau,0)  +
 \cO(\varepsilon^2).
\end{aligned}
\end{equation}

By discarding the last two terms in \eqref{equ:order1-approx1} and
denote its solution by $\tilde{B}^{(1)}$, we have 
\begin{equation}\label{equ:order1-approx2}
    \tilde{B}^{(1)}_\tau-\bc^T\bA^{-1}\bg
    \tilde{B}^{(1)}_x=\varepsilon\lambda_B^{(0)}\bc^T\bA^{-2}\bg
     B_x^{(0)}(x-\lambda_B^{(0)}\tau,0).
\end{equation}
Now, we show that $\lVert \hat{B}^{(1)}-\tilde{B}^{(1)}\rVert_\infty=
\cO(\varepsilon^2)$ for $\tau\sim\cO(1)$. Taking the difference of
\eqref{equ:order1-approx2} and \eqref{equ:B1-hat}. Let $\hat{E} =
\hat{B}^{(1)}-\tilde{B}^{(1)}$, since \eqref{equ:B1-hat} can be
rewritten as \eqref{equ:order1-approx1}, we have
\[
\hat{E}_\tau+\lambda_B^{(0)}\hat{E}_x
= \varepsilon \sum_{k=1}^l \tilde{c}_k
B_x^{(0)}(x-\frac{\lambda_k}{\varepsilon}\tau,0) +
\cO(\varepsilon^2),
\]
with initial condition $\hat{E}(x,0)=0$.  By the method of
characteristics again, 
\[
\begin{aligned}
\hat{E}(x, \tau) &= \varepsilon \sum_{k=1}^l \tilde{c}_k 
\int_0^\tau
B_x^{(0)}(x-\lambda_B^{(0)}\tau+(\lambda_B^{(0)} -
  \frac{\lambda_k}{\varepsilon})s,0) \rd s + \cO(\varepsilon^2)\\
& = \sum_{k=1}^l
\frac{\varepsilon^2}{\varepsilon\lambda_B^{(0)}-\lambda_k} \left(
    B^{(0)}(x-\frac{\lambda_k}{\varepsilon}\tau,0)-B^{(0)}(x-\lambda_B^{(0)}\tau,0)
    \right) + \cO(\varepsilon^2) = \cO(\varepsilon^2).
\end{aligned}
\]
Hence, $\tilde{B}^{(1)}$ is proven to be a $\cO(\varepsilon^2)$ order
approximation to $\hat{B}^{(1)}$, and thus the $\cO(\varepsilon^2)$
order approximation to original solution $B$ by Lemma
\ref{lemma:2-conv-1}.  The final formula of {\it first order model}
can be acquired by modifying \eqref{equ:order1-approx2} as
\[
B^{(1)}_\tau-\bc^T\bA^{-1}\bg
B^{(1)}_x=\varepsilon\lambda^{(0)}_B\bc^T\bA^{-2} \bg
B^{(1)}_x(x,\tau).
\]
Or, 
\begin{equation} \label{equ:linear-1}
  B^{(1)}_\tau + \lambda_B^{(1)} B^{(1)}_x = 0,
\end{equation}
where $\lambda_B^{(1)} = \lambda_B^{(0)} - \varepsilon \lambda_B^{(0)}
\bc^T\bA^{-2}\bg$.  

It is straightforward to prove that $B^{(1)}$ is a
$\cO(\varepsilon^2)$ order approximation to $\tilde{B}^{(1)}$ and thus
a $\cO(\varepsilon^2)$ order approximation to original solution $B$,
which is precisely the following theorem:

\begin{theorem}\label{prop:2-conv}
Assume the initial dynamics satisfies 
\begin{enumerate}
\item $(\bA+\varepsilon \bX^{-1}\bLambda\hat{\bX})\bU_x(x,0) +
(\bg+\varepsilon\bX^{-1}\bLambda\hat{\alpha})B_x(x,0) =\cO(\varepsilon^2)$.
\item $\bA\bU_{xx}(x,0) + \bg B_{xx}(x,0) = \cO(\varepsilon)$.
\item $B^{(1)}(x,0) = B(x,0) \in W^{2,\infty}(\R)$.
\end{enumerate}
Then 
\[
\|B(x,t) - B^{(1)}(x,t)\|_{\infty} = \cO(\varepsilon^2) \qquad
\text{for}~ t\sim \cO(\varepsilon^{-1}).
\]
\end{theorem}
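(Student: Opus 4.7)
The plan is to reduce the estimate to what the preceding discussion already does, by splitting via the triangle inequality
\[
\|B - B^{(1)}\|_\infty \le \|B - \hat{B}^{(1)}\|_\infty + \|\hat{B}^{(1)} - \tilde{B}^{(1)}\|_\infty + \|\tilde{B}^{(1)} - B^{(1)}\|_\infty.
\]
The first term is $\cO(\varepsilon^2)$ directly from Lemma \ref{lemma:2-conv-1}, since the hypotheses of Theorem \ref{prop:2-conv} are inherited (the auxiliary compatibility conditions on $\bU^{(1)}$ and $B^{(0)}$ in Lemma \ref{lemma:2-conv-1} follow from the well-preparedness of the initial data and the definition of $\bU^{(1)}$ through $B^{(0)}$). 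The second term is $\cO(\varepsilon^2)$ by the explicit characteristics computation carried out just above the statement, so only the third term remains to be controlled.

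For the third term, I would set $D := B^{(1)} - \tilde{B}^{(1)}$ and subtract \eqref{equ:order1-approx2} from the rewritten form of \eqref{equ:linear-1},
\[
B^{(1)}_\tau - \bc^T\bA^{-1}\bg\, B^{(1)}_x = \varepsilon \lambda_B^{(0)}\bc^T\bA^{-2}\bg\, B^{(1)}_x(x,\tau),
\]
which uses $\lambda_B^{(0)} = -\bc^T\bA^{-1}\bg$. The difference then satisfies the scalar transport equation
\[
D_\tau + \lambda_B^{(0)} D_x = \varepsilon \lambda_B^{(0)}\bc^T\bA^{-2}\bg \bigl( B^{(1)}_x(x,\tau) - B^{(0)}_x(x - \lambda_B^{(0)}\tau, 0)\bigr),
\]
with $D(x,0)=0$. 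Because $B^{(0)}$ and $B^{(1)}$ are pure transports of the same initial datum $B(\cdot,0)$ with speeds $\lambda_B^{(0)}$ and $\lambda_B^{(1)} = \lambda_B^{(0)} - \varepsilon \lambda_B^{(0)} \bc^T\bA^{-2}\bg$ respectively, one has $B^{(1)}_x(x,\tau) = B_x(x - \lambda_B^{(1)}\tau, 0)$ and $B^{(0)}_x(x-\lambda_B^{(0)}\tau,0) = B_x(x-\lambda_B^{(0)}\tau,0)$. A one-line mean value estimate using $B(\cdot,0)\in W^{2,\infty}(\R)$ then gives
\[
\bigl| B^{(1)}_x(x,\tau) - B^{(0)}_x(x-\lambda_B^{(0)}\tau,0)\bigr| \le \|B_{xx}(\cdot,0)\|_\infty\, |\lambda_B^{(1)} - \lambda_B^{(0)}|\, \tau = \cO(\varepsilon\tau).
\]

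With $\tau = \varepsilon t \sim \cO(1)$, the forcing on the right-hand side of the equation for $D$ is therefore $\cO(\varepsilon^2)$ uniformly in $x$. The method of characteristics then integrates this forcing over an $\cO(1)$ interval in $\tau$ starting from $D(\cdot,0)=0$, yielding $\|D\|_\infty = \cO(\varepsilon^2)$ for $t\sim\cO(\varepsilon^{-1})$. Combining the three pieces proves the theorem.

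The main technical point, rather than any single hard estimate, is organizing the chain $B \to \hat{B}^{(1)} \to \tilde{B}^{(1)} \to B^{(1)}$ so that each link is $\cO(\varepsilon^2)$ over the long time horizon $t \sim \varepsilon^{-1}$. The only step genuinely new to this proof is the final one, where one must be careful that the $\cO(\varepsilon)$ shift between the two transport speeds affects $B_x$, not $B$ itself—hence the $W^{2,\infty}$ regularity assumption on the initial riverbed, which is exactly what hypothesis (3) provides.
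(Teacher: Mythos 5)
Your proposal is correct and follows exactly the route the paper takes: the chain $B \to \hat{B}^{(1)} \to \tilde{B}^{(1)} \to B^{(1)}$, with the first link given by Lemma \ref{lemma:2-conv-1} and the second by the explicit $\hat{E}$ characteristics computation preceding the theorem; the paper merely asserts the last link is ``straightforward,'' and your transport estimate for $D = B^{(1)} - \tilde{B}^{(1)}$, using the $\cO(\varepsilon\tau)$ shift between the two characteristic speeds together with $B(\cdot,0)\in W^{2,\infty}$, supplies that missing step correctly. The only caveat, which the paper shares, is that hypothesis (3) of Lemma \ref{lemma:2-conv-1} (the compatibility condition on $\bU^{(1)}(x,0)$ and $B^{(0)}(x,0)$) is not literally among the theorem's hypotheses and must be taken as part of the construction of the auxiliary problem, as you note.
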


In nonlinear cases, $\lambda_B^{(0)}$ and $\lambda_B^{(1)}$ are
functions of the steady state other than constants. Therefore, the
steady state as well as the correction term $\bvarphi$ should be
computed at every step when solving zeroth order model or the first
order model. 
Although we actually do not have to use $\bvarphi$ in linear cases (we
only need to use $\bU^{(1)}_\tau$), the correction term $\bvarphi$
is essential in nonlinear cases due to two reasons: (i) it is needed
to give the high order algorithm for solving first order model; (ii)
it will be used to give the prediction of fast variables at next step,
which could improve the efficiency of computing the steady state.

\section{One Dimensional Sediment Transport Model}\label{sec:1d}

Now we consider the one dimensional sediment transport model
\eqref{equ:1d-sediment}. The process here is quite similar with the
linear case. First, we derive the formulation of riverbed equation
through the original coupled system. Then, the zeroth order model is
obtained by taking $\varepsilon\rightarrow0$. After that, the
correction term is considered and first order model will then be
derived.  

\subsection{Zeroth order model} \label{subsec:1d-0} First,
we reformulate the sediment transport systems
\eqref{equ:1d-sediment} with primitive variables as
\begin{equation} \label{equ:1d-equation} 
  \left\{
    \begin{aligned}
      &\begin{pmatrix}
        h \\ u 
      \end{pmatrix}_t + 
      \begin{pmatrix}
        u & h \\
        g & u  \\
      \end{pmatrix}
      \begin{pmatrix}
        h \\ u 
      \end{pmatrix}_x = \begin{pmatrix} 
        0 \\ -gB_x 
      \end{pmatrix}, \\
      &B_t + \varepsilon \big(\tilde{q}_b(|u|)+|u|\tilde{q}_b'(|u|)\big) 
      u_x = 0. 
    \end{aligned}
  \right.
\end{equation}
When the flow is subcritical, namely $|u|<\sqrt{gh}$ everywhere, the
fast dynamics in \eqref{equ:1d-equation} deduces that 
\[
  \left\{
  \begin{aligned}
    h_x &= \frac{1}{u^2-gh} (-uh_t + hu_t + ghB_x), \\
    u_x &= \frac{1}{u^2-gh} (gh_t - uu_t - guB_x).
  \end{aligned}
  \right.
\]
Eliminating the spatial derivative terms of fast variables in the
sediment transport equation, we have
\begin{equation} \label{equ:1d-0tau}
  B_\tau - \frac{gu\tilde{\lambda}_b(|u|)}{u^2-gh}B_x + \varepsilon
  \frac{\tilde{\lambda}_b(|u|)}{u^2-gh}(gh_\tau - uu_\tau) = 0,
\end{equation}
where $\tilde{\lambda}_b(|u|) = \tilde{q}_b(|u|) +
|u|\tilde{q}_b'(|u|)$. The {\it zeroth order model} (or the
{\it limiting equation}) for the 1D sediment transport systems
\eqref{equ:1d-sediment} is derived by taking $\varepsilon \to 0$ in
\eqref{equ:1d-0tau} as
\begin{equation} \label{equ:1d-0}
  B^{(0)}_\tau + \lambda_B^{(0)}(h^{(0)},u^{(0)}) B^{(0)}_x = 0,
\end{equation}
where $u^{(0)},h^{(0)}$ are the steady states with fixed riverbed, and
$\lambda_B^{(0)}(h,u) = -\dfrac{gu\tilde{\lambda}_b(|u|)}{u^2-gh}$. It
should be noted that $h^{(0)}$ and $u^{(0)}$ are the functions of
riverbed $B^{(0)}$.  Unlike the linear hyperbolic system, the
characteristic speed of \eqref{equ:1d-0} depends on the fast
variables. Similar to the discussion in section \ref{sec:1d-linear},
$h_\tau$ and $u_\tau$ are assumed to be bounded so that the last term
in \eqref{equ:1d-0tau} tends to zero when $\varepsilon\to0$. Usually,
the steady state of flow exists with fixed riverbed when appropriately
applying the boundary condition, which implies that $h_\tau$ and
$u_\tau$ are bounded for the sediment transport.

\begin{remark}
From the characteristic speed of riverbed in \eqref{equ:1d-0tau}, 
we know that $|\lambda_B^{(0)}|\rightarrow\infty$ if $|u|\rightarrow
\sqrt{gh}$, which means that our model can only handle the case
in which $|u|$ stays away from $\sqrt{gh}$ everywhere. This can be
guaranteed for the subcritical case $|u| < \sqrt{gh}$. 
\end{remark}

\subsection{First order model} \label{subsec:1d-1}
As with linear case, the most essential step in deriving the
correction model is to compare the fast dynamics with fixed riverbed
to the modified one with riverbed moving according to zeroth order
model. Let $\tau_0$ be the base time with the riverbed $B^{(0)}(x,
\tau_0)$. At time $\tau = \tau_0 + \tilde{\tau}$, 
\begin{equation} \label{equ:1d-diff}
  \begin{aligned}
  \begin{pmatrix}
    h^{(0)} \\ u^{(0)}
  \end{pmatrix}_\tau + \frac{1}{\varepsilon}
  \begin{pmatrix}
    u^{(0)} & h^{(0)} \\ g & u^{(0)}
  \end{pmatrix}
  \begin{pmatrix}
    h^{(0)} \\ u^{(0)}
  \end{pmatrix}_x &= 
  \begin{pmatrix}
    0 \\ -\dfrac{g}{\varepsilon}B^{(0)}_x(x,\tau_0)
  \end{pmatrix}, \\
  \begin{pmatrix}
    h^{(1)} \\ u^{(1)}
  \end{pmatrix}_\tau + \frac{1}{\varepsilon}
  \begin{pmatrix}
    u^{(1)} & h^{(1)} \\ g & u^{(1)}
  \end{pmatrix}
  \begin{pmatrix}
    h^{(1)} \\ u^{(1)}
  \end{pmatrix}_x &= 
  \begin{pmatrix}
    0 \\ -\dfrac{g}{\varepsilon}B^{(0)}_x(x,\tau_0 + \tilde{\tau})
  \end{pmatrix}. 
\end{aligned}
\end{equation}
Thereafter, we often consider the case in which $\tau_0 = 0$ to make
it more concise. Let
$$
\varphi_h(x,\tilde{\tau}) = h^{(1)}(x,\tilde{\tau})-h^{(0)}(x),
\qquad \varphi_u(x,\tilde{\tau}) = u^{(1)}(x,\tilde{\tau})-u^{(0)}(x)
$$ be
the correction of fast variables. Similar to
\eqref{equ:linear-varphi}, we intend to decompose $\varphi_h$ and
$\varphi_u$ into the sum of $\cO(\varepsilon)$ term,
$\cO(\tilde{\tau})$ term and high order term, namely 
\begin{equation} \label{equ:1d-correction}
  \begin{aligned}
    \varphi_h(x, \tilde{\tau}) &= \varepsilon \varphi_h^{(0)}(x) +
    \varphi_h^{(1)}(x,\tilde{\tau}) + \text{high order term}, \\
    \varphi_u(x, \tilde{\tau}) &= \varepsilon \varphi_u^{(0)}(x) +
    \varphi_u^{(1)}(x,\tilde{\tau}) + \text{high order term}.
  \end{aligned}
\end{equation}

\subsubsection{$\cO(\tilde{\tau})$ Term}
Let $\tilde{B}^{(0)} = B^{(0)}(x, \tilde{\tau})$. Taking the
difference of the two dynamics in \eqref{equ:1d-diff} to obtain
\[
  \left\{
    \begin{aligned}
    &(\varphi_h)_\tau + \frac{1}{\varepsilon}
    (h^{(1)}u^{(1)} - h^{(0)}u^{(0)})_x = 0, \\
    &(\varphi_u)_\tau + \frac{1}{\varepsilon} 
    \left[ gh^{(1)} + \frac{1}{2}(u^{(1)})^2 + g\tilde{B}^{(0)}
    - gh^{(0)}-\frac{1}{2}(u^{(0)})^2-gB^{(0)} \right]_x = 0.
    \end{aligned}
  \right.
\]
Then, eliminating $h^{(1)}, u^{(1)}$ and neglecting the high order term
to obtain the linearized equation as
\begin{equation} \label{equ:1d-linearization}
  \left\{
    \begin{aligned}
    &(\varphi_h)_\tau + \frac{1}{\varepsilon}
    (h^{(0)}\varphi_u + u^{(0)}\varphi_h)_x \approx 0, \\
    &(\varphi_u)_\tau + \frac{1}{\varepsilon} 
    (g\varphi_h + u^{(0)}\varphi_u +g\tilde{B}^{(0)}-gB^{(0)})_x
    \approx 0. 
    \end{aligned}
  \right.
\end{equation}
Collecting the $\cO(1/\varepsilon)$ terms and assuming that  
$\varphi_h^{(1)},\varphi_u^{(1)}$ are zero when $|x|\to \infty$, we
have 
\begin{equation}\label{equ:1d-varphi1-equ}
    \left\{
    \begin{aligned}
        &h^{(0)}\varphi_u^{(1)} + u^{(0)}\varphi_h^{(1)}=0,\\
        &g\varphi_h^{(1)}+u^{(0)}\varphi_u^{(1)}+g\tilde{B}^{(0)}-gB^{(0)}=0.
    \end{aligned}
    \right.
\end{equation}
Namely, 
\begin{equation} \label{equ:1d-varphi1}
  \varphi_h^{(1)} = \frac{gh^{(0)}}{(u^{(0)})^2-gh^{(0)}}(\tilde{B}^{(0)} -
  B^{(0)}), \qquad 
  \varphi_u^{(1)} = -\frac{gu^{(0)}}{(u^{(0)})^2-gh^{(0)}}(\tilde{B}^{(0)} -
  B^{0}).
\end{equation}
\subsubsection{$\cO(\varepsilon)$ Term}
Consider the remaining terms in $\varphi_h,\varphi_u$ besides the
$\cO(\tilde{\tau})$ terms.
Taking $\varphi_h = \varphi_h^{(1)} + \varepsilon \varphi_h^{(0)}$ and
$\varphi_u = \varphi_u^{(1)} + \varepsilon \varphi_u^{(0)}$ into
\eqref{equ:1d-linearization}, we have
\[
  \left\{
  \begin{aligned}
    (u^{(0)}\varphi_h^{(0)} + h^{(0)}\varphi_u^{(0)})_x &= 
    \frac{gh^{(0)}\lambda_B^{(0)}(h^{(0)},u^{(0)})}{(u^{(0)})^2-gh^{(0)}}B^{(0)}_x -
    \varepsilon (\varphi_h^{(0)})_\tau, \\
    (g\varphi_h^{(0)} + u^{(0)}\varphi_u^{(0)})_x &= - 
    \frac{gu^{(0)}\lambda_B^{(0)}(h^{(0)},u^{(0)})}{(u^{(0)})^2-gh^{(0)}}B^{(0)}_x
    -\varepsilon (\varphi_u^{(0)})_\tau.
  \end{aligned}
  \right.
\]
Collecting the $\cO(1)$ term, we obtain 
\begin{equation} \label{equ:1d-varphi0}
  \left\{
  \begin{aligned}
    (u^{(0)}\varphi_h^{(0)} + h^{(0)}\varphi_u^{(0)})_x &= 
    \frac{gh^{(0)}\lambda_B^{(0)}(h^{(0)},u^{(0)})}{(u^{(0)})^2-gh^{(0)}}B^{(0)}_x, \\ 
    (g\varphi_h^{(0)} + u^{(0)}\varphi_u^{(0)})_x &= - 
    \frac{gu^{(0)}\lambda_B^{(0)}(h^{(0)},u^{(0)})}{(u^{(0)})^2-gh^{(0)}}B^{(0)}_x. 
  \end{aligned}
  \right.
\end{equation}
Notice that \eqref{equ:1d-varphi0} is a linear system for
$\varphi_h^{(0)}$ and $\varphi_u^{(0)}$, which is convenient to be
numerically solved, see Subsection \ref{sec:scheme-correction}.

\subsubsection{Slow variable correction}
Having the $\cO(\tilde{\tau})$ and $\cO(\varepsilon)$ terms,
\eqref{equ:1d-0tau} can be reformulated as
\[
  \hat{B}^{(1)}_\tau -
  \frac{gu^{(1)}\tilde{\lambda}_b(|u^{(1)}|)}{(u^{(1)})^2-gu^{(1)}}
  \hat{B}^{(1)}_x + \varepsilon
  \frac{g((u^{(1)})^2+gh^{(1)})\tilde{\lambda}_b(|u^{(1)}|)} 
  {((u^{(1)})^2-gh^{(1)})^2} \tilde{B}^{(0)}_\tau = 0,
\]
where $h^{(1)} = h^{(0)} + \varepsilon \varphi_h^{(0)} +
\varphi_h^{(1)}, u^{(1)} = u^{(0)} + \varepsilon \varphi_u^{(0)} +
\varphi_u^{(1)}$.
Similarly, the last term is regarded as a correction term on the
characteristic speed of slow variable. Let $\tilde{\tau} \rightarrow
0$, we derive the model with {\it first order correction} for the 1D
sediment transport systems \eqref{equ:1d-equation} as
\begin{equation} \label{equ:1d-1}
  B^{(1)}_\tau + \lambda_B^{(1)}(h^{(0)}+\varepsilon \varphi_h^{(0)},
      u^{(0)}+\varepsilon \varphi_u^{(0)})B^{(1)}_x = 0,
\end{equation}
where
\[ 
  \lambda_B^{(1)}(h,u) = \lambda_B^{(0)}(h,u)-\varepsilon\lambda_B^{(0)}(h,u) 
  \frac{g(u^2+gh)\tilde{\lambda}_b(|u|)}{(u^2-gh)^2}.
\]

\section{Two Dimensional Sediment Transport Model} \label{sec:2d} 

We move to the modelling of the two dimensional hyperbolic system
governing sediment transport. We will see later that there is an
essential difference between one dimensional model and two dimensional
model. In two dimensional case, the model cannot be obtained by
eliminating the spatial derivative terms of fast variables. Instead,
the two dimensional model is a convection equation with source term,
while is consistent with the one dimensional model.

\subsection{Zeroth order model}
Let
\[
\nabla\cdot\bu = u_x+v_y, \quad  
\bu\cdot \nabla\bu = (uu_x+vu_y, uv_x+vv_y)^T, 
\]
then  
\[
\begin{aligned}
  (u\tilde{q}_b(|\bu|))_x + (v\tilde{q}_b(|\bu|))_y &= \tilde{q}_b u_x
  + \frac{\tilde{q}_b'}{|\bu|}(u^2u_x+uvv_x)\\
  &\quad + \tilde{q}_b v_y +\frac{\tilde{q}_b'}{|\bu|}(vuu_y+v^2v_y) \\
  &= \tilde{q}_b(\nabla\cdot\bu) + \frac{\tilde{q}_b'}{|\bu|}\bu^T
  (\bu\cdot\nabla\bu).
\end{aligned}
\]
We reformulate the 2D sediment transport system
\eqref{equ:2d-sediment} with primitive variables $\bW=(h,\bu,B)^T$ as%

\begin{equation} \label{equ:2d-equation}
  \left\{
    \begin{aligned}
      &h_t + h\nabla\cdot\bu + \bu^T\nabla h = 0, \\
      &\bu_t + \bu\cdot\nabla\bu + g\nabla h = -g\nabla B, \\ 
      &B_t + \varepsilon \big[ \tilde{q}_b(|\bu|)(\nabla\cdot\bu)
      + \frac{\tilde{q}_b'(|\bu|)}{|\bu|}\bu^T (\bu\cdot\nabla\bu) 
    \big] = 0. 
    \end{aligned}
  \right.
\end{equation}
By the equations of mass and momentum conservation in
\eqref{equ:2d-equation}, we have
\[
\begin{aligned}
  gh_t + gh\nabla\cdot\bu + g\bu^T\nabla h &=0, \\
  \frac{1}{2}(|\bu|^2)_t + \bu^T(\bu\cdot\nabla\bu) + g\bu^T\nabla h
  &= -g\bu^T\nabla B.
\end{aligned}
\]
Eliminating the spatial derivative of $h$ to obtain
\begin{equation} \label{equ:2d-du}
  \bu^T(\bu\cdot\nabla\bu) - gh\nabla\cdot\bu = -\bu^T\bu_t + gh_t -
  g\bu^T\nabla B.
\end{equation}
Notice that the spatial derivative of $\bu$ can not be solved from
\eqref{equ:2d-du}. Therefore, we introduce a rotational invariant
operator $\cL^{\bS}$ as
\begin{equation} \label{equ:2d-L}
  \cL^{\bS} \bu \triangleq \bu^T(\bu\cdot\nabla\bu) - |\bu|^2
  (\nabla\cdot\bu),
\end{equation}
which degenerates to null operator for 1D case $(v=0)$.
Then, the spatial derivative of $\bu$ from \eqref{equ:2d-du} and
\eqref{equ:2d-L} are represented as
\begin{equation} \label{equ:2d-duL}
  \left\{
    \begin{aligned}
      &\nabla\cdot\bu = \frac{-\bu^T\bu_t + gh_t}{|\bu|^2-gh} 
      - \frac{g\bu^T\nabla B}{|\bu|^2-gh} 
      - \frac{\cL^\bS \bu}{|\bu|^2-gh}, \\
      &\bu^T(\bu\cdot\nabla\bu) = 
      \frac{-\bu^T\bu_t + gh_t}{|\bu|^2-gh}|\bu|^2 
      - \frac{g\bu^T\nabla B}{|\bu|^2-gh}|\bu|^2 
      - \frac{gh\cL^\bS \bu}{|\bu|^2-gh}.
    \end{aligned}
  \right.
\end{equation}

Substituting \eqref{equ:2d-duL} into \eqref{equ:2d-equation}, we have
\begin{equation} \label{equ:2d-0tau}
  B_\tau - \frac{g\tilde{\lambda}_b(|\bu|)\bu^T}{|\bu|^2-gh} \nabla B
   + \varepsilon
  \frac{\tilde{\lambda}_b(|\bu|)}{|\bu|^2-gh}  (gh_\tau
  -\bu^T\bu_\tau) =
  \frac{\tilde{q}_b+gh\frac{\tilde{q}_b'}{|\bu|}}{|\bu|^2-gh}\cL^\bS \bu,
\end{equation}
where $\tilde{\lambda}_b(|\bu|) = \tilde{q}_b(|\bu|) +
|\bu|\tilde{q}_b'(|\bu|)$. Similar to 1D case, the {\it zeroth order
model} for the 2D sediment transport system \eqref{equ:2d-equation}
can be derived by taking $\varepsilon \to 0$, 
\begin{equation} \label{equ:2d-0}
  B^{(0)}_\tau + \blambda_B^{(0)}(h^{(0)},\bu^{(0)}) \nabla B^{(0)} =
  S_B^{(0)}(h^{(0)},\bu^{(0)}),
\end{equation}
where 
\begin{equation}\label{equ:2d-lambda-S}
  \blambda_B^{(0)}(h^{(0)},\bu^{(0)}) =
  -\frac{g\tilde{\lambda}_b(|\bu^{(0)}|)\bu^{(0)T}}{|\bu^{(0)}|^2-gh^{(0)}},
  \qquad S_B^{(0)}(h^{(0)},\bu^{(0)}) =
  \frac{\tilde{q}_b+gh^{(0)}\frac{\tilde{q}_b'}{|\bu^{(0)}|}}{|\bu^{(0)}|^2-gh^{(0)}}\cL^\bS
  \bu^{(0)},
\end{equation}
and $h^{(0)},\bu^{(0)}$ are steady states with respect to $B^{(0)}$.
Roughly speaking, \eqref{equ:2d-0} is not a convective equation with
source term, since $S_B^{(0)}$ in \eqref{equ:2d-0} involves the spatial
derivative of fast variables. From the comparison of \eqref{equ:1d-0}
and \eqref{equ:2d-0}, we find that the zeroth order model of 2D case
is consistent with the model of 1D case, namely, \eqref{equ:2d-0}
degenerates to \eqref{equ:1d-0} if $v=0$ and $B_y=0$. Therefore, one
may expect that \eqref{equ:2d-0} has captured the leading order part
of the characteristic speed for sediment transport.

\subsection{First order correction}

Similar to the 1D case, we first calculate the difference between the
fast dynamics with fixed slow variable and the modified one with
predicted slow variable. Using the similar notation with 1D case, we
have

\begin{equation} \label{equ:chap7-2d-diff}
  \begin{aligned}
    & \left\{ 
    \begin{aligned}
      h^{(0)}_\tau + \frac{1}{\varepsilon} [h^{(0)}\nabla\cdot\bu^{(0)} +
      \bu^{(0),T}\nabla h^{(0)}]
      &= 0, \\
      \bu^{(0)}_\tau + \frac{1}{\varepsilon}
      [\bu^{(0)}\cdot\nabla\bu^{(0)} + g\nabla h^{(0)}] 
      &= -\frac{1}{\varepsilon}g\nabla B^{(0)},
    \end{aligned}
    \right.\\
    & \left\{ 
    \begin{aligned}
      h^{(1)}_\tau + \frac{1}{\varepsilon} [h^{(1)}\nabla 
        \cdot\bu^{(1)} + \bu^{(1),T} \nabla h^{(1)}] &= 0, \\
        \bu^{(1)}_\tau + \frac{1}{\varepsilon} [\bu^{(1)}\cdot 
        \nabla\bu^{(1)} + g\nabla h^{(1)}] 
        &= -\frac{1}{\varepsilon}g\nabla \tilde{B}^{(0)}.
    \end{aligned}
    \right.
  \end{aligned}
\end{equation}
Let  
$$
\varphi_h = h^{(1)}(\bx,\tilde{\tau}) - h^{(0)}(\bx) \qquad
\bvarphi_\bu = \bu^{(1)}(\bx,\tilde{\tau}) - \bu^{(0)}(\bx). 
$$ 
The following linearized equation for $\varphi_h$ and $\varphi_\bu$
can be derived by dropping off the high order term
\begin{equation} \label{equ:2d-linearization}
  \left\{
    \begin{aligned}
      &(\varphi_h)_\tau + \frac{1}{\varepsilon} \nabla\cdot
      (h^{(0)}\bvarphi_\bu + \bu^{(0)}\varphi_h) \approx 0, \\
      &(\bvarphi_\bu)_\tau + \frac{1}{\varepsilon} [ 
      \bvarphi_\bu\cdot\nabla\bu^{(0)} + \bu^{(0)}\cdot\nabla\bvarphi_\bu 
    + \nabla(g\varphi_h + g(\tilde{B}^{(0)}-B^{(0)}))] \approx \bzero.
    \end{aligned}
  \right.
\end{equation}
Similar to 1D case, we intend to decompose $\varphi_h$ and
$\bvarphi_\bu$ into the sum of $\cO(\varepsilon)$ term,
$\cO(\tilde{\tau})$ term and high order term, i.e.
\begin{equation} \label{equ:2d-fast}
  \begin{aligned}
    \varphi_h &= \varepsilon \varphi_h^{(0)}(\bx) + 
    \varphi_h^{(1)}(\bx, \tilde{\tau}) + \text{high order term}, \\
    \bvarphi_\bu &= \varepsilon \bvarphi_\bu^{(0)}(\bx) + 
    \bvarphi_\bu^{(1)}(\bx, \tilde{\tau}) + \text{high order term}.
  \end{aligned}
\end{equation}

\subsubsection{$\cO(\tilde{\tau})$ Term}
We first try to find the $\cO(\tilde{\tau})$ term analytically, which
is used to depict the change of the fast variables with the evolving
of slow variable. In comparison to the 1D case, the mass conservation
in \eqref{equ:2d-linearization} simply tells us that
$h^{(0)}\bvarphi_\bu+\bu^{(0)}\varphi_h$ is divergence free when
neglecting the time derivative. However, the term
$\bvarphi_\bu\cdot\nabla\bu^{(0)} + \bu^{(0)}\cdot\nabla\bvarphi_\bu$
in the momentum conservative equation can not be formulated to a total
derivative . Therefore, we first define
\begin{equation} \label{equ:2d-Lu}
  \cL^\bu(\bvarphi_\bu) = \bvarphi_\bu\cdot\nabla\bu + 
  \bu\cdot\nabla\bvarphi_\bu - \nabla(\bu^T\bvarphi_\bu).
\end{equation}
It is obvious that $\cL^\bu$ is a linear operator and degenerates to
null operator for 1D case ($\varphi_v=0$ or $v=0$). Hereafter, we
devote to analytically matching the flux and source term in
\eqref{equ:2d-linearization} with $\bar{\varphi}_h^{(1)}$ and
$\bar{\bvarphi}_\bu^{(1)}$, namely
\[
  h^{(0)}\bar{\bvarphi}_\bu^{(1)} + \bu^{(0)} \bar{\varphi}_h^{(1)} = \bzero,
  \qquad \bu^{(0),T}\bar{\bvarphi}_\bu^{(1)} + g\bar{\varphi}_h^{(1)} +
  g(\tilde{B}^{(0)} - B^{(0)}) = 0,
\]
which yields
\begin{equation} \label{equ:2d-varphi1-1}
  \bar{\varphi}_h^{(1)} = \frac{gh^{(0)}}{|\bu^{(0)}|^2-gh^{(0)}}(\tilde{B}^{(0)}-B^{(0)}), \quad 
  \bar{\bvarphi}_\bu^{(1)} =
  -\frac{g\bu^{(0)}}{|\bu^{(0)}|^2-gh^{(0)}}(\tilde{B}^{(0)}-B^{(0)}).
\end{equation}
Let $\hat{\varphi}_h^{(1)} = \varphi_h^{(1)} - \bar{\varphi}_h^{(1)}$
and $\hat{\bvarphi}_\bu^{(1)} = \bvarphi_\bu^{(1)} -
\bar{\bvarphi}_\bu^{(1)}$ be the rest parts of $\cO(\tilde{\tau})$
term. Notice that the $\cO(\tilde{\tau})$ term is used to balance
\eqref{equ:2d-linearization} without time evolving terms, thus the
equation of $\hat{\varphi}_h^{(1)}$ and $\hat{\bvarphi}_\bu^{(1)}$
can be proposed as
\begin{equation} \label{equ:2d-varphi1-2}
  \left\{
    \begin{aligned}
      &\nabla \cdot (h^{(0)}\hat{\bvarphi}_\bu^{(1)} +
      \bu^{(0)}\hat{\varphi}_h^{(1)}) = 0, \\
      &\cL^{\bu^{(0)}}(\hat{\bvarphi}_\bu^{(1)}) + \nabla \cdot
      (g\hat{\varphi}_h^{(1)} + \bu^{(0),T}\hat{\bvarphi}_\bu^{(1)}) =
      -\cL^{\bu^{(0)}}(\bar{\bvarphi}_\bu^{(1)}).
    \end{aligned}
  \right.
\end{equation}

Therefore, the $\cO(\tilde{\tau})$ term is composed of the
analytical terms from \eqref{equ:2d-varphi1-1} and the other terms
from \eqref{equ:2d-varphi1-2}. We also note that the
$\cO(\tilde{\tau})$ term of 2D case is consistent with that of 1D case
by the degeneration of $\cL^\bu$ in 1D case. 

\subsubsection{$\cO(\varepsilon)$ Term}

The equations of $\varphi_h^{(0)}$ and $\bvarphi_\bu^{(0)}$ are
deduced by taking the $\cO(\tilde{\tau})$ term into linearized
equation \eqref{equ:2d-linearization} as well as neglecting the high
order term, namely 
\begin{equation} \label{equ:2d-varphi0-equ}
  \left\{
  \begin{aligned}
    &\nabla\cdot(h^{(0)}\bvarphi_\bu^{(0)} + \bu^{(0)}\varphi_h^{(0)}) = 
     - (\bar{\varphi}_h^{(1)})_\tau - (\hat{\varphi}_h^{(1)})_\tau -
     \varepsilon(\varphi_h^{(0)})_\tau, \\
    &\cL^{\bu^{(0)}}(\bvarphi_\bu^{(0)}) + \nabla
    (\bu^{(0),T}\bvarphi_\bu^{(0)} + g\varphi_h^{(0)}) =
    -(\bar{\bvarphi}_\bu^{(1)})_\tau - (\hat{\bvarphi}_\bu^{(1)})_\tau
    -\varepsilon(\bvarphi_\bu^{(0)})_\tau.
  \end{aligned}
  \right.
\end{equation}

The time derivatives of $\bar{\varphi}_h^{(1)}$ and
$\bar{\bvarphi}_\bu^{(1)}$ can be calculated by
\eqref{equ:2d-varphi1-1} as
\begin{equation} \label{equ:2d-varphi0-bar}
  (\bar{\varphi}_h^{(1)})_\tau =
  \frac{gh^{(0)}}{|\bu^{(0)}|^2-gh^{(0)}}\tilde{B}^{(0)}_\tau,
  \qquad (\bar{\bvarphi}_\bu^{(1)})_\tau =
  -\frac{g\bu^{(0)}}{|\bu^{(0)}|^2-gh^{(0)}}
  \tilde{B}^{(0)}_\tau.
\end{equation}
For the time derivatives of $\hat{\varphi}_{h}^{(1)}$ and
$\hat{\bvarphi}_{\bu}^{(1)}$, we have 
\begin{equation} \label{equ:2d-varphi0-hat}
  \left\{
    \begin{aligned}
      &\nabla \cdot \big(h^{(0)} (\hat{\bvarphi}_\bu^{(1)})_\tau +
      \bu^{(0)} (\hat{\varphi}_h^{(1)})_\tau \big) = 0, \\
      &\cL^{\bu^{(0)}} \left( (\hat{\bvarphi}_\bu^{(1)})_\tau \right)
      + \nabla \big( g(\hat{\varphi}_h^{(1)})_\tau + \bu^{(0),T}
      (\hat{\bvarphi}_\bu^{(1)})_\tau \big) =
      \cL^{\bu^{(0)}}\left(
       \frac{g\bu^{(0)}}{|\bu^{(0)}|^2-gh^{(0)}}\tilde{B}^{(0)}_\tau 
       \right),
    \end{aligned}
  \right.
\end{equation}
by taking a time derivative on \eqref{equ:2d-varphi1-2}. Denote
$\hat{\varphi}_h^{(0)} = \left.  (\hat{\varphi}_h^{(1)})_\tau
\right|_{\tilde{\tau}=0}$ and $\hat{\bvarphi}_\bu^{(0)} = \left.
(\hat{\bvarphi}_\bu)^{(1)})_\tau \right|_{\tilde{\tau}=0}$, and it is
easy to check that  
\[
  \left. \tilde{B}^{(0)}_\tau \right|_{\tilde{\tau}=0} = -\blambda_B^{(0)}
  \nabla B^{(0)} + S_B^{(0)}.
\]
Taking $\tilde{\tau}\rightarrow 0$ and collecting the $\cO(1)$ term in
\eqref{equ:2d-varphi0-equ}, the $\cO(\varepsilon)$ term satisfies 
\begin{equation} \label{equ:2d-varphi0}
  \left\{
  \begin{aligned}
    &\nabla\cdot(h^{(0)}\bvarphi_\bu^{(0)} + \bu^{(0)}\varphi_h^{(0)}) = 
    \frac{gh^{(0)}}{|\bu^{(0)}|^2-gh^{(0)}}(\blambda_B^{(0)} \nabla B^{(0)} - S_B^{(0)}) -
    \hat{\varphi}_h^{(0)}, \\
    &\cL^{\bu^{(0)}}(\bvarphi_\bu^{(0)}) + \nabla
    (\bu^{(0),T}\bvarphi_\bu^{(0)} + g\varphi_h^{(0)}) =
    \frac{g\bu^{(0)}}{|\bu^{(0)}|^2-gh^{(0)}}(-\blambda_B^{(0)} \nabla B^{(0)} +
    S_B^{(0)}) -
    \hat{\bvarphi}_\bu^{(0)},
  \end{aligned}
  \right.
\end{equation}
where $\hat{\varphi}_h^{(0)}$ and $\hat{\bvarphi}_\bu^{(0)}$ satisfy
\begin{equation} \label{equ:2d-varphi0-hat0}
  \left\{
    \begin{aligned}
      &\nabla \cdot (h^{(0)} \hat{\bvarphi}_\bu^{(0)} + \bu^{(0)}
      \hat{\varphi}_h^{(0)}) = 0, \\
      &\cL^{\bu^{(0)}}(\hat{\bvarphi}_\bu^{(0)}) + \nabla (g
      \hat{\varphi}_h^{(0)} + \bu^{(0),T} \hat{\bvarphi}_\bu^{(0)}) =
      \cL^{\bu^{(0)}} \left(
      \frac{g\bu^{(0)}}{|\bu^{(0)}|^2-gh^{(0)}}(-\blambda_B^{(0)} \nabla B^{(0)} +
      S_B^{(0)}) \right).
    \end{aligned}
  \right.
\end{equation}

We also note that the equations \eqref{equ:2d-varphi1-2},
\eqref{equ:2d-varphi0} and \eqref{equ:2d-varphi0-hat0} share the
similar form, which is able to be numerically solved, see Subsection
\ref{sec:scheme-correction}.

\subsubsection{Slow variable correction}
From \eqref{equ:2d-varphi1-1}, \eqref{equ:2d-varphi1-2} and
\eqref{equ:2d-varphi0}, the corrected fast variables in
\eqref{equ:2d-fast} can be obtained. Then, we substitute the fast
variables correction into \eqref{equ:2d-0tau} and omit the high order
term to have 
\[
  \begin{aligned}
    \hat{B}^{(1)}_\tau & + \blambda_B^{(0)}(h^{(1)}, \bu^{(1)})\nabla \hat{B}^{(1)} \\ 
    & + \varepsilon \frac{g\tilde{\lambda}_b(|\bu^{(1)}|)
      (|\bu^{(1)}|^2+gh^{(1)})} {(|\bu^{(1)}|^2-gh^{(1)})^2} \left[
          -\blambda_B^{(0)}(h^{(1)},\bu^{(1)}) \nabla \hat{B}^{(1)}
          +S_B^{(0)}(h^{(1)},\bu^{(1)}) \right] \\ 
    & + \varepsilon
      \frac{\tilde{\lambda}_b(|\bu^{(1)}|)} {|\bu^{(1)}|^2-gh^{(1)}}
      (h^{(1)}\hat{\varphi}_h^{(0)} - \bu^{(1),T}
       \hat{\bvarphi}_\bu^{(0)}) = S_B^{(0)}(h^{(1)}, \bu^{(1)}).
  \end{aligned}
\]

Let $\tilde{\tau} \rightarrow0$, the {\it first order
correction model} for the 2D sediment transport system
\eqref{equ:2d-sediment} can be derived as
\begin{equation} \label{equ:2d-1}
  \begin{aligned}
    B^{(1)}_\tau &+ [\blambda_B^{(1)}(h^{(0)}+\varepsilon
        \varphi_h^{(0)}, \bu^{(0)}+\varepsilon \bvarphi_\bu^{(0)})
    \nabla B^{(1)} \\
    &= S_B^{(1)} (h^{(0)}+\varepsilon \varphi_h^{(0)},
    \bu^{(0)}+\varepsilon \bvarphi_\bu^{(0)}),
  \end{aligned}
\end{equation}
where
\begin{equation}\label{equ:2d-corr-lambda-S}
  \begin{aligned}
    \blambda_B^{(1)}(h,\bu) &=\blambda_B^{(0)}(h,\bu) -\varepsilon\blambda_B^{(0)}(h,\bu)\frac{g(|\bu|^2+gh)
    \tilde{\lambda}_b(|\bu|)}{(|\bu|^2-gh)^2}, \\
    S_B^{(1)}(h,\bu) &= S_B^{(0)}(h,\bu)-\varepsilon\bigg[S_B^{(0)}(h,\bu)\frac{g(|\bu|^2+gh)
  \tilde{\lambda}_b(|\bu|)}{(|\bu|^2-gh)^2} +
  \frac{(h\hat{\varphi}_h^{(0)} - \bu^T\hat{\bvarphi}_\bu^{(0)})
  \tilde{\lambda}_b(|\bu|)}{|\bu|^2-gh}\bigg]. 
  \end{aligned}
\end{equation}

\section{Numerical Scheme} \label{sec:scheme}

In this section, we develop the numerical scheme to solve the sediment
transport using the models introduced in the previous sections. Our
numerical scheme basically falls into the framework of HMM method
\cite{weinan2003heterogeneous}, which contains a micro-scale solver,
namely the steady state solver, and a macro-scale solver, namely the
riverbed solver. Meanwhile, our scheme also contains a fast variable
correction which differs from the traditional HMM method. 
Briefly, the scheme contains three parts: solving the steady state of
flow, calculating the correction term, and solving the equation of the
riverbed. We only focus on last two parts in this section. After
introducing the algorithms to calculate the correction term and solve the
equation of the riverbed, we will give an implementation framework.

\subsection{Calculating the correction term}\label{sec:scheme-correction}
Correction term contains two parts: the $\cO(\tilde{\tau})$ term and
$\cO(\varepsilon)$ term. The former part needs $\tilde{B}^{(0)}$ which
is solved later in section \ref{subsec:advequ}. Here, we assume 
$\tilde{B}^{(0)}$ can be acquired somehow.

\subsubsection{1D case}
The time correction term could be calculated according to
\eqref{equ:1d-varphi1} analytically after having $\tilde{B}^{(0)}$.
Hence, we only focus on the $\cO(\varepsilon)$ term. For
\eqref{equ:1d-varphi0}, we will implement a method which provides an
inspiration on solving the correction term in 2D case. 

We rewrite \eqref{equ:1d-varphi0} in the following formulation:
\begin{equation}\label{equ:1d-steadystate_varphi}
    \bF(\bvarphi^{(0)})_x = \bS(x),
\end{equation}
where $\bvarphi^{(0)}=(\varphi_h^{(0)}, \varphi_u^{(0)})^T$,
\[
\bF(\bvarphi^{(0)}) = \begin{pmatrix} u^{(0)}\varphi_h^{(0)} +
h^{(0)}\varphi_u^{(0)}\\
g\varphi_h^{(0)} + u^{(0)}\varphi_u^{(0)}
\end{pmatrix} \qquad
\bS(x) = \begin{pmatrix}
\dfrac{gh^{(0)}\lambda_B^{(0)}(h^{(0)},u^{(0)})}{(u^{(0)})^2-gh^{(0)}}B^{(0)}_x\\
\dfrac{gu^{(0)}\lambda_B^{(0)}(h^{(0)},u^{(0)})}{(u^{(0)})^2-gh^{(0)}}B^{(0)}_x
\end{pmatrix}.
\]
Here, $\bS$ is written as the function of $x$ since the steady states
can be computed with fixed $B^{(0)}$.  Further, notice that
\[
\frac{\partial\bF}{\partial \bvarphi^{(0)}} = \begin{pmatrix}
	u^{(0)} & h^{(0)}\\
	g^{(0)} & u^{(0)}\\
\end{pmatrix}
\]
shares the same eigenvalues with 1D shallow water equations.
Therefore, the {\it flux-based wave decomposition method}
\cite{bale2002fluxdecom} can be applied to solve $\bvarphi^{(0)}$. 
More precisely, let $s_{i-1/2}^p, \br_{i-1/2}^p (p=1,2)$ be the
eigenvalues and eigenvectors of Jacobi matrix $\left(\partial
\bF/\partial \bvarphi^{(0)}\right)_{i-1/2}$ respectively. Here,
$\left(\partial \bF/\partial \bvarphi^{(0)}\right)_{i-1/2}$ is
acquired by using the Roe averages $h_{i-1/2}^{(0)}, u_{i-1/2}^{(0)}$.
To make it more concise, we omit the superscript $^{(0)}$ below in
this subsection. The algorithm is described as follows:

\begin{enumerate}[leftmargin=*]
\item Decompose the fluxes as
\[
  \bF_{i}-\bF_{i-1} =\sum_{p=1}^2 \alpha_{i-1/2}^p s_{i-1/2}^p
  \br_{i-1/2}^p.
\]
where 
\begin{gather*}
s_{i-1/2}^1=u_{i-1/2}+\sqrt{gh_{i-1/2}} \qquad
s_{i-1/2}^2=u_{i-1/2}-\sqrt{gh_{i-1/2}},\\
\br_{i-1/2}^1 = \begin{pmatrix} 
\sqrt{\dfrac{h_{i-1/2}}{g}} \\ 1 
\end{pmatrix} \qquad
\br_{i-1/2}^2 = \begin{pmatrix} 
-\sqrt{\dfrac{h_{i-1/2}}{g}} \\ 1 
\end{pmatrix},
\end{gather*}
and 
\[
\begin{pmatrix}
\alpha_{i-1/2}^1s_{i-1/2}^1 \\ \alpha_{i-1/2}^2 s_{i-1/2}^2
\end{pmatrix}
=\begin{pmatrix}
\dfrac{1}{2}\sqrt{\dfrac{g}{h_{i-1/2}}} & \dfrac{1}{2} \\
-\dfrac{1}{2}\sqrt{\dfrac{g}{h_{i-1/2}}} & \dfrac{1}{2}
\end{pmatrix}
\begin{pmatrix}
u_i\varphi_{h,i}+h_i\varphi_{u,i}-u_{i-1}\varphi_{h,i-1}-h_{i-1}\varphi_{u,i-1}\\
g\varphi_{h,i}+u_{i}\varphi_{u,i}-g\varphi_{h,i-1}-u_{i-1}\varphi_{u,i-1}
\end{pmatrix}.
\]
\item Calculate wave fluctuations by
\[
	\bF_{i-1/2}^{\pm} :=
	\sum_{p=1}^2(s_{i-1/2}^p)^{\pm}\alpha_{i-1/2}^pr_{i-1/2}^p, 
\]
where $(s)^+=\max(s,0), (s)^-=\min(s,0)$. By the subcritical
assumption, 
\[
\bF_{i-1/2}^+ = \alpha_{i-1/2}^1
\begin{pmatrix}\sqrt{\dfrac{h_{i-1/2}}{g}}\\1 \end{pmatrix} \qquad 
\bF_{i+1/2}^- = \alpha_{i+1/2}^2
\begin{pmatrix}-\sqrt{\dfrac{h_{i+1/2}}{g}}\\ 1\end{pmatrix}.
\]

\item Solve the algebraic linear system
\begin{equation} \label{equ:1d-solve}
\bF_{i-1/2}^+ + \bF_{i+1/2}^- = \Delta x \bS_{i},
\end{equation}
where the central difference is used to discretize $\bS_i$: 
\[
    \bS_i=\begin{pmatrix}
        S_{1,i}\\
        S_{2,i}
    \end{pmatrix} \quad  
    S_{1,i} = \frac{g h_i\lambda_B(h_i,u_i)}{u_i^2-gh_i}\cdot\frac{B_{i+1}-
    B_{i-1}}{2\Delta x}\quad S_{2,i}=-\frac{u_i}{h_i}S_{1,i}.
\]
\end{enumerate}
Note that $\lambda_B(h,u) = 0$ if $u=0$ from \eqref{equ:1d-0}, then
the scheme is naturally well-balanced. Further, zero boundary
condition is enforced in a computational domain $[a,b]$, i.e.
$\varphi_{h}(a)=\varphi_{h}(b)=0$ and $\varphi_{u}(a) =
\varphi_{u}(b)=0$. For the linear system, the BiCGSTAB solver is
used whose parameters will be specified in the numerical test.

\subsubsection{2D case}\label{sec:2d-case-correction}
For 2D case, We need to combine \eqref{equ:2d-varphi1-2} with
\eqref{equ:2d-varphi1-1} to obtain the $\mathcal{O}(\tilde{\tau})$
term, and to solve \eqref{equ:2d-varphi0-hat} and
\eqref{equ:2d-varphi0-bar} to obtain the $\cO(\varepsilon)$ term. 
Notice that \eqref{equ:2d-varphi1-1}, \eqref{equ:2d-varphi0-bar}, and
\eqref{equ:2d-varphi0-hat} are of the same form as follow,
\begin{equation}\label{equ:varphi-old-form}
\left\{
\begin{aligned}
	\nabla\cdot(h^{(0)}\bphi_\bu+\bu^{(0)}\phi_h) &= S_h, \\
  \cL^{\bu^{(0)}}(\bphi_\bu)+\nabla(g\phi_h+\bu^{(0)T}\bphi_\bu)
  &= \bS_\bu.
\end{aligned}
\right.
\end{equation}
Thus, we only present the numerical scheme to solve
\eqref{equ:varphi-old-form}, which can be written as
\[
\begin{pmatrix}
	h^{(0)}\phi_u + u^{(0)}\phi_h\\
	u^{(0)}\phi_u + v^{(0)}\phi_v + g\phi_h\\
	0
\end{pmatrix}_x
+
\begin{pmatrix}
	h^{(0)}\phi_v + v^{(0)}\phi_h\\
	0\\
	u^{(0)}\phi_u + v^{(0)}\phi_v + g\phi_h
\end{pmatrix}_y
=\begin{pmatrix}
	S_h\\
	\bS_\bu-\cL^{\bu^{(0)}}(\bphi_\bu)
\end{pmatrix}.
\]
This form is inappropriate to solve due to the degeneration of the
fluxes. To fix it, we add an additional term to both sides:
\[
	\begin{pmatrix}
		0\\
		(v^{(0)}\phi_u)_y-(v^{(0)}\phi_v)_x\\
		(u^{(0)}\phi_v)_x-(u^{(0)}\phi_u)_y
	\end{pmatrix}. 
\]
It is interesting to note that this fixing term degenerates to zero
for 1D case. Therefore, \eqref{equ:varphi-old-form} can be recast as  
\begin{equation}\label{equ:varphi-new-form}
	\bF(\bphi,x,y)_x+\bG(\bphi,x,y)_y = \tilde{\bS},
\end{equation}
where $\bphi = (h, \bphi_\bu)^T$, 
\[
\bF(\bphi,x,y)=\begin{pmatrix}
	h^{(0)}\phi_u + u^{(0)}\phi_h\\
	u^{(0)}\phi_u + g\phi_h\\
	u^{(0)}\phi_v
\end{pmatrix}, \qquad
\bG(\bphi,x,y)=\begin{pmatrix}
	h^{(0)}\phi_v + v^{(0)}\phi_h\\
	v^{(0)}\phi_u\\
	v^{(0)}\phi_v+g\phi_h
\end{pmatrix},
\]
and 
\[
\tilde{\bS}=\begin{pmatrix}
	S_h\\
  \bS_\bu - \cL^{\bu^{(0)}}(\bphi_\bu) + \cL_f^{\bu^{(0)}}(\bphi_u)
\end{pmatrix}, \qquad
\cL_f^{\bu^{(0)}}(\bphi_u)=\begin{pmatrix}
	v^{(0)}_y\phi_u-u^{(0)}_y\phi_v\\
	u^{(0)}_x\phi_v-v^{(0)}_x\phi_u
\end{pmatrix}.
\]
We note again that $\partial\bF/\partial\bphi,
\partial\bG/\partial\bphi$ share the same eigenvalues with the 2D
shallow water equations. Denote $s_{i-1/2,j}^p, \br_{i-1/2,j}^p
(p=1,2,3)$ the eigenvalues and eigenvectors of Roe-averaged
$\left(\partial\bF/\partial \bphi\right)_{i-1/2,j}$, and
$s_{i,j-1/2}^p, \br_{i,j-1/2}^p (p=1,2,3)$ the eigenvalues and
eigenvectors of Roe-averaged $\left(\partial \bG/\partial \bphi
\right)_{i,j-1/2}$. To be concise, we omit the
superscript $^{(0)}$ below in this subsection. The detailed algorithm
for \eqref{equ:varphi-old-form} is detailed as follows:

\begin{enumerate}[leftmargin=*]
\item Decompose fluxes as
\[
\begin{aligned}
\bF_{i,j}-\bF_{i-1,j} &=
\sum_{p=1}^3\alpha_{i-1/2,j}^ps_{i-1/2,j}^p \br_{i-1/2,j}^p,\\
\bG_{i,j}-\bG_{i,j-1} &=
\sum_{p=1}^3\alpha_{i,j-1/2}^ps_{i,j-1/2}^p \br_{i,j-1/2}^p,
\end{aligned}
\]
where 
$$ 
\begin{aligned}
& s_{i-1/2,j}^1 = u_{i-1/2,j} + \sqrt{gh_{i-1/2,j}}, \quad s_{i-1/2,j}^2 =
u_{i-1/2,j}, \quad s_{i-1/2,j}^3 = u_{i-1/2,j} -
\sqrt{gh_{i-1/2,j}},\\
& \br_{i-1/2,j}^1 = \begin{pmatrix}
\sqrt{\frac{h_{i-1/2,j}}{g}} \\ 1 \\ 0
\end{pmatrix},
\qquad \quad \quad  
\br_{i-1/2,j}^2 = \begin{pmatrix}
0 \\ 0 \\ 1
\end{pmatrix},
\qquad \quad
\br_{i-1/2,j}^3 = \begin{pmatrix}
-\sqrt{\frac{h_{i-1/2,j}}{g}} \\ 1 \\ 0
\end{pmatrix},
\end{aligned}
$$ 
and
$$ 
\begin{aligned}
& s_{i,j-1/2}^1 = v_{i,j-1/2} + \sqrt{gh_{i,j-1/2}}, \quad s_{i,j-1/2}^2 =
v_{i,j-1/2}, \quad s_{i,j-1/2}^3 = v_{i,j-1/2} -
\sqrt{gh_{i,j-1/2}},\\
& \br_{i,j-1/2}^1 = \begin{pmatrix}
\sqrt{\frac{h_{i,j-1/2}}{g}} \\ 0 \\ 1
\end{pmatrix},
\qquad \quad \quad  
\br_{i,j-1/2}^2 = \begin{pmatrix}
0 \\ 1 \\ 0
\end{pmatrix},
\qquad \quad
\br_{i,j-1/2}^3 = \begin{pmatrix}
-\sqrt{\frac{h_{i,j-1/2}}{g}} \\ 0 \\ 1
\end{pmatrix}.
\end{aligned}
$$ 
\item Calculate wave fluctuations by
\[
  \bF_{i-1/2,j}^{\pm} = \sum_{p=1}^3(s_{i-1/2,j})^{\pm}
  \alpha_{i-1/2,j}^p \br_{i-1/2,j}^p, \qquad 
	\bG_{i,j-1/2}^{\pm} = \sum_{p=1}^3(s_{i,j-1/2})^{\pm}
	\alpha_{i,j-1/2}^p \br_{i,j-1/2}^p.
\]
\item Solve the algebraic linear system
\[
	(\bF_{i-1/2,j}^+ + \bF_{i+1/2,j}^- )\Delta y
  +(\bG_{i,j-1/2}^+ + \bG_{i,j+1/2})^- \Delta x = \Delta x\Delta y
  \tilde{\bS}_{i,j},
\]
where the central difference is used to discretize
$\tilde{\bS}_{i,j}$.  Similar to the 1D case, we need to write
$\alpha^p$ in terms of  $\bphi_{i\pm1,j\pm1}$ to obtain a  linear
system. We also note that $\tilde{\bS}_{i,j}$ may contain $\bphi$,
the corresponding terms of which should be moved to left
hand side when building the linear system. Further, zero boundary
condition is enforced as the 1D case.
\end{enumerate}

The eigenvalues of $\partial \bF/\partial\bphi$ and $\partial
\bG/\partial\bphi$ cannot guaranteed to be away from zero under the
subcritical assumption. Therefore,  we make usage of the {\it Harten's
entropy fix} \cite{harten1983highresolution} to stablize algorithm.
The wave fluctuations after the entropy fix are as follows:
\[
\bF_{i-1/2,j}^{\pm}=\sum_{p=1}^3(s_{i-1/2,j})^{\pm}
\alpha_{i-1/2,j}^pr_{i-1/2,j}^p
\pm\frac{1}{2}\hat{\bM}_{i-1/2,j}
(\bphi_{i,j}-\bphi_{i-1,j}),
\]
where
\[
\begin{aligned}
& \hat{\bM}_{i-1/2,j} = \bR_{i-1/2,j}\mathrm{diag}
	\{\rho_{i-1/2,j}^p\} \bR_{i-1/2,j}^{-1},\\
	\rho_{i-1/2,j}^p &= \begin{cases}
		0, & \text{if }|s_{i-1/2,j}^p|>\delta, \\
		[(s_{i-1/2,j}^p)^2+\delta^2]/(2\delta) -|s_{i-1/2,j}^p|, & \text{otherwise,}
	\end{cases} \qquad p = 1,2,3.
\end{aligned}
\]
Here, $\delta$ is a small positive constant, $\bR_{i-1/2,j} =
[\br_{i-1/2,j}^1, \br_{i-1/2,j}^2, \br_{i-1/2,j}^3]$. We apply the entropy fix
in the $y$ direction in the same way.

\begin{remark}
In the subcritical assumption, the eigenvalues in 1D case are
guaranteed to be away from zero. Thus, entropy fix is not applied in
1D case.
\end{remark}

\begin{remark}
Here, we only use the first order scheme to solve the correction
terms in consideration of the accuracy. Specifically, the
$\mathcal{O}(\varepsilon)$ correction term $\varphi_h^{(0)}$ always
has the contribution as $\varepsilon\varphi_h^{(0)}$, whose error is
$\cO(\varepsilon \Delta x)$ that consistent with the overall error.   
\end{remark}

\begin{remark}
If $\bu^{(0)} = \bzero$, then $\bar{\bvarphi}_{\bu}^{(1)} = \bzero$
from \eqref{equ:2d-varphi1-1}, which implies that $\hat{\varphi}_h^{(1)} =
0$ and $\hat{\bvarphi}_h^{(1)} = 0$ in \eqref{equ:2d-varphi1-2}.
Further, $\bu^{(0)} = \bzero$ implies that $\blambda_B^{(0)} = \bzero$
and $S_B^{(0)} = 0$ from \eqref{equ:2d-lambda-S}. Consequently, we
have $\hat{\varphi}_h^{(0)} = 0$ and $\hat{\bvarphi}_\bu^{(0)} =
\bzero$ in \eqref{equ:2d-varphi0-hat0}, thus $\varphi_h^{(0)} = 0$ and
$\bvarphi_\bu^{(0)} = \bzero$ in \eqref{equ:2d-varphi0}. Therefore,
the 2D scheme is well-balanced.
\end{remark}

\subsection{Solving the riverbed equation}\label{subsec:advequ}
The homogenized models of both zeroth order and first order can be
written in the following common form
\begin{equation} \label{equ:1d-adv}
B_{\tau}+\lambda B_x=0
\end{equation}
for 1D case and 
\begin{equation}
B_{\tau}+\blambda \cdot\nabla B=S
\label{equ:2d-adv}
\end{equation}
for 2D case, respectively.  It suffices to describe the scheme for
\eqref{equ:2d-adv} since the numerical scheme for 1D case is a
simplification of that for 2D case.  In light of \eqref{equ:2d-1} and
\eqref{equ:2d-corr-lambda-S}, we have
$\blambda=\blambda^{(1)}(h^{(0)}+\varepsilon\varphi_h^{(0)},\bu^{(0)}
+\varepsilon\bvarphi_\bu^{(0)}),S=S^{(1)}(h^{(0)}+\varepsilon\varphi_h^{(0)},\bu^{(0)}+\varepsilon\bvarphi_\bu^{(0)})$.
Here, $h^{(0)}(\bx,\tau),\bu^{(0)}(\bx,\tau)$ represent the steady
state of shallow water equations when $B$ is fixed to $B(x,\tau)$,
and $\varphi_h^{(0)},\bvarphi_\bu^{(0)}$ are the functions of
$h^{(0)},\bu^{(0)}$ according to \eqref{equ:2d-varphi0} and
\eqref{equ:2d-varphi0-hat}. 

First, assume $\blambda$ and $S$ are known. We modify the second order
{\it TVD Runge-Kutta scheme} \cite{gottlieb1998tvdrungekutta} to solve
\eqref{equ:2d-adv} as \begin{equation}\label{equ:riverbed-1}
	\begin{aligned}
		\tilde{B}_{i,j}^{n+1} = B_{i,j}^n&-\frac{\Delta \tau}{\Delta x}
		\lambda_{i,j}^{x,n}(B_{i,j}^{n,R}-B_{i,j}^{n,L}) \\
		&-\frac{\Delta\tau}{\Delta y}
		\lambda_{i,j}^{y,n}(B_{i,j}^{n,U}-B_{i,j}^{n,D})+
		\Delta\tau S_{i,j}^n,
	\end{aligned}
\end{equation}
\begin{equation}\label{equ:riverbed-2}
	\begin{aligned}
		B_{i,j}^{n+1} = \frac{1}{2}(B_{i,j}^n+\tilde{B}_{i,j}^{n+1})&-
	\frac{\Delta \tau}{2 \Delta x}
	\lambda_{i,j}^{x,n+1}
	(\tilde{B}_{i,j}^{n+1,R}-\tilde{B}_{i,j}^{n+1,L}) \\
	&-\frac{\Delta\tau}{2\Delta y}
	\lambda_{i,j}^{y,n+1}(\tilde{B}_{i,j}^{n+1,U}-\tilde{B}_{i,j}^{n+1,D})+
	\frac{\Delta\tau}{2} S_{i,j}^{n+1},
\end{aligned}
\end{equation}
where 
$$
\begin{aligned}
  B_{i,j}^{n,L} &=f^{\text{upwind}}(B_{i-1/2,j}^{n,L}, B_{i-1/2,j}^{n,R},
  \lambda_{i,j}^{x,n}), \quad B_{i,j}^{n,R}
  =f^{\text{upwind}}(B_{i+1/2,j}^{n,L},B_{i+1/2,j}^{n,R},
      \lambda_{i,j}^{x,n}),\\
  B_{i,j}^{n,D} &=f^{\text{upwind}}(B_{i,j-1/2}^{n,D},
      B_{i,j-1/2}^{n,U}, \lambda_{i,j}^{y,n}), \quad 
  B_{i,j}^{n,U} =f^{\text{upwind}}(B_{i,j+1/2}^{n,D},
      B_{i,j+1/2}^{n,U}, \lambda_{i,j}^{y,n}).
\end{aligned}
$$ 
Here, $f^{\text{upwind}}$ is an upwind flux function that 
\[
	f^{\text{upwind}}(a,b,\lambda)=\begin{cases}
		a,\quad\text{if }\lambda>0,\\
		b,\quad\text{if }\lambda<0.
	\end{cases}
\]

To achieve the second order spatial discretization, we apply the {\it
MUSCL-type slope limiter} \cite{vanleer1979muscl} to obtain  
\[
\begin{aligned}
	B_{i-1/2,j}^{n,L}&=B_{i-1,j}^n+\frac{1}{2}\phi(r_{i-1,j}^{x,n})
	(B_{i,j}^n-B_{i-1,j}^n),\\
	B_{i-1/2,j}^{n,R}&=B_{i,j}^n-\frac{1}{2}\phi(r_{i,j}^{x,n})
	(B_{i+1,j}^n-B_{i,j}^n),
\end{aligned}
\]
where $r_{i,j}^{x,n}=(B_{i,j}^n-B_{i-1,j})/(B_{i+1,j}^n-B_{i,j}^n)$
and $\phi(r)=\max(0,\min(1,r))$ is the minmod limiter. Discretization
on $y$ direction takes the same form. In multidimensional cases, this
slope limiter scheme may bring spurious oscillations in regions with
large gradients in conservation laws. When having a riverbed with
sharp shape, we can require the limiter to satisfy a limiting
condition in \cite{kim2005mlp} by setting the {\it MLP-type limiter}
as an upper bound.

It remains to show how to obtain $\blambda^{n},\blambda^{n+1}$ and
$S^n,S^{n+1}$. Based on $B^{n}$, the steady states  $h^{(0),n}$ and
$\bu^{(0),n}$ can be computed, as well as $\varphi_h^{(0),n},
\bvarphi_\bu^{(0),n}$ due to \eqref{equ:2d-1} and
\eqref{equ:2d-corr-lambda-S}. We note that slope limiters of the
$h^{(0)}$ and $\bu^{(0)}$ are applied to calculate the source term
$S^n$.  For $\blambda^{n+1}, S^{n+1}$, one option is to repeat the
above procedure when fixing $B$ to $\tilde{B}^{n+1}$. Another option
is to apply the $\mathcal{O}(\tilde{\tau})$ correction to approximate
the desired
terms 
\begin{equation} \label{equ:runge-kutta-approximation}
	\begin{aligned}
  h^{(0),n+1}+\varepsilon\varphi_h^{(0),n+1} &\approx h^{(0),n}+
  \varphi_h^{(1),n}(\Delta\tau) + \varepsilon\varphi_h^{(0),n},\\
  \bu^{(0),n+1}+\varepsilon\varphi_\bu^{(0),n+1} &\approx
  \bu^{(0),n}+ \varphi_\bu^{(1),n}(\Delta\tau) +
  \varepsilon\varphi_\bu^{(0),n},\\
	\end{aligned}
\end{equation}
where $\varphi_h^{(1),n},\bvarphi_\bu^{(1),n}$ can be acquired by
$B^{n},\tilde{B}^{n+1},h^{(0),n},\bu^{(0),n}$  by
\eqref{equ:2d-varphi1-1} and \eqref{equ:2d-varphi1-2}.
Here, we use $\varepsilon\bvarphi^{(0),n}$ to approximate
$\varepsilon\bvarphi^{(0),n+1}$ with error
$\cO(\varepsilon\Delta\tau)$, and use $h^{(0),n}+
\varphi_h^{(1),n}(\Delta\tau)$ to approximate $h^{(0),n+1}$ with error
$\cO(\Delta \tau^2)$ (the same with $\bu$).

The error can be roughly estimated as below. First, the error of
$\blambda^{n+1}$ and $S^{n+1}$ are of order
$\mathcal{O}(\varepsilon^2 + \varepsilon\Delta\tau + \Delta\tau^2)$.
Since the TVD Runge-Kutta scheme is applied up to time
$\mathcal{O}(1)$ in $\tau$ scale, the error in computing the riverbed
is as $\mathcal{O}(\Delta x^2+\Delta \tau ^2)$. Hence, the total error
is approximately of order $\mathcal{O}(\varepsilon^2 + \varepsilon
\Delta \tau + \Delta x^2 + \Delta \tau^2)$. Then, by the CFL condition
we have that $\Delta \tau \times (\text{speed of the riverbed
evolving}) \sim \Delta x$, the total error is of order
\begin{equation}\label{equ:error-nondim}
\mathcal{O}(\varepsilon^2+\varepsilon\Delta x+\Delta x^2).
\end{equation}
As shown above, we solve the steady state $h^{(0),n}$, and then use
$h^{(0),n}+ \varphi_h^{(1),n}(\Delta\tau)$ to approximate
$h^{(0),n+1}$.  Actually, such approximation can be repeated for
several successive steps, i.e. using $h^{(0),n+1}+
\varphi_h^{(1),n+1}(\Delta\tau)$ to approximate $h^{(0),n+2}$.  In a
practical simulation, in order to make the computation more efficient,
we will apply this approximation for fixed steps (denote as $K$
later on, $K$ is not big, say 2 or 3), i.e. we only solve steady state
for $h^{(0),n}$ and approximate $h^{(0),n+1},\cdots,h^{(0),n+K}$ through
time correction term. During these steps, we use the same
$\mathcal{O}(\varepsilon)$ correction $\varepsilon \varphi_h^{(0),n}$,
which does not affect the overall error.

\subsection{Sediment transport algorithm}\label{subsec:alg}
After all the preparations above, we are ready to give the  
second order algorithm for sediment transport. We will only give the
algorithm in 2D case below for conciseness.
\begin{description}
\item[Step 1] {\bf Initialization}: Let $t=0, n=0$, and set the
  initial data $B^0$. Give a positive integer $K$ and we will take $K$
  macro steps forward for every sample. 
\item[Step 2] {\bf  Sampling and calculating the $\cO(\varepsilon)$ term
  correction.}
  \begin{itemize}[leftmargin=*]
    \item Sampling: Fix $B=B^n$, apply the steady state solver to
    obtain $h^{(0),n}$, $\bu^{(0),n}$. 
  \item Solve \eqref{equ:2d-varphi0-hat0} to obtain
  $\hat{\varphi}_h^{(0),n},\hat{\bvarphi}_\bu^{(0),n}$:
  \[ 
  \left\{
    \begin{aligned}
      &\nabla \cdot (h^{(0),n} \hat{\bvarphi}_\bu^{(0),n} + \bu^{(0),n}
      \hat{\varphi}_h^{(0),n}) = 0, \\
      &\cL^{\bu^{(0)},n}(\hat{\bvarphi}_\bu^{(0),n}) + \nabla (g
    \hat{\varphi}_h^{(0),n} + (\bu^{(0),n})^T \hat{\bvarphi}_\bu^{(0),n}) =
      \cL^{\bu^{(0)},n} \left(
      \frac{g\bu^{(0),n}}{|\bu^{(0),n}|^2-gh^{(0),n}}(-\blambda_B^{(0),n} \nabla B^{n} +
      S_B^{(0),n}) \right).
    \end{aligned}
  \right.
\]
where $\blambda_B^{(0),n}=\blambda_B^{(0)}(h^{(0),n},\bu^{(0),n}), 
S_B^{(0),n}=S_B^{(0)}(h^{(0),n},\bu^{(0),n})$ by \eqref{equ:2d-lambda-S}.
\item Solve \eqref{equ:2d-varphi0} to obtain  $\varphi_h^{(0),n},\bvarphi_\bu^{(0),n}$
    \[
  \left\{
  \begin{aligned}
    &\nabla\cdot(h^{(0),n}\bvarphi_\bu^{(0),n} + \bu^{(0)}\varphi_h^{(0),n}) = 
    \frac{gh^{(0),n}}{|\bu^{(0),n}|^2-gh^{(0),n}}(\blambda_B^{(0),n} \nabla B^n - S_B^{(0),n}) -
    \hat{\varphi}_h^{(0),n}, \\
    &\cL^{\bu^{(0),n}}(\bvarphi_\bu^{(0),n}) + \nabla
    (\bu^{(0),T}\bvarphi_\bu^{(0),n} + g\varphi_h^{(0),n}) =
    \frac{g\bu^{(0),n}}{|\bu^{(0),n}|^2-gh^{(0),n}}(-\blambda_B^{(0),n} \nabla B^n +
    S_B^{(0),n}) -
    \hat{\bvarphi}_\bu^{(0),n}.
  \end{aligned}
  \right.
\] 
\item Apply the $\cO(\varepsilon)$ correction: Let $m=0$, $t = t^n$
and 
  $$ 
    B^{n,0}=B^{n}, \qquad h^{n,0}=h^{(0),n}+\varepsilon\varphi_h^{(0),n},
    \qquad \bu^{n,0} = \bu^{(0),n}+\varepsilon\bvarphi_\bu^{(0),n}.
  $$
  \end{itemize}
\item[Step 3] {\bf Riverbed prediction}
    \begin{itemize}[leftmargin=*]
     \item Use $h^{n,m},\bu^{n,m}$ to calculate characteristic speed
     $\blambda^{n,m}$ and source term $S^{n,m}$ according to
     \eqref{equ:2d-corr-lambda-S}:
    \[
    \blambda^{n,m} =\blambda_B^{(1)}(h^{n,m},\bu^{n,m}),\quad
     S^{n,m}=S_B^{(1)}(h^{n,m},\bu^{n,m}).
    \]
\item Calculate $\tilde{B}^{n,m+1}$ using \eqref{equ:riverbed-1}:
    \[
    \begin{aligned}
        \tilde{B}_{i,j}^{n,m+1} = B_{i,j}^{n,m}&-\frac{\Delta \tau}{\Delta x}
		\lambda_{i,j}^{x,n,m}(B_{i,j}^{n,m,R}-B_{i,j}^{n,m,L}) \\
		&-\frac{\Delta\tau}{\Delta y}
		\lambda_{i,j}^{y,n,m}(B_{i,j}^{n,m,U}-B_{i,j}^{n,m,D})+
        \Delta\tau S_{i,j}^{n,m}.
	\end{aligned}
\]
Here, the time step $\Delta\tau^{n,m}$ is determined by the CFL
condition, namely
  \[
  \Delta\tau^{n,m} = 
  C_{\mathrm{cfl}}\cdot\frac{1}{\max\limits_{i,j}\{
  |\lambda_{i,j}^{x,n,m}|/\Delta x+|\lambda_{i,j}^{y,n,m}|/\Delta y\}}.
  \]
  where $0<C_{\mathrm{cfl}}<1$.
  \end{itemize}
\item[Step 4] {\bf Approximate the steady state by time correction}
    \begin{itemize}[leftmargin=*]
    \item Let $\bar{h}^{n,m}=h^{n,m}-\varepsilon\varphi_h^{(0),n},
          \bar{\bu}^{n,m}=\bu^{n,m}-\varepsilon\bvarphi_\bu^{(0),n}$.
          In this step, we use $\bar{h}^{n,m},\bar{\bu}^{n,m}$ other
          than $h^{n,m},\bu^{n,m}$ to approximate the steady states. 
        \item Solve $\bar{\varphi}_h^{(1),n,m},\bar{\bvarphi}_\bu^{(1),n,m}$ by 
            \eqref{equ:2d-varphi1-1}
            \begin{gather*}
            \bar{\varphi}_h^{(1),n,m} = \frac{g\bar{h}^{n,m}}{|\bar{\bu}^{n,m}|^2-
            g\bar{h}^{n,m}}(\tilde{B}^{n,m+1}-B^{n,m}), \\
              \bar{\bvarphi}_\bu^{(1),n,m} =
              -\frac{g\bar{\bu}^{n,m}}{|\bar{\bu}^{n,m}|^2-g\bar{h}^{n,m}}(\tilde{B}^{n,m+1}-
                  B^{n,m}).
  \end{gather*}
  \item Solve $\hat{\varphi}_h^{(1),n,m},\hat{\bvarphi}_\bu^{(1),n,m}$ by 
      \eqref{equ:2d-varphi1-2}:
      \[
    \left\{
    \begin{aligned}
    &\nabla \cdot (\bar{h}^{n,m}\hat{\bvarphi}_\bu^{(1),n,m} +
        \bar{\bu}^{n,m}\hat{\varphi}_h^{(1),n,m}) = 0,\\
        &\cL^{\bar{\bu}^{n,m}}(\hat{\bvarphi}_\bu^{(1),n,m}) + \nabla \cdot
        (g\hat{\varphi}_h^{(1),n,m} + (\bu^{n,m})^T\hat{\bvarphi}_\bu^{(1),n,m}) =
        -\cL^{\bar{\bu}^{n,m}}(\bar{\bvarphi}_\bu^{(1),n,m}).
    \end{aligned}
    \right.
  \]
  \item Update the steady state:
    \[
    h^{n,m+1}= h^{n,m}+\bar{\varphi}_h^{(1),n,m}+
    \hat{\varphi}_h^{(1),n,m} + \varepsilon \varphi_h^{(0),n},
    \qquad 
    \bu^{n,m+1} = \bar{\bvarphi}_\bu^{(1),n,m}+
    \hat{\bvarphi}_{\bu}^{(1),n,m} + \varepsilon \varphi_\bu^{(0),n}.
    \]
  \end{itemize}
\item[Step 5] {\bf Riverbed correction}
    \begin{itemize}[leftmargin=*]
        \item Calculate $\blambda^{n,m+1},S^{n,m+1}$ using $h^{n,m+1},\bu^{n,m+1}$
            according to \eqref{equ:2d-corr-lambda-S}.
        \item Update riverbed $B^{n,m+1}$ by \eqref{equ:riverbed-2}:
  \[
      \begin{aligned}
          B_{i,j}^{n,m+1} = \frac{1}{2}(B_{i,j}^{n,m}+\tilde{B}_{i,j}^{n,m+1})&-
          \frac{\Delta \tau^{n,m}}{2 \Delta x}
          \lambda_{i,j}^{x,n,m+1}
          (\tilde{B}_{i,j}^{n,m+1,R}-\tilde{B}_{i,j}^{n,m+1,L}) \\
          &-\frac{\Delta\tau^{n,m}}{2\Delta y}
          \lambda_{i,j}^{y,n,m+1}(\tilde{B}_{i,j}^{n,m+1,U}-
          \tilde{B}_{i,j}^{n,m+1,D})+
          \frac{\Delta\tau^{n,m}}{2} S_{i,j}^{n,m+1}.
      \end{aligned}
  \]
  Update current time $t\rightarrow t+\Delta\tau^{n,m}/\varepsilon$
  and set $m\rightarrow m+1$.
  \end{itemize}
\item[Step 6] If $m\ge K$, set $B^{n+1}=B^{n,m}$, $n\rightarrow n+1$,
  go to {\bf Step 2}, otherwise go to {\bf Step 3}.
\end{description}

If the $\cO(\varepsilon)$ correction and {\bf Step 5} are omitted,
then the resulting scheme becomes a first order discretization, whose
overall error becomes $\cO(\varepsilon + \Delta x)$ due to the CFL
condition.  We call the scheme in such a simplified version the {\it
first order scheme}, and the scheme contains all the steps above will
be referred as the {\it second order scheme} later on.

\subsection{Nondimensionalization}
We often nondimensionalize the parameters in practice\cite{hudson2001numerical}.
Suppose the order of length, height, velocity, time and gravity
constant are $L, H, U, T, G$. We set
\[
\begin{aligned}
	&x=Lx^*, \quad y=Ly^*,\quad h=Hh^*, \quad B=HB^*,\\
	&u=Uu^*, \quad v=Uv^*,\quad t=Tt^*,\quad g=Gg^*,
\end{aligned}
\]
and the system can be reformulated in terms of $x^*,y^*,B^*,h^*,t^*$.
We may set $T=L/U,G=U^2/H$ thus the parameter $A_g$ (see
\eqref{equ:sed-Grass} or \eqref{equ:sed-MPM}) is set to be
$A_g\tilde{Q}_B/H$, where $\tilde{Q}_B=
\tilde{q}_b(U)$.  Based on the error estimate in section
\ref{subsec:advequ}, the error after nondimensionalization becomes 
\begin{equation}\label{error}
  \mathrm{Error}\sim 
  \begin{cases}
  \cO(\dfrac{\tilde{Q}_B}{H}\varepsilon+
  \dfrac{\Delta x}{L}), & \text{first order scheme,}\\
  \cO((\dfrac{\tilde{Q}_B}{H}\varepsilon)^2+
  (\dfrac{\Delta x}{L})^2+\dfrac{\tilde{Q}_B}{LH}\varepsilon\Delta x),
  &\text{second order scheme}.\\
  \end{cases}
\end{equation}

\section{Numerical Results} \label{sec:num}

In this section, we present several numerical results to validate the
effectiveness of the second order time homogenized model for sediment
transport. For all sediment transport problems considered, the initial
setup of the flow are obtained by solving the steady state on the
initial riverbed. All the computations are carried out on a laptop
computer with core speed of 2.3 GHz and the algorithm is implemented
using C++ programming language.

\subsection{One dimensional case} 
We consider the examples studied in \cite{hudson2001numerical,
  benkhaldoun2009solution}. The channel is of length with $1000 \rmm$ and
the initial riverbed is given as
\begin{equation} \label{chap7-dune-initB}
  B(x,0)=
  \left\{
    \begin{array}{ll}
     \sin^2\left(\dfrac{(x-300)\pi}{200}\right), & 300\leq x\leq 500, \\
      0, & \text{else where}. \\
    \end{array}
  \right.
\end{equation}
The initial water level is set to be $10\rmm$, and $Q$ is a constant
discharge taken case by case. Therefore, the nondimensionalized
parameters are
\[
L = 1000, \quad H = 10, \quad U = Q/10, \quad \tilde{Q}_B =
(Q/10)^{m-1}.
\]
The porosity constant $\gamma=0.4$, and the constant $A_g$
is set to $0.001$ representing the slow interaction of the riverbed
with water flow. Thus, the time scaling parameter in this case turns
to be $\varepsilon = 0.001/0.6$.  The CFL number is set to 0.65 when
updating the riverbed.  

In the Step 2 in the algorithm described in Section \ref{subsec:alg}, we 
need to get the steady state, which can be acquired by the 
standard flux-limited Roe scheme (see \cite{hudson2001numerical,
hudson2005numerical,deng2013robust}) for a long time so that 
$\lVert h^{n+1}-h^{n}\rVert_1+\lVert h^{n+1}u^{n+1}-
h^{n}u^n\rVert_1<10^{-6}$ or iteration number is bigger than 20000.
For the initial condition of the flow, we use this iteration until the
steady state is reached.  For the boundary condition used in the
steady solver, we fix the upstream discharge with $10m^2/s$ and use
the transmissive boundary condition for downstream.

Also in the Step 2 and Step 4 in the algorithm, correction terms are
need to compute.  We use the zero boundary condition and use the
BiCGSTAB solver with SSOR preconditioner in deal-II\footnote{see the
webpage at \tt{http://www.dealii.com}.} to solve the linear system. 
The tolerance of the BiCGSTAB solver is $10^{-6}$ and the relaxation
parameter of SSOR preconditioner is $0.955$. 

\subsubsection{Basic results}\label{basic-results} 
First, we present the results obtained when $Q=10 \rmm^2/\rs$ with
ending time $T=238079\rs$. The Grass model with $m=3$ (see
\eqref{equ:sed-Grass}) for the sediment transport flux is
considered at first, then numerical results of other models are given.
Later on, the convergence order of the first order and the second
order multi-scale algorithms will be computed. To make a comparison,
we have included a reference solution computed by the Roe's scheme
with the second order flux-limited method \cite{hudson2001numerical,
hudson2005numerical} using a fine mesh with 4096 grid points.

Figure \ref{fig:water} displays the sampling results (i.e. 
the depth and velocity of water) at initial time and end time.

Figure \ref{fig:dune-schemes} displays the riverbed when applying the
first order scheme and the second order scheme on mesh with
$N=256$. We set $K = 2$ to accelerate the computing. We also plot the
solution of Roe's scheme for comparison. It is clear that the first
order scheme produces the diffusive riverbed. However, this numerical
diffusive has been reduced remarkably by the second order scheme. 
\begin{figure}[!htb]
\centering
\subfigure[Depth of water at initial time]{
    \includegraphics[width=0.45\textwidth]{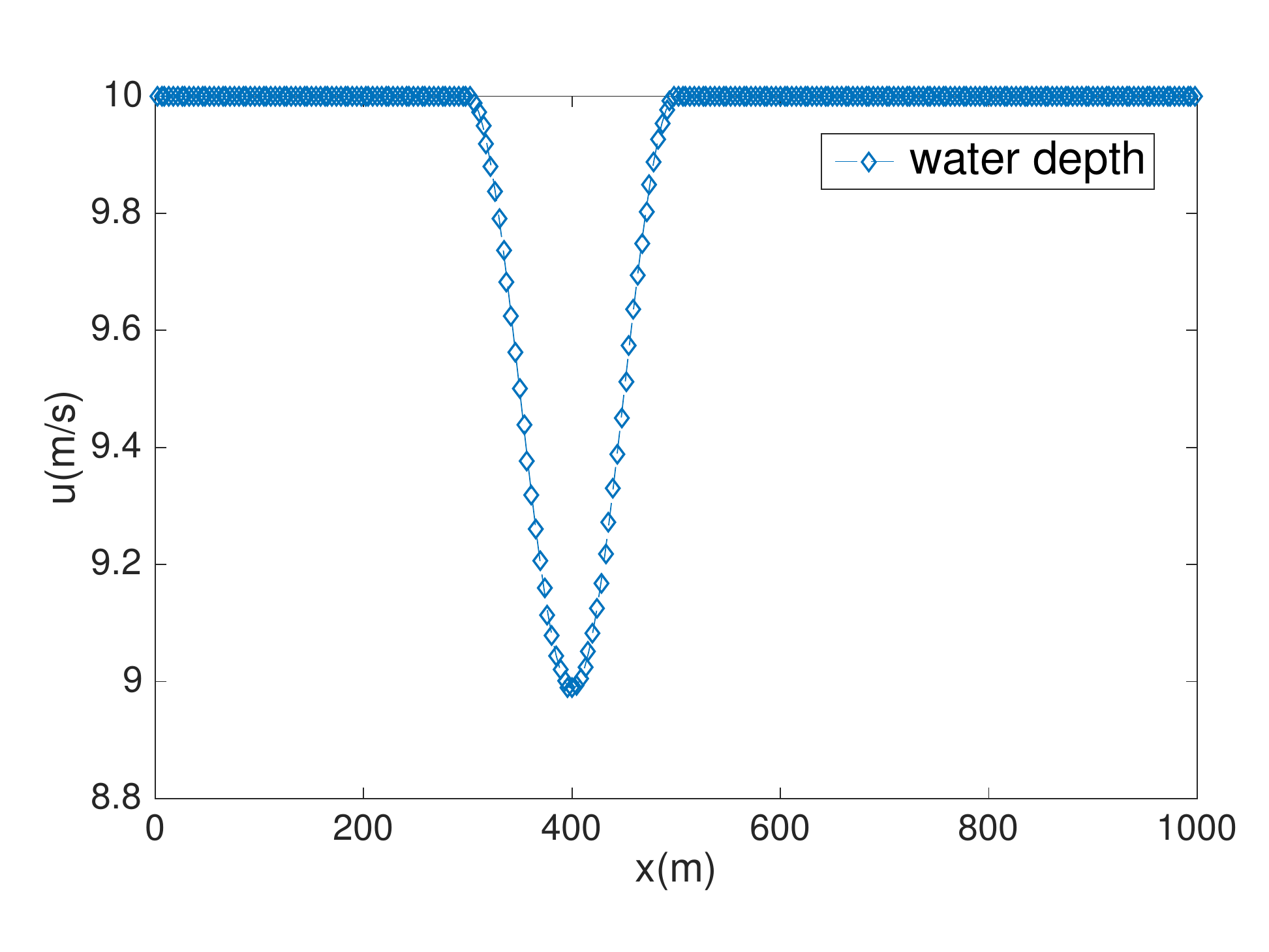}
}
\subfigure[Velocity of water at initial time]{
    \includegraphics[width=0.45\textwidth]{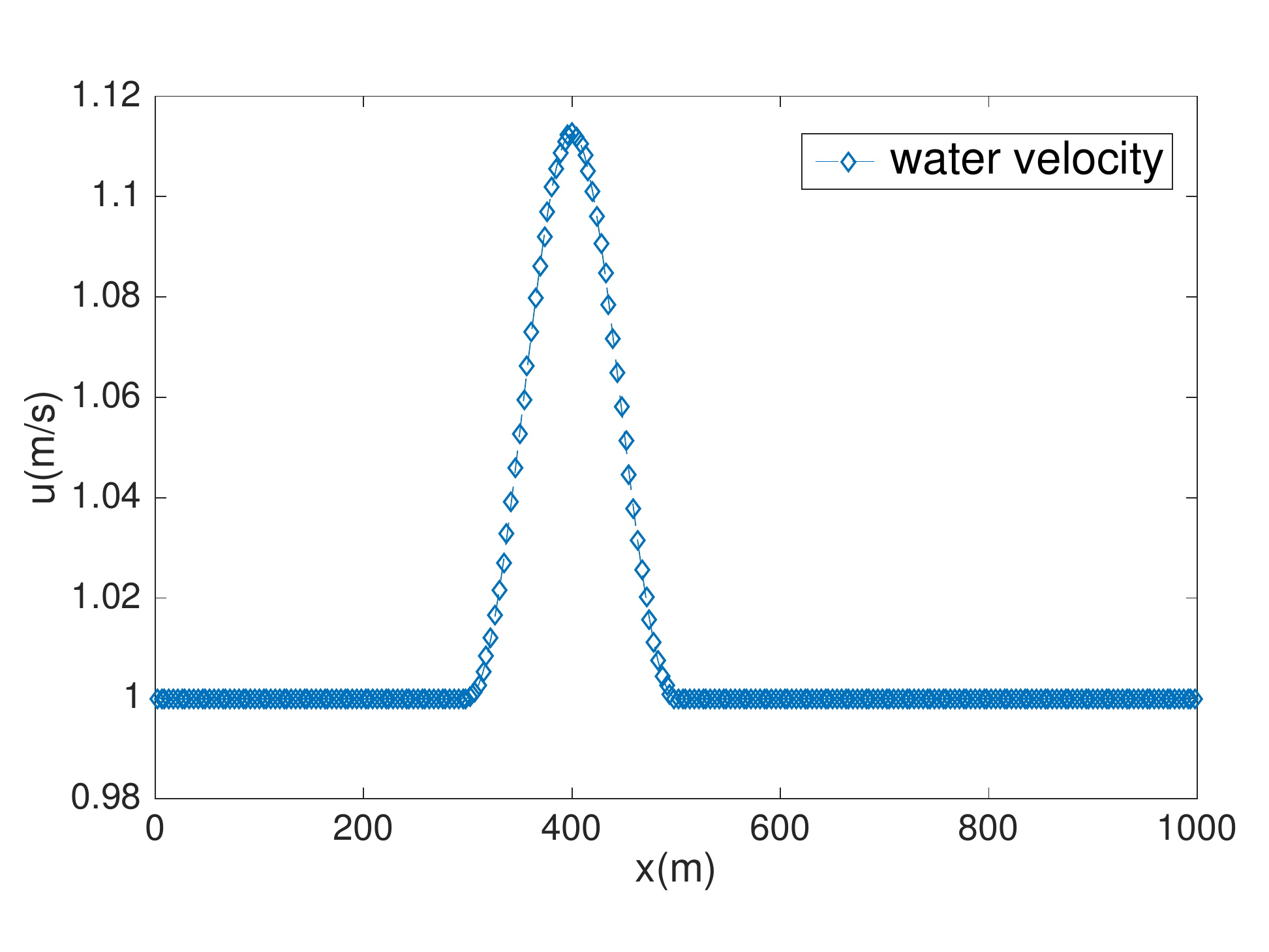}
}
\subfigure[Depth of water at end time]{
    \includegraphics[width=0.45\textwidth]{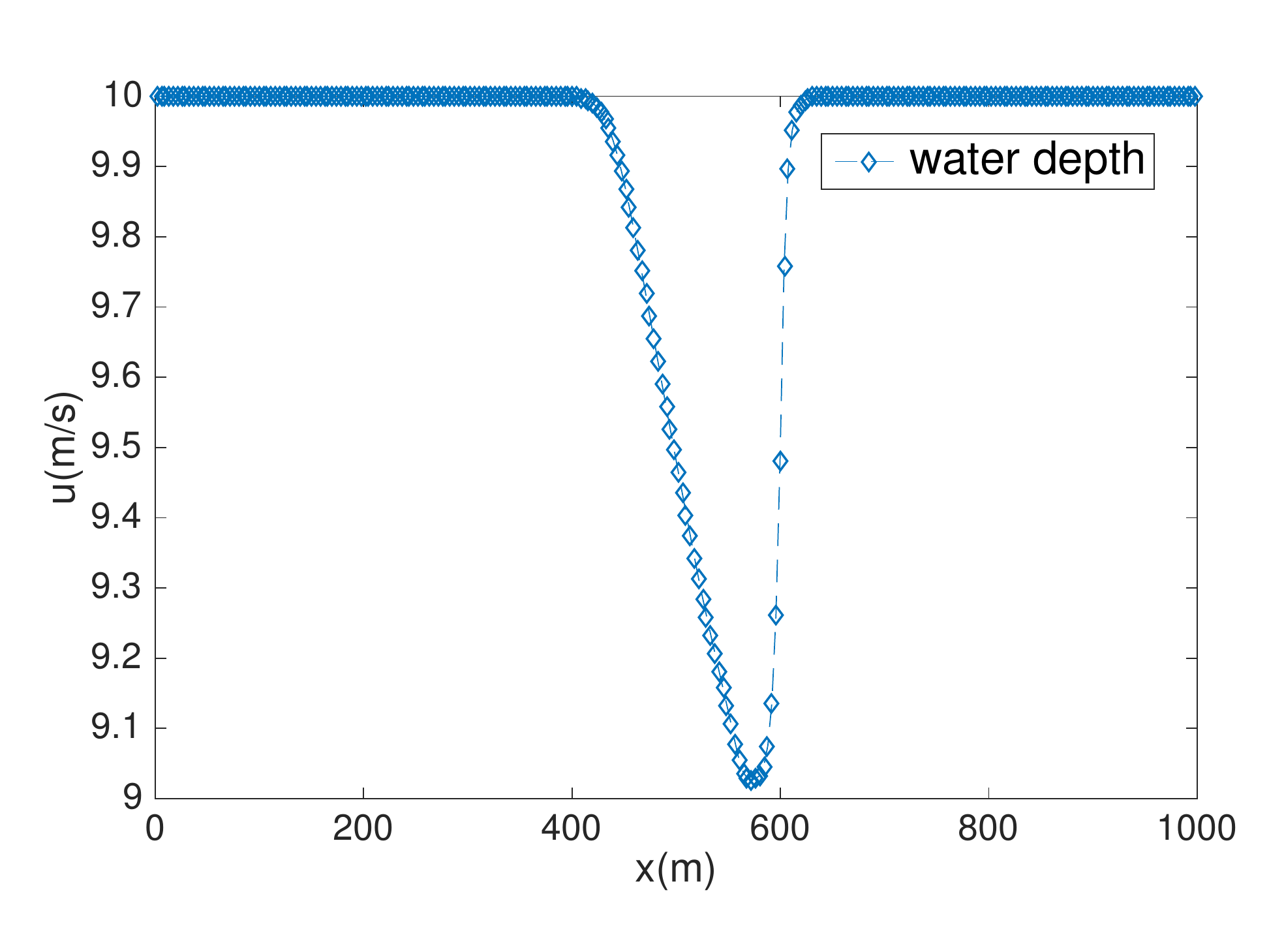}
}
\subfigure[Velocity of water at end time]{
    \includegraphics[width=0.45\textwidth]{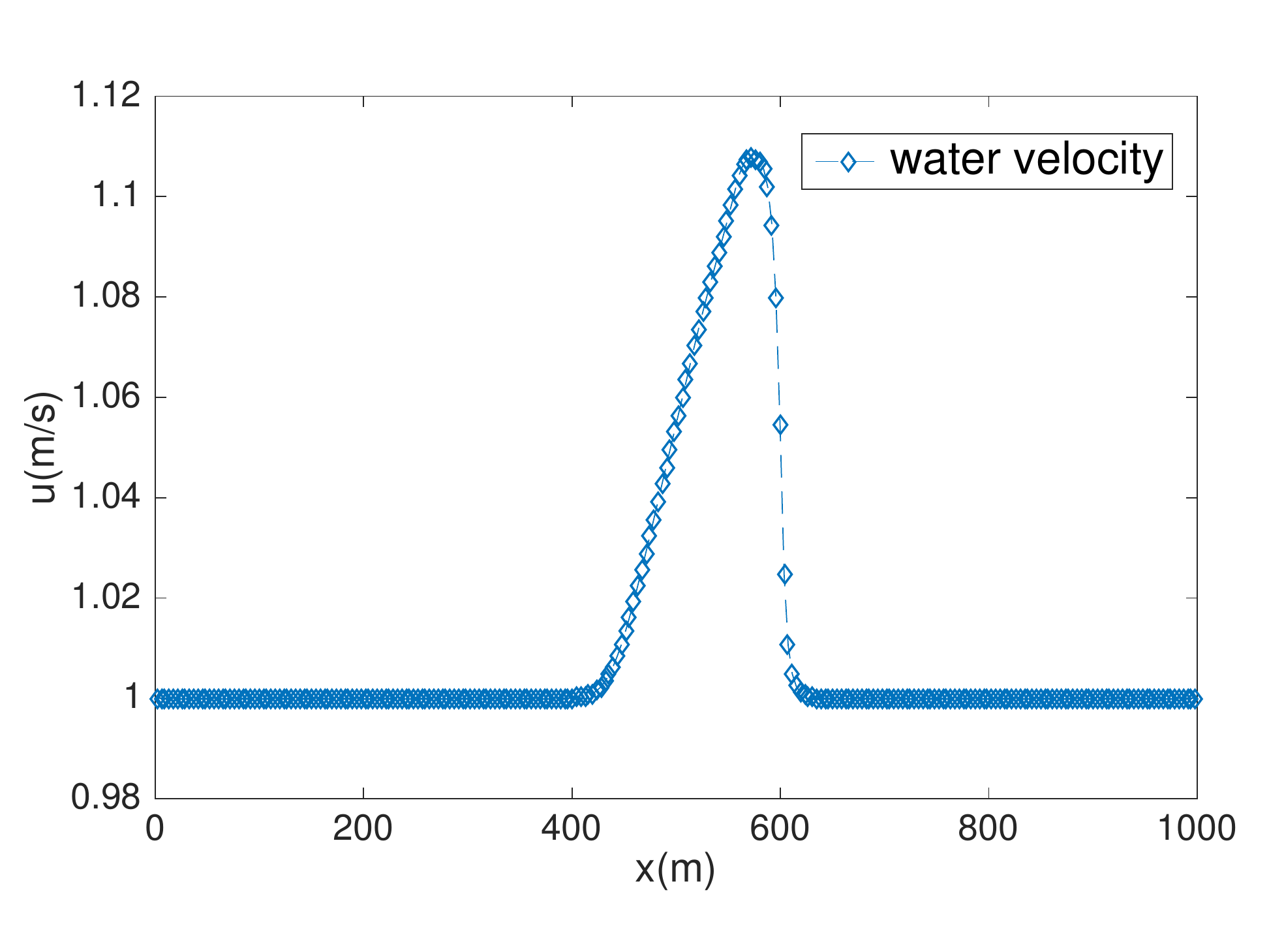}
}
\caption{Sampling results.}
\label{fig:water}
\end{figure}

\begin{figure}[!htb]
\centering
\includegraphics[width=1.0\textwidth]{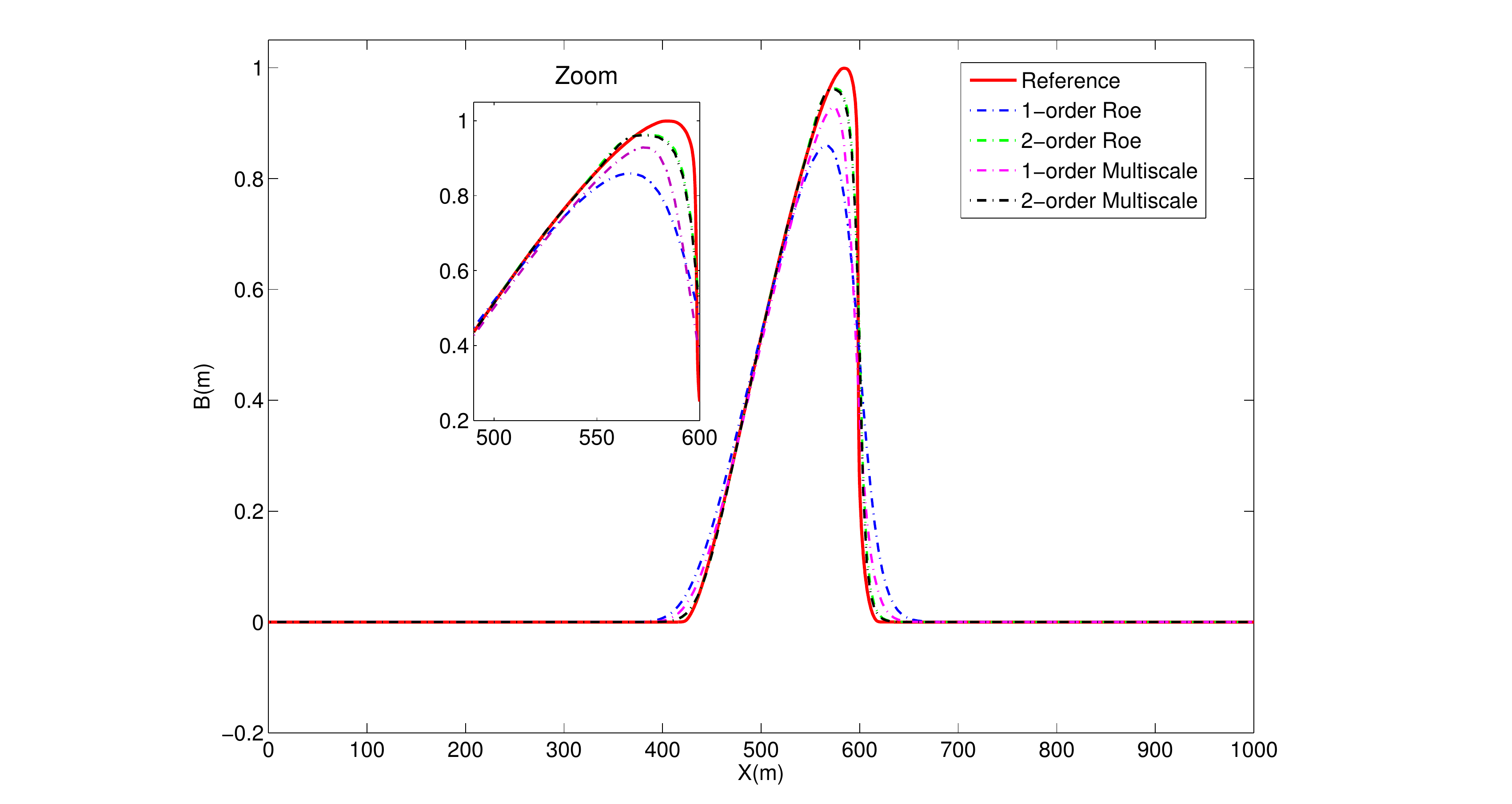}
\caption{Comparison of different methods when $N=256$, $T=238079\rs$. }
\label{fig:dune-schemes}
\end{figure}

\subsubsection{Meyer-Peter-M\"{u}ler Model} 
Figure \ref{fig:other-models} shows the
comparison between Grass model and Meyer-Peter-M\"{u}ler model when
$u_{cr}=0.5$, $1.0$, and $1.04$.  Here, all computations are carried
out using the second order multi-scale algorithms, with parameters the
same as above.
\begin{figure}[!htb]
\centering
\includegraphics[width=1.0\textwidth]{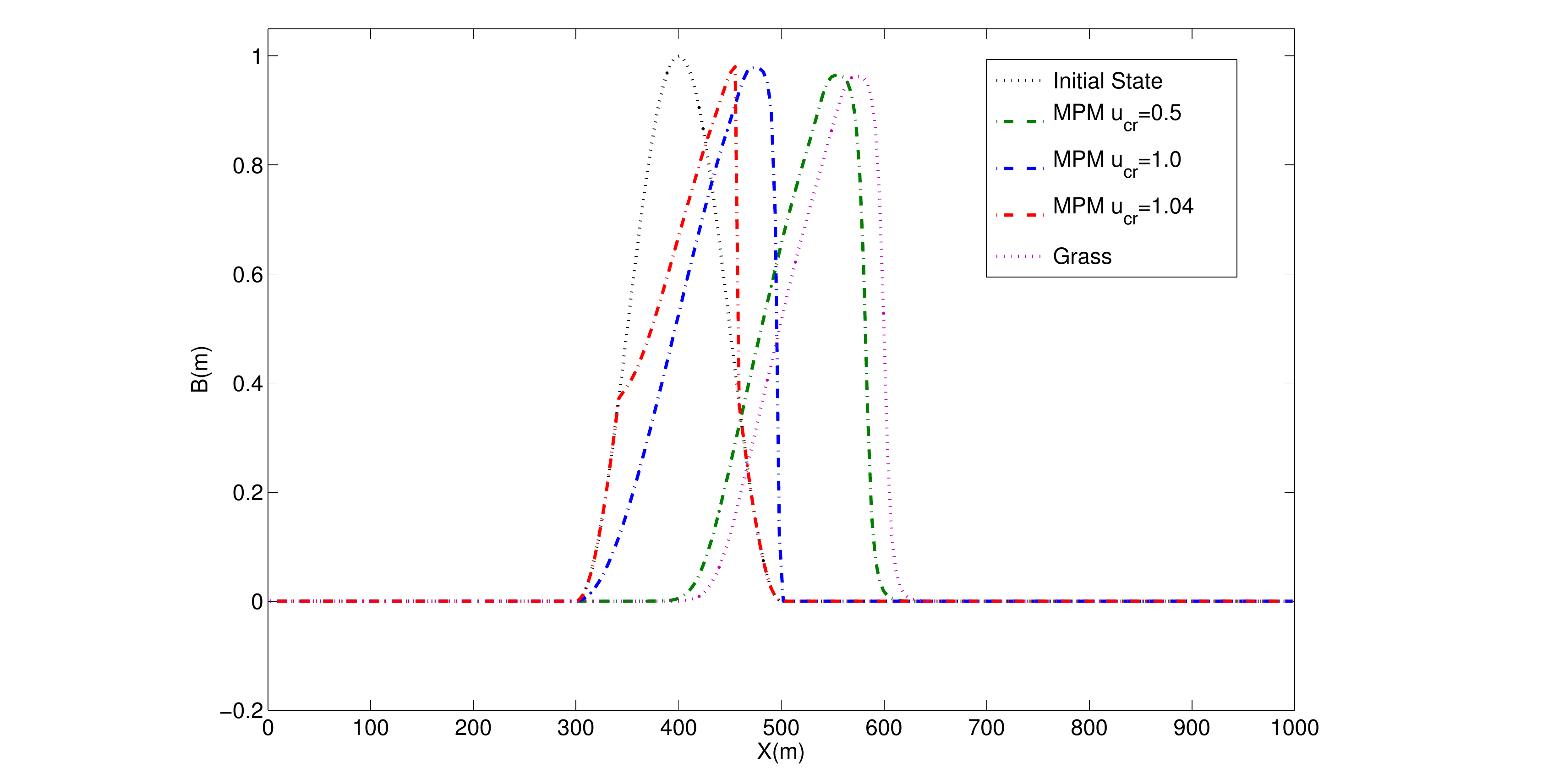}
\caption{Comparison between Grass model and Meyer-Peter-M\"{u}ler}
\label{fig:other-models}
\end{figure}

\subsubsection{Convergence results} 
Let us examine the convergence order of the multi-scale schemes. The
test will be based on the Grass model. The Roe's scheme
\cite{hudson2001numerical, hudson2005numerical} on an extremely fine
mesh with $16384$ grid points to is applied to produce the reference
solution. Due to the limitation of our computing capacity, the
computing time is comparatively short, says $T=90000\rs$. Actually,
the time $T = 150/\varepsilon$ is enough for convergence order
study. Here, we set $K = 1$ and compute using both the first order and
the second order algorithms. Besides, to study the effect of the
$\cO(\varepsilon)$ correction, we use the second order solver while
discarding the $\cO(\varepsilon)$ correction in the third test. 
The Table \ref{table:convergece-order} shows the convergence
order for each algorithm, $\hat{B}$ is the approximate solution and
$B^*$ is the reference solution. One can see our algorithm has
satisfactory convergence order, and the $\cO(\varepsilon)$ correction
is essential to improve the accuracy.

\begin{table}[!htb]
\centering
\begin{tabular}{c|c|c|c|c|c|c}
  \hline\hline
  &\multicolumn{2}{|c|}{first order} & \multicolumn{2}{|c|}{second order}
  &\multicolumn{2}{|c}{without $\varepsilon$ correction}\\
  \hline
  $N$ &$\lVert\hat{B}-B^*\rVert_1$ & order & $\lVert\hat{B}-B^*\rVert_1$ & order & $\lVert\hat{B}-B^*\rVert_1$ & order\\
  \hline
  128 & 7.05& &3.22 & &3.24 & \\  
  \hline
  256 & 3.68& 0.94&1.03 & 1.65&1.05 &1.63 \\  
  \hline
  512 & 1.88& 0.97&3.25e-1 & 1.66&3.57e-1& 1.55\\  
  \hline
  1024& 9.60e-1& 0.97&9.01e-2& 1.85&1.47e-1 &1.27 \\  
  \hline
  2048& 4.88e-1& 0.97&2.39e-2& 1.91&9.67e-2& 0.61\\  
  \hline\hline
\end{tabular}
\caption{Convergence order of different algorithms.}
\label{table:convergece-order}
\end{table}

\subsubsection{Computing time comparison} 
We will show the computing times with different $A_g$'s and mesh sizes
in this subsection. The ending time $T = 150/\varepsilon$, the
porosity constant is 0.4, and $K=2$ in the computations.  
For different cases that $A_g=0.01,0.005,0.001$ and $N=256,512$, Roe
scheme, the  first order multi-scale scheme and the second order
multi-scale scheme are tested. From the computing times shown in the
Table \ref{Time}, we can see that for different $A_g$'s, the computing
times of first order and second order scheme do not change a lot.
It's because the main computational cost attributes to solving the
steady states, which does not change a lot for different $A_g$'s.
These results demonstrate the efficiency of our multi-scale schemes,
especially when $A_g$ is small enough.
\begin{table}[!htb]
    \centering
    \begin{tabular}{c|c|c|c|c}
        \hline\hline
        $A_g$ & $N$ & Roe scheme & first order& second order\\
        \hline
        0.01 &  256 & 4.05 & 0.10 & 0.22\\
        \hline
        0.01 &  512 & 15.41 & 0.65 & 0.97\\
        \hline
        0.005 &  256 & 8.15 & 0.10 & 0.20\\
        \hline
        0.005 &  512 & 30.29 & 0.66 & 0.98\\
        \hline 
        0.001 & 256 & 39.15 & 0.10 & 0.20\\
        \hline
        0.001 & 512 & 152.01 & 0.65 &  0.99\\
        \hline\hline
    \end{tabular}
    \caption{Computing times (seconds) for different cases.}
    \label{Time}
\end{table}

\subsection{Two dimensional example}
This example has been studied in \cite{hudson2001numerical,
  hudson2005numerical, delis2008relaxation}. We adopt the 2D case
where the sediment transport takes place in a $1000m \times 1000m$
channel, with the initial dune profile as
\begin{equation} \label{equ:dune2d-B}
  B(x,y,0) = 
  \left\{ 
    \begin{array}{ll} 
      \sin^2(\frac{(x-300)\pi}{200}) \sin^2(\frac{(y-400)\pi}{200}), 
      & \text{if}~ 300\leq x\leq 500, 400\leq y \leq 600, \\
      0,  & \text{else}.
    \end{array} 
  \right.  
\end{equation}
The initial water surface level is $10$m everywhere with the uniformly
horizontal discharge $Q=10m^2/s$, namely 
\[
h(x,y,0) = 10 - B(x,y,0), \quad u(x,y,0) = \frac{Q}{h(x,y,0)}, \quad
v(x,y,0) = 0.
\]
In this test, the Grass model with $m=3$ is used. The porosity is 
$0.4$ and time scaling parameter $\varepsilon = 0.001/(1-0.4)$ to
coincide with the model in \cite{hudson2001numerical}.

When solving the steady state, we fix the discharge of $x$-direction
to be $Q=10m^2/s$ at the upstream boundary, and the transmissive
boundary condition is applied to the downstream boundary.  The
reflective boundary condition is adopted on the both sides of the
channel. We also use the flux-limited Roe scheme
\cite{hudson2001numerical,hudson2005numerical} to solve the steady
state. As with 1D case, we solve the shallow water 
equations until the residual is less than $10^{-6}$ or the 
iteration number is bigger than 20000, and then the result is 
approximated to be the steady state.
 We compute this
channel test problem using the second order multi-scale method until
$T=3.6\times10^5$s on a $128\times128$ mesh. The CFL number is set to 
be $0.5$ and $K$ is set
to be $2$. When solving the correction terms,  we use the reflective 
boundary condition on the $y=0,1000m$, and use the zero boundaries
condition on $x=0,1000m$. As with 1D case, the
BiCGSTAB solver with SSOR preconditioner is used to solve the correction 
terms.  The tolerance of the BiCGSTAB solver is $10^{-6}$ and the
relaxation parameter of SSOR preconditioner is $0.955$. 
\begin{figure}[!htb]
\centering
\subfigure[Riverbed at initial time]{
    \includegraphics[width=0.45\textwidth]{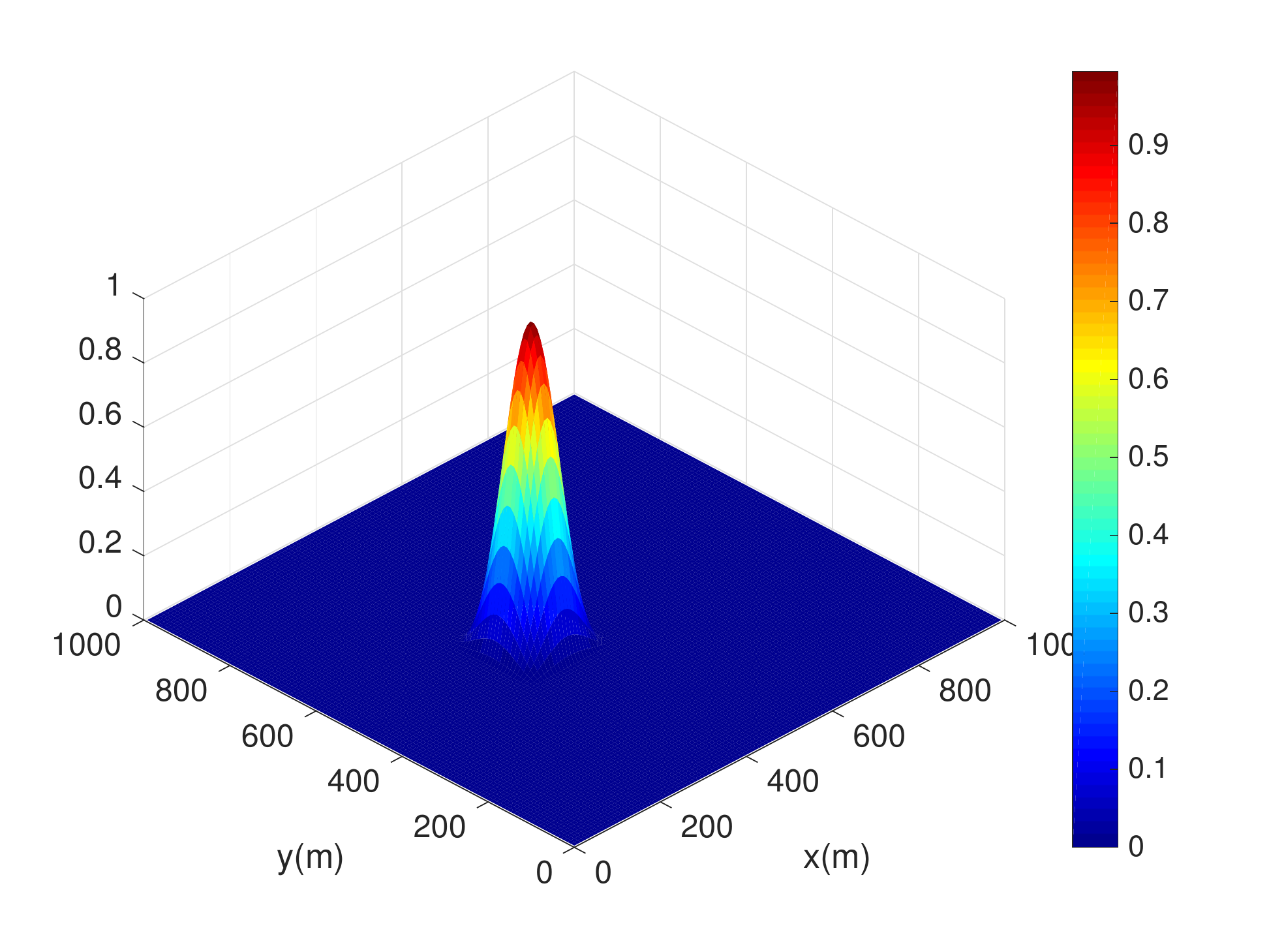}
}
\subfigure[Riverbed at end time]{
    \includegraphics[width=0.45\textwidth]{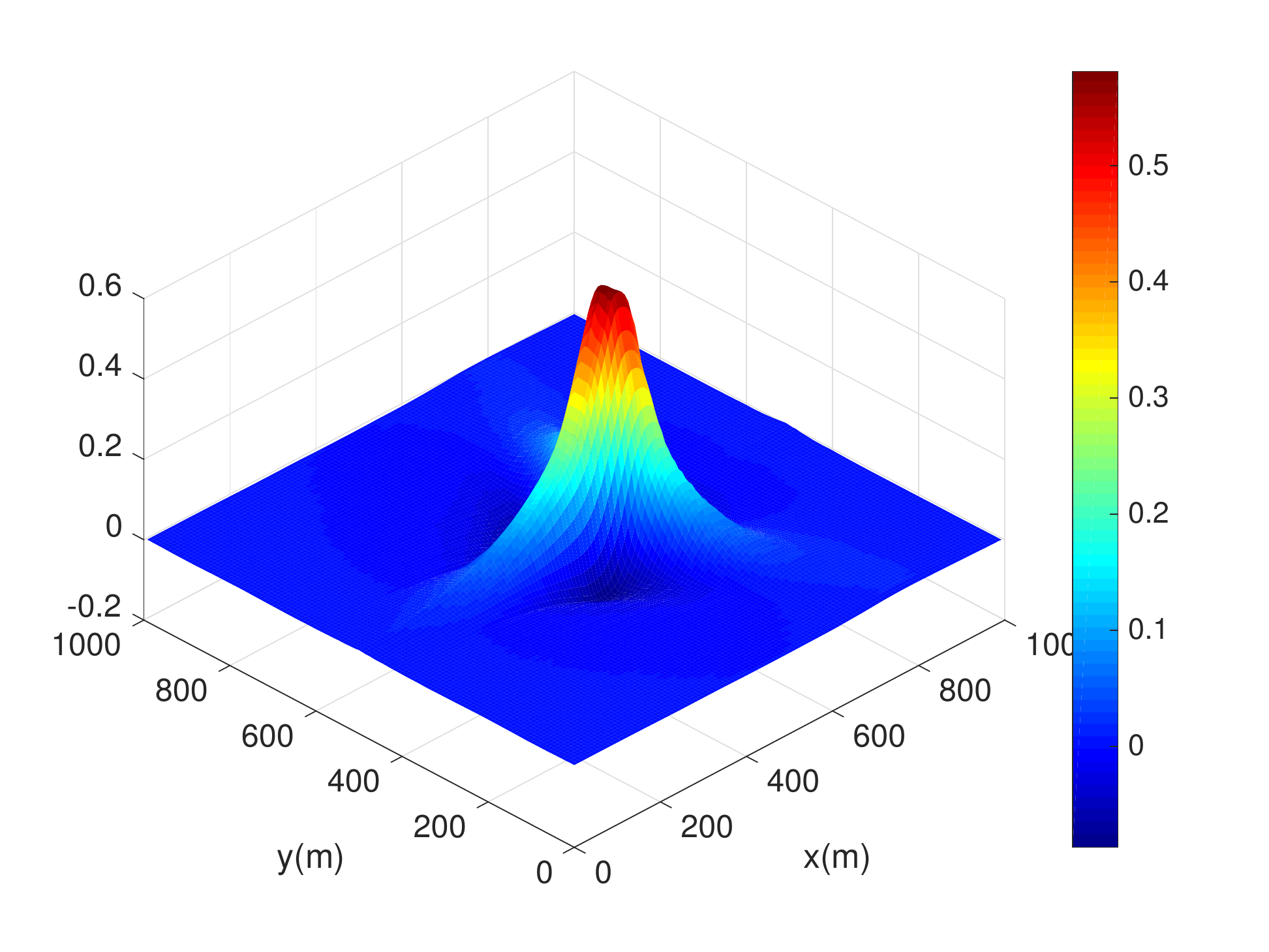}
}
\subfigure[Top view of riverbed at initial time]{
    \includegraphics[width=0.45\textwidth]{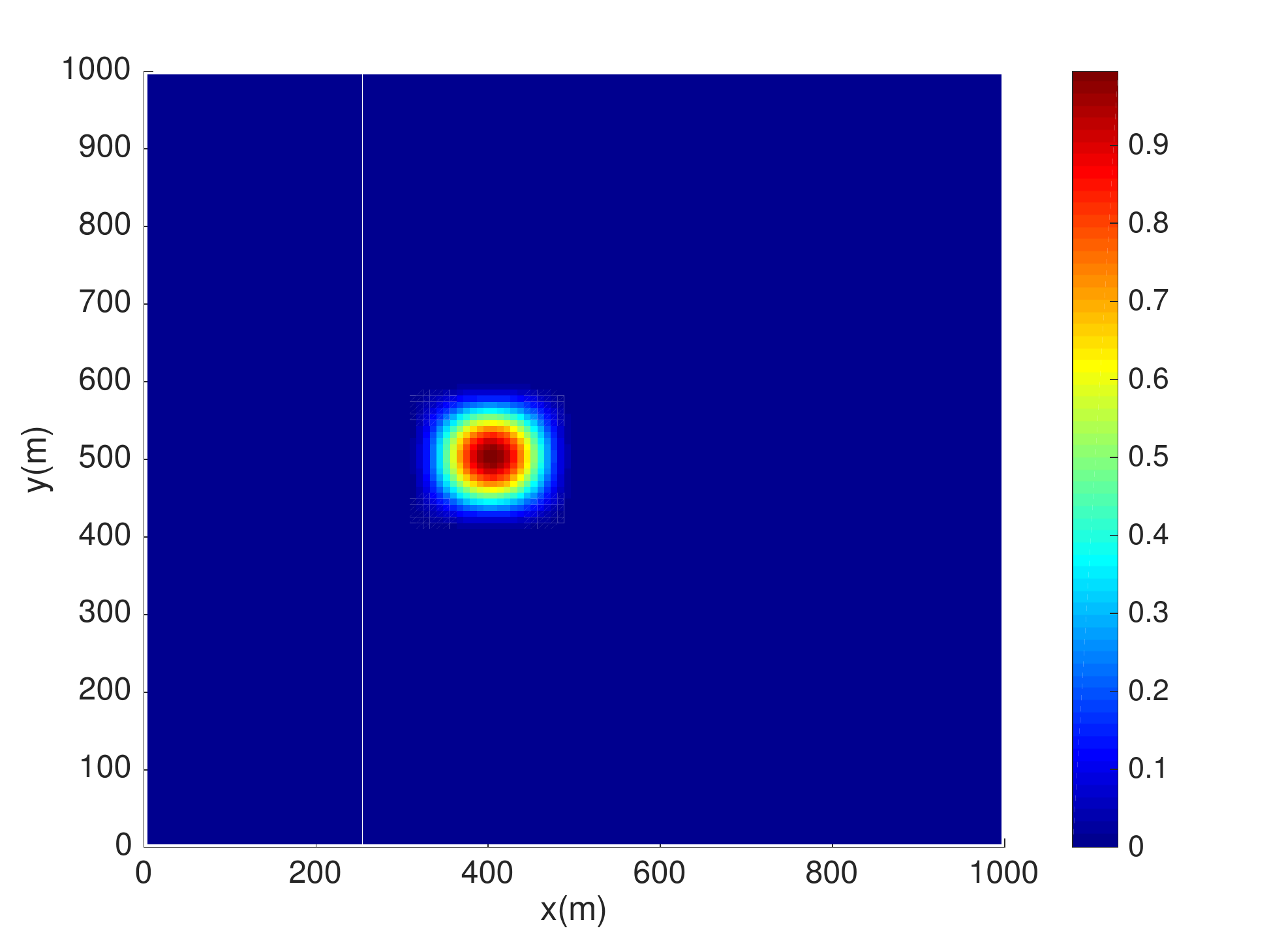}
}
\subfigure[Top view of riverbed at end time]{
    \includegraphics[width=0.45\textwidth]{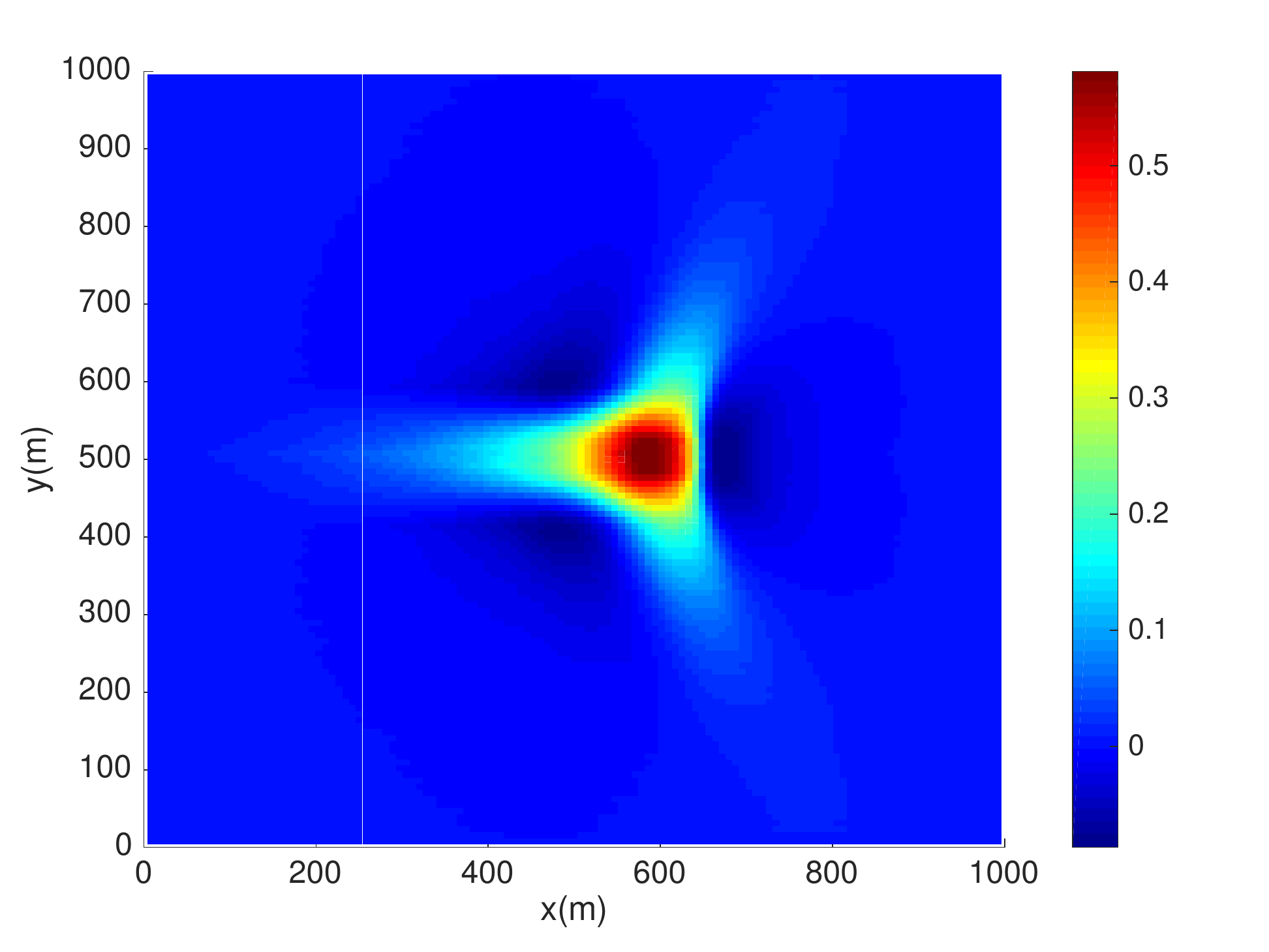}
}
\caption{Numerical results of riverbed.}
\label{fig:2dbed}
\end{figure}

\begin{figure}[!htb]
\centering
\subfigure[Top view of $u$ at initial time]{
    \includegraphics[width=0.45\textwidth]{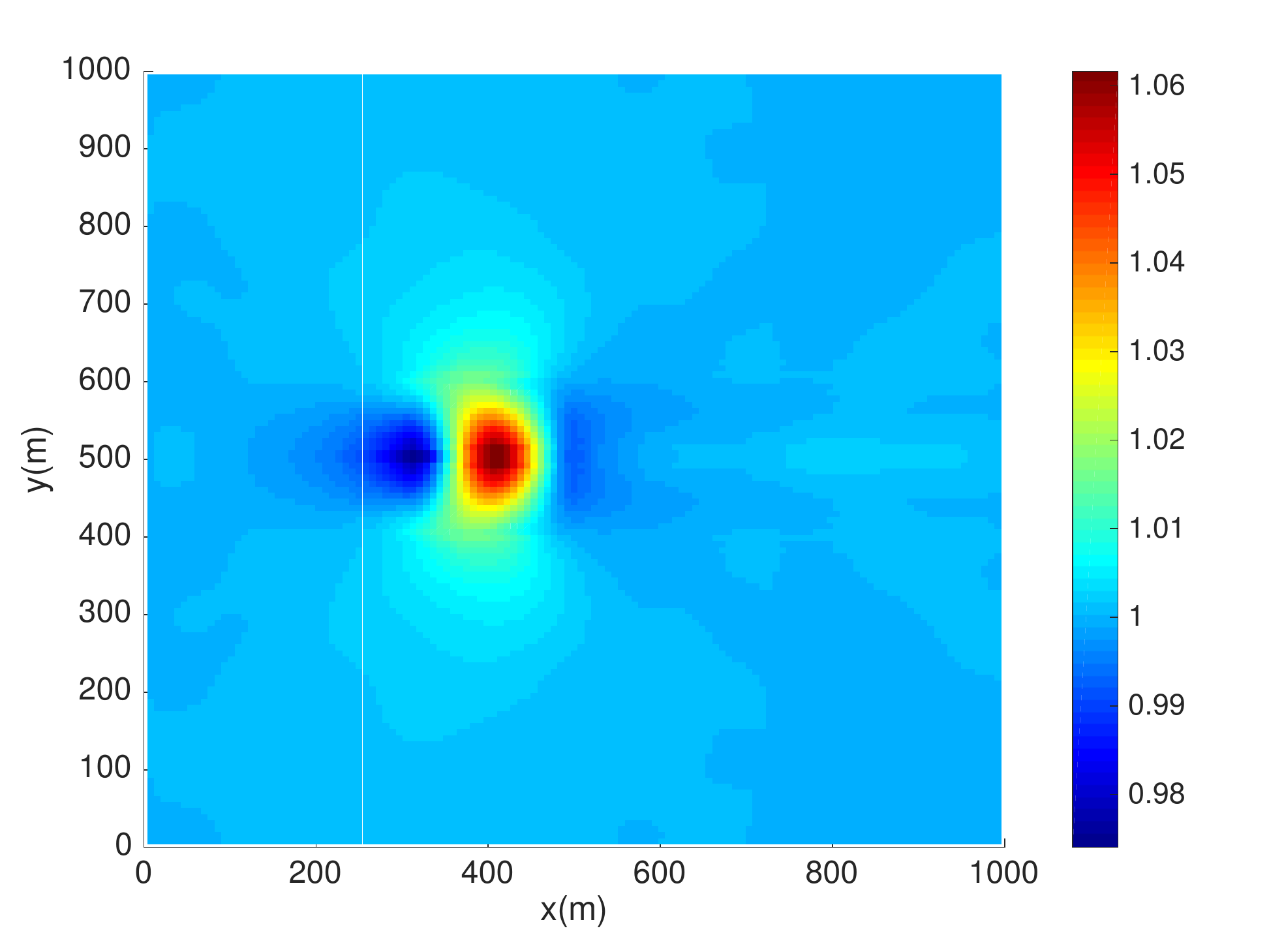}
}
\subfigure[Top view of $u$ at end time]{
    \includegraphics[width=0.45\textwidth]{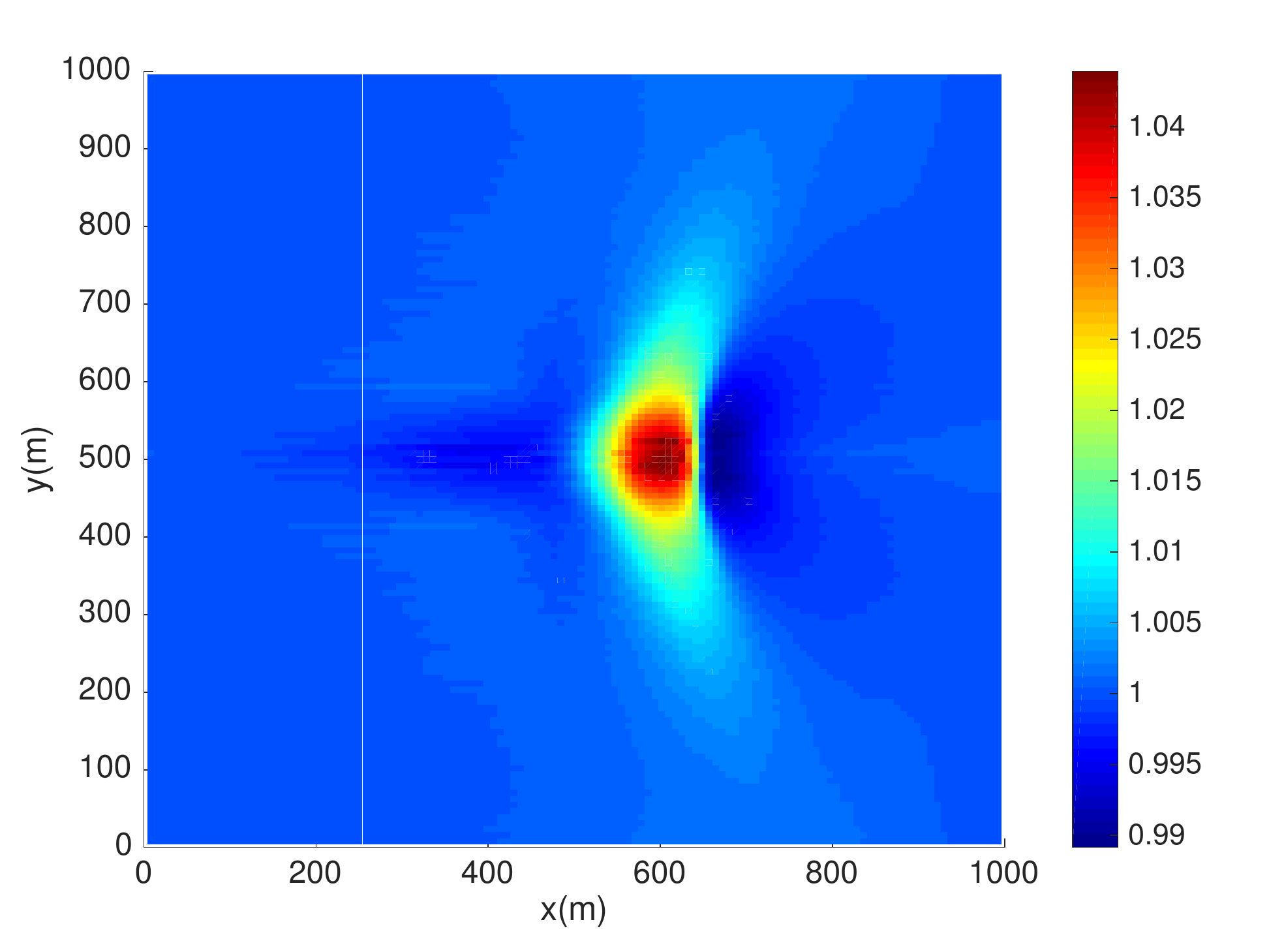}
}

\subfigure[Top view of $v$ at initial time]{
\includegraphics[width=0.45\textwidth]{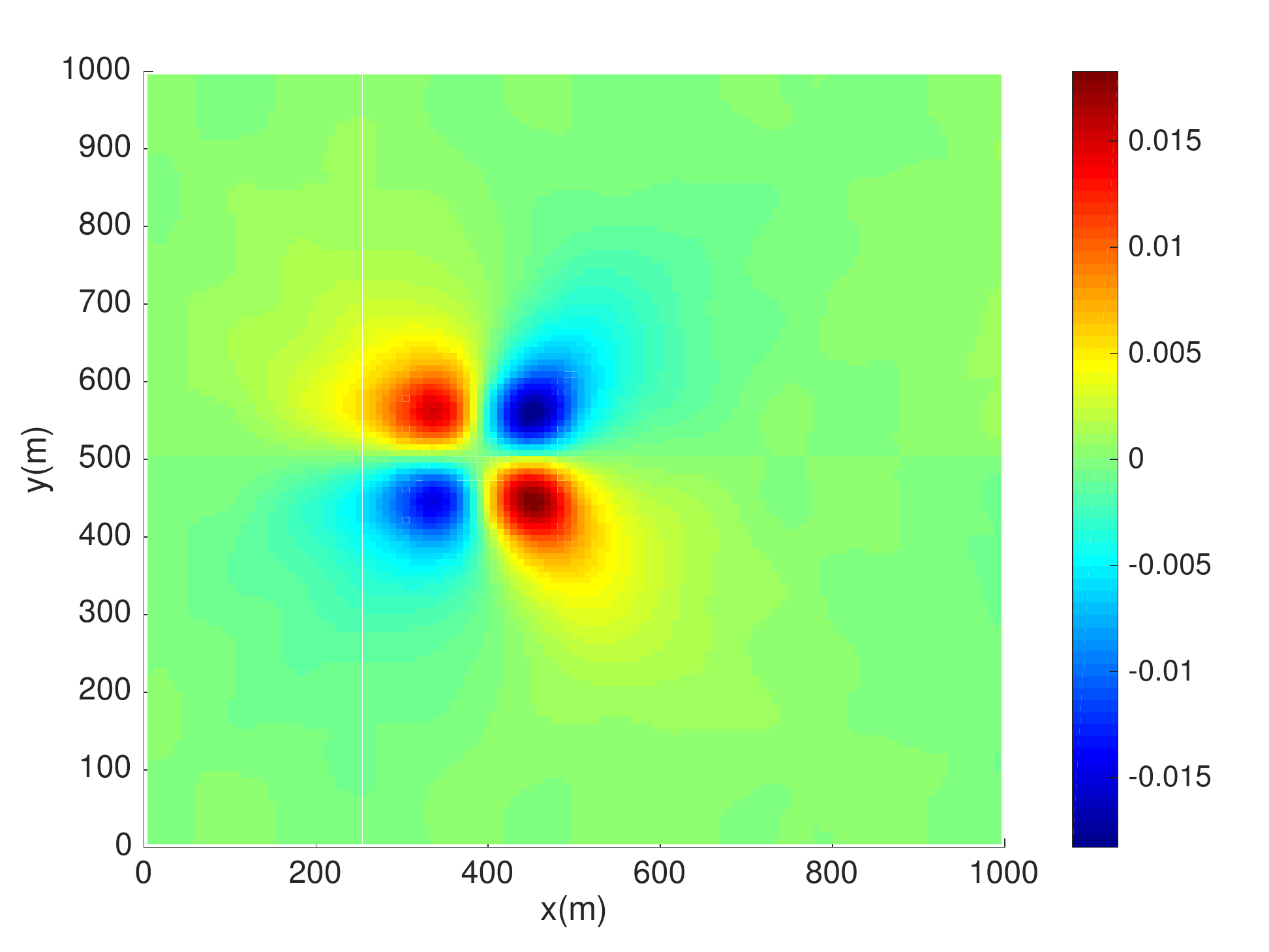}
}
\subfigure[Top view of $v$ at end time]{
    \includegraphics[width=0.45\textwidth]{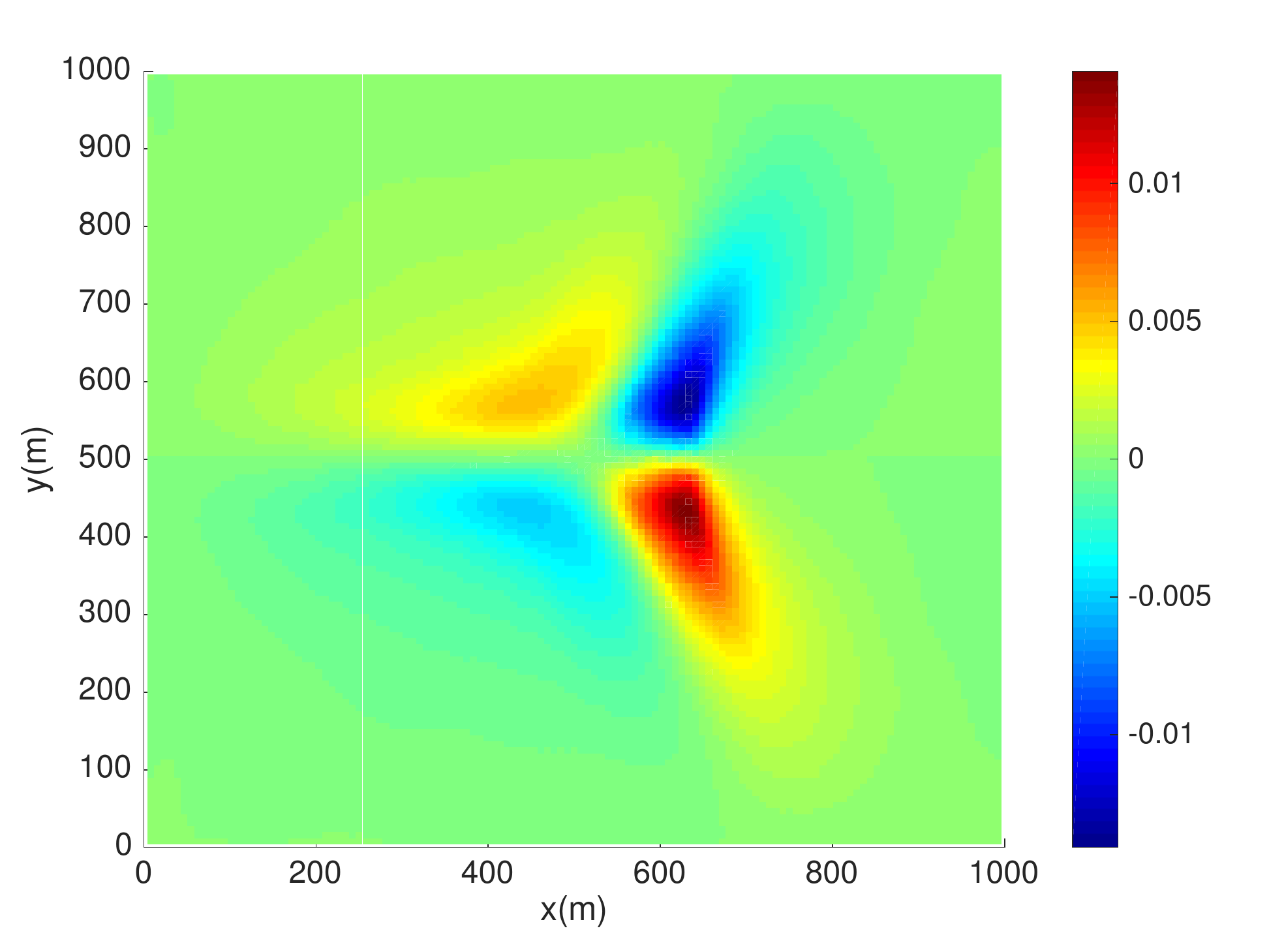}
}
\caption{Sampling results of water velocity.}
\label{fig:2dvelocity}
\end{figure}

\begin{figure}[!htb]
\centering
\includegraphics[width=0.7\textwidth]{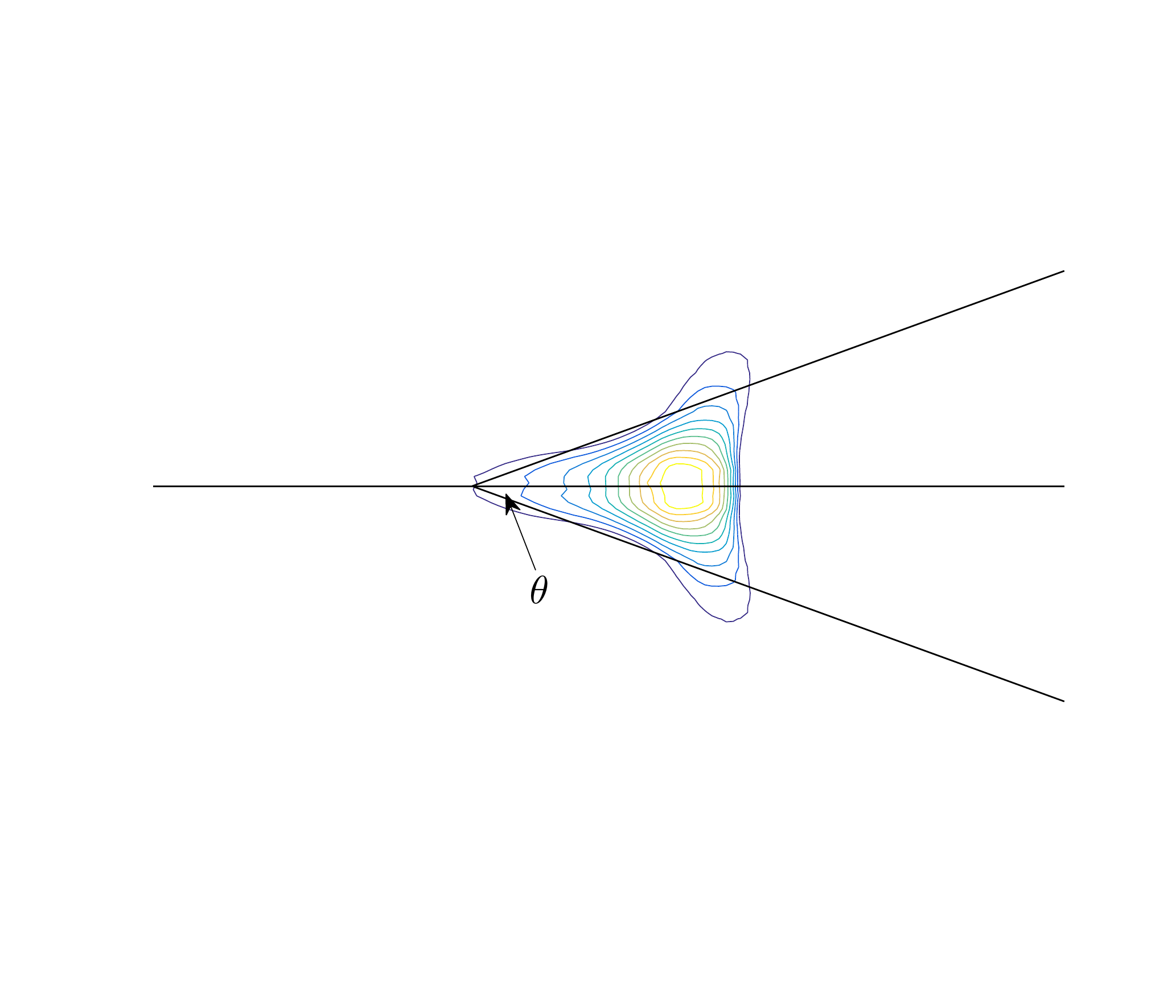}
\caption{Spread angle.}
\label{fig:spreadangle}
\end{figure}

Figure \ref{fig:2dbed} shows the riverbed at initial time and end
time. The steady state of velocities are shown in Figure
\ref{fig:2dvelocity}.  As shown in \cite{hudson2001numerical,
hudson2005numerical, delis2008relaxation}, the initial dune will
gradually deform to a star-shaped pattern. From the Figure
\ref{fig:2dbed} one sees clearly that our method captures correctly
such behaviors. 

More precisely, the spread angle of the riverbed is important to show whether 
our model and scheme work \cite{de2009energetically,de19872dh}.
Assume that the interaction between the sediment layer and flow is
low, the following approximation of the spread angle is proposed by De
Vriend \cite{de19872dh}
\[
    \tan \theta=\frac{3\sqrt{3}(m-1)}{9m-1}.
\]
For the case in which $m=3$, the angle is approximately
$21.78^{\circ}$. Figure \ref{fig:spreadangle} shows the contour of
the riverbed at the ending time and also the angle
$\theta=21.78^\circ$. From the figure, we observe that the spread
angle of our scheme is very approximately to the one derived by De
Vriend.

At last, the computing time of our multi-scale method is $492$s, which
is much less than that of the flux-limited Roe's method
\cite{hudson2001numerical,hudson2005numerical} ($16806$s).

\section{Conclusions} \label{sec:conclusion}

In this paper, a second order time homogenized model and the
corresponding numerical methods for the sediment transport are
proposed. Through the numerical experiments, the multi-scale method
shows significant effectiveness, especially for the long time
simulation of sediment transport while provides a considerably good
approximation to the coupled system.

\section*{Acknowledgment}
This is a succeeding research of a project supported by
ExxonMobile. The authors appreciate the financial supports provided by
the \emph{National Natural Science Foundation of China (NSFC)} (Grant
91330205 and 11325102).

\appendix
\section{Proofs of Lemma \ref{prop:1-conv}}
\begin{proof}[Proof of Lemma \ref{prop:1-conv}]\label{proof:1-conv}
%
%
  Let $\lambda_k^\varepsilon=\lambda_k+ \varepsilon\hat{\lambda}_k$
  be the $k$-th eigenvalue of $\bC^\varepsilon$. Then, we have  
  \[
    \begin{aligned}
      &\big[(\bX+\varepsilon\hat{\bX})\bU+(\bLambda^{-1}\bX
          \bg+\varepsilon \hat{\balpha})B\big]_k(x,t) \\ =~ &
      \big[(\bX+\varepsilon\hat{\bX})\bU + (\bLambda^{-1}\bX
          \bg+\varepsilon\hat{\balpha})B\big]_k(x-
            \lambda_k^\varepsilon t,0).
    \end{aligned}
  \]
  Taking the spatial derivative to obtain
  \begin{equation}\label{equ:Ux-Bx}
    \begin{aligned}
      & \big(\bX\bU_x+\bLambda^{-1}\bX \bg B_x\big)_k(x,t)
      = \big(\bX\bU_x+\bLambda^{-1}\bX \bg B_x\big)_k(x-
      \lambda_k^\varepsilon t,0) \\ & + \varepsilon\big(\hat{\bX}\bU_x +
      \hat{\balpha}B_x\big)_k(x-\lambda_k^\varepsilon t,0)
      - \varepsilon\big(\hat{\bX}\bU_x+\hat{\balpha}B_x\big)_k(x,t).\\
    \end{aligned}
  \end{equation}
  Since 
  \[
    \bX\bU_x+\bLambda^{-1}\bX \bg B_x = \bLambda^{-1}\bX(\bA\bU_x+\bg B_x),
  \]
  and the initial state of fast variables, we get
  \[
    \big(\bX\bU_x+\bLambda^{-1}\bX \bg B_x\big)_k(x-
        \lambda_k^\varepsilon t,0) = \cO(\varepsilon).
  \]
  It follows from the boundedness of $\bU_x,B_x$ (one can check directly by
  the boundedness of $(\bK^\varepsilon)^{-1}$) that
  \[
  \lVert\bX \bU_x+\bLambda^{-1}\bX\bg B_x\rVert_\infty=\cO(\varepsilon),
  \]
  which implies 
  \begin{equation} \label{equ:derivative-error}
  \lVert \bA \bU_x + \bg B_x\rVert_\infty=\cO(\varepsilon)
  \quad \text{or} \quad 
  \lVert \bU_\tau \rVert_\infty = \cO(1).
  \end{equation}

  Let $E=B-B^{(0)}$. By subtracting \eqref{equ:linear-0} from
  \eqref{equ:linear-0tau}, we have
  \[
  E_\tau +\lambda^{(0)}_BE_x = \varepsilon \bc^T\bA^{-1}\bU_\tau \qquad
  E(x,0)=0.
  \]
  Thus, 
  \[
  E(x,\tau) = \varepsilon\int_{0}^{\tau} \bc^T\bA^{-1}\bU_\tau
  (x-\lambda^{(0)}_B(\tau-s),s)\mathrm{d} s.
  \]
  Finally, we have
  \[
    \lVert E\rVert_\infty \le C_1 \varepsilon \int_0^{\tau}\lVert
    \bU_\tau \rVert_\infty\mathrm{d}s \le C_2 \varepsilon\tau.
  \]
  If $B(x,0) \in W^{2,\infty}(\R)$, then we can take the spatial
  derivative again to \eqref{equ:Ux-Bx} to obtain 
  \begin{equation}\label{equ:Uxx-Bxx}
    \begin{aligned}
      & \big(\bX\bU_{xx}+\bLambda^{-1}\bX \bg B_{xx}\big)_k(x,t)
      = \big(\bX\bU_{xx}+\bLambda^{-1}\bX \bg B_{xx}\big)_k(x -
      \lambda_k^\varepsilon t,0) \\ & +
      \varepsilon\big(\hat{\bX}\bU_{xx} +
          \hat{\balpha}B_{xx}\big)_k(x-\lambda_k^\varepsilon t,0) -
      \varepsilon\big(\hat{\bX}\bU_{xx}+\hat{\balpha}B_{xx}\big)_k(x,t),\\
    \end{aligned}
  \end{equation}
  which yields 
  \[
  \|\bA \bU_{xx} + \bg B_{xx}\|_{\infty} = \cO(\varepsilon), \quad 
  \text{or} \quad \|\bU_{\tau x}\|_{\infty} = \cO(\varepsilon),
  \]
  by the similar argument. Then 
  \[
  (E_x)_{\tau} + \lambda_B^{(0)}(E_x)_x = \varepsilon \bc^T \bA^{-1}
  \bU_{\tau x} \qquad E_x(x,0) = 0,
  \]
  which means that  
  \[
  \|E_x\|_{\infty} \le C_1 \varepsilon \int_0^{\tau} \|\bU_{\tau
    x}\|_{\infty} \rd s \le C_2 \varepsilon \tau.
  \]
  This ends the proof.
\end{proof}

\section{Proof of Lemma \ref{lemma:2-conv-1}}
\begin{proof}[Proof of Lemma \ref{lemma:2-conv-1}]\label{proof:2-conv-1}
From \eqref{decom-0}, we have
\[
    \bK^\varepsilon \bC^\varepsilon=\bD^\varepsilon\bK^\varepsilon,
\]
which can be written as  
\[
    \begin{pmatrix} \bX+\varepsilon\hat{\bX} &
    \bLambda^{-1}\bX \bg+\varepsilon\hat{\balpha} \\
    \varepsilon\hat{\bbeta}^T & 1+\varepsilon\hat{\theta} 
    \end{pmatrix}
    \begin{pmatrix}
        \bA & \bg\\
        \varepsilon\bc^T & 0
    \end{pmatrix}
    =\begin{pmatrix}
        \bLambda + \varepsilon\hat{\bLambda} & 0\\
        0 & \varepsilon\mu
    \end{pmatrix}
    \begin{pmatrix} \bX+\varepsilon\hat{\bX} &
    \bLambda^{-1}\bX \bg+\varepsilon\hat{\balpha} \\
    \varepsilon\hat{\bbeta}^T & 1+\varepsilon\hat{\theta} 
    \end{pmatrix}.
\]
By expanding this equation, we have
\[
    \begin{aligned}
        (\bX+\varepsilon\hat{\bX})\bA+\varepsilon(\bLambda^{-1}\bX\bg+
        \varepsilon\hat{\balpha})\bc^T&=(\bLambda+
        \varepsilon\hat{\bLambda})(\bX+\varepsilon\hat{\bX}),\\
        (\bX+\varepsilon\hat{\bX})\bg&=(\bLambda+
        \varepsilon\hat{\bLambda})(\bLambda^{-1}\bX\bg+\varepsilon\hat{\balpha}).
    \end{aligned}
\]
By collecting the $\cO(\varepsilon)$ term we have 
\[
\left\{
\begin{aligned}
\hat{\bX}\bg &= \bLambda \hat{\balpha} + \hat{\bLambda}\bLambda^{-1}
\bX \bg + \cO(\varepsilon), \\
\hat{\bX}\bA + \bLambda^{-1}\bX\bg\bc^T &= \bLambda \hat{\bX} +
\hat{\bLambda} \bX+\cO(\varepsilon).
\end{aligned}
\right.
\]
Or 
\begin{equation} \label{equ:perb-0}
\left\{
\begin{aligned}
\hat{\balpha} &= \bLambda^{-1}\hat{\bX}\bg -
\hat{\bLambda}\bLambda^{-2}\bX \bg+\cO(\varepsilon), \\
\bLambda^{-1}\bX\bg\bc^T &= \bLambda \hat{\bX} +
\hat{\bLambda} \bX - \hat{\bX}\bX^{-1}\bLambda \bX+\cO(\varepsilon).
\end{aligned}
\right.
\end{equation}

In light of the proof of Lemma \ref{prop:1-conv}, it suffices to show
that 
\begin{equation}\label{cond:lemma:2-1}
\lVert \bU^{(1)}_\tau - \bU_\tau\rVert_\infty = \cO(\varepsilon).
\end{equation}
Note that $\bU^{(1)}$ and $B^{(0)}$ satisfy 
\begin{equation}\label{equ:eps1-form1}
\begin{pmatrix}\bU^{(1)} \\ B^{(0)} \\ 
\end{pmatrix}_t + 
\begin{pmatrix} \bA & \bg\\ 0 &
-\varepsilon\bc^T\bA^{-1}\bg \\ 
\end{pmatrix}
\begin{pmatrix}\bU^{(1)} \\B^{(0)} \\
\end{pmatrix}_x=0.
\end{equation}
Let 
\[
\bC_1^\varepsilon =\begin{pmatrix} \bA & \bg\\ 0 & -\varepsilon\bc^T\bA^{-1}\bg 
\\ \end{pmatrix}.
\]
Again by the perturbation theory \cite{wilkins1965eigenvalue},
$\bC^\varepsilon_1$ has the following decomposition
\begin{equation} \label{decom-1}
\bC^\varepsilon_1 = (\bK^\varepsilon_1)^{-1} \bD_1^\varepsilon
 \bK^\varepsilon_1,
\end{equation}
where 
\[
\bK^\varepsilon_1 = 
\begin{pmatrix}\bX+\varepsilon\hat{\bX}_1 & \bLambda^{-1}
\bX\bg + \varepsilon\hat{\balpha}_1\\ \varepsilon\hat{\bbeta}_1^T &
1+\varepsilon\hat{\theta}_1 
\end{pmatrix} \qquad
\bD_1^\varepsilon=\begin{pmatrix} \bLambda & 0\\
0 & -\varepsilon\bc^T\bA^{-1}\bg\\ 
\end{pmatrix}.
\]
Collecting the $\cO(\varepsilon)$ terms in \eqref{decom-1}, we have
\begin{equation}\label{equ:X-rela-1}
\hat{\bX}_1=\bX \qquad 
\bLambda \hat{\balpha}_1 =
\bX\bg-\bLambda^{-1}\bX\bg\bc^T\bX^{-1}\bLambda^{-1}\bX\bg \qquad 
\hat{\bbeta}_1 = \bzero.
\end{equation}

Comparing with \eqref{equ:Ux-Bx} in Lemma \ref{prop:1-conv}, and from
\eqref{equ:X-rela-1} and the initial condition, we have 
\[
\begin{aligned}
& \left[ \bX(\bU - \bU^{(1)})_x + \bLambda^{-1}\bX\bg (B-B^{(0)})_x
\right]_k (x,t) \\
=~ & \left[(\bX+\varepsilon\hat{\bX})\bU_x + (\bLambda^{-1}\bX\bg +
    \varepsilon \hat{\balpha})B_x \right]_k (x-\lambda_k^\varepsilon t,
      0) \\
 & -\left[(\bX+\varepsilon\hat{\bX}_1) \bU^{(1)}_x +
  (\bLambda^{-1}\bX\bg + \varepsilon \hat{\balpha}_1)
  B^{(0)}_x\right]_k (x - \lambda_k t, 0) \\
& -\varepsilon (\hat{\bX}\bU_x + \hat{\balpha}B_x)_k(x,t) +
\varepsilon (\hat{\bX}_1\bU^{(1)}_x + \hat{\balpha}_1B_x^{(0)})_k(x,t)
+ \cO(\varepsilon^2) \\
=~& -\varepsilon (\hat{\bX}\bU_x + \hat{\balpha}B_x)_k(x,t) +
\varepsilon \hat{\bX}_1\bU^{(1)}_x + \hat{\balpha}_1B_x^{(0)})_k(x,t)
+ \cO(\varepsilon^2) \\
=~& \varepsilon(\bX \bU_x^{(1)} + \bLambda^{-1}\bX \bg
    B_x^{(0)})_k(x,t) + \varepsilon
(\bLambda^{-2}\bX\bg\bc^T\bX^{-1}\bLambda^{-1}\bX \bg)_k(B_x -
    B_x^{(0)})(x,t) \\
 &-\varepsilon \left[ 
 \hat{\bX}\bU_x + (\hat{\balpha} +
     \bLambda^{-2}\bX\bg\bc^T\bX^{-1}\bLambda^{-1} \bX\bg)B_x\right]_k(x,t)
  + \cO(\varepsilon^2)
\end{aligned}
\]
By the similar technique as Lemma \ref{prop:1-conv}, we can prove 
\[
\|\bX\bU_x^{(1)} + \bLambda^{-1}\bX\bg B_x^{(0)}\|_{\infty} =
\cO(\varepsilon).
\]
And Lemma \ref{prop:1-conv} proves that $\|B_x - B_x^{(0)}\|_{\infty}
= \cO(\varepsilon)$. In light of the \eqref{equ:perb-0}, we have the
following estimate for the last part:
\[
\begin{aligned}
& \hat{\bX}\bU_x + (\hat{\balpha} +
    \bLambda^{-2}\bX\bg\bc^T\bX^{-1}\bLambda^{-1} \bX\bg)B_x \\
=~ & \hat{\bX}\bA^{-1}(\bA\bU_x + \bg B_x) + (\hat{\balpha} +
    \bLambda^{-2} \bX \bg\bc^T\bX^{-1}\bLambda^{-1} \bX\bg -
    \hat{\bX}\bX^{-1}\bLambda^{-1}\bX\bg)B_x \\
=~ & \bLambda^{-1}(\hat{\bX} - \bLambda^{-1}\hat{\bLambda}\bX +
    \bLambda^{-1}\bX\bg\bc^T\bX^{-1}\bLambda^{-1} \bX -
    \bLambda\hat{\bX}\bX^{-1}\bLambda^{-1}\bX)\bg B_x +
\cO(\varepsilon) \\
=~ & \bLambda^{-1}\left[ 
\hat{\bX} - \bLambda^{-1}\hat{\bLambda}\bX + (\bLambda\hat{\bX} +
    \hat{\bLambda}\bX - \hat{\bX}\bA)\bX^{-1}\bLambda^{-1} \bX -
\bLambda\hat{\bX}\bX^{-1}\bLambda^{-1}\bX
\right]\bg B_x + \cO(\varepsilon) \\
=~& \cO(\varepsilon).
\end{aligned}
\]
Here, we have used the fact that $\hat{\bLambda}$ is diagonal.
From the above, we finally obtain 
\[
\lVert \bX (\bU-\bU^{(1)})_x+\bLambda^{-1}\bg
(B-B^{(0)})_x\rVert_\infty=\cO(\varepsilon^2),
\]
which implies that 
\[
\lVert \bA (\bU-\bU^{(1)})_x+\bg
(B-B^{(0)})_x\rVert_\infty=\cO(\varepsilon^2).
\]
Thus, we have
\[
\lVert \bU_\tau-\bU^{(1)}_\tau\rVert_\infty=\cO(\varepsilon).
\]
This completes the proof.
\end{proof}

\bibliographystyle{unsrt}
\bibliography{sediment.bib}

\begin{thebibliography}{10}

\bibitem{wilcock2009sediment}
Peter~R Wilcock, John Pitlick, Yantao Cui, et~al.
\newblock Sediment transport primer: estimating bed-material transport in
  gravel-bed rivers.
\newblock 2009.

\bibitem{cunge1980practical}
J.A. Cunge, F.M. Holly, and A.~Verwey.
\newblock {\em Practical aspects of computational river hydraulics}.
\newblock Pitman, 1980.

\bibitem{simons1992sediment}
Daryl~B Simons and Fuat {\c{S}}ent{\"u}rk.
\newblock {\em Sediment transport technology: water and sediment dynamics}.
\newblock Water Resources Publication, 1992.

\bibitem{grass1981sediment}
A.J. Grass.
\newblock Sediment transport by waves and currents.
\newblock {\em SERC London Centre for Marine Technology, Report No. FL29},
  1981.

\bibitem{meyer1948formulas}
E.~Meyer-Peter and R.~M{\"u}ller.
\newblock Formulas for bed-load transport.
\newblock In {\em Proceedings of the 2nd Meeting of the International
  Association for Hydraulic Structures Research}, pages 39--64. Stockholm,
  1948.

\bibitem{fernandez1976erosion}
R~Fernandez~Luque and R~Van~Beek.
\newblock Erosion and transport of bed-load sediment.
\newblock {\em Journal of Hydraulic Research}, 14(2):127--144, 1976.

\bibitem{van1984sediment}
L.C. Van~Rijn.
\newblock {Sediment transport, part I: bed load transport}.
\newblock {\em Journal of Hydraulic Engineering}, 110(10):1431--1456, 1984.

\bibitem{van1993principles}
L.C. Van~Rijn.
\newblock {\em Principles of sediment transport in rivers, estuaries and
  coastal seas}, volume 1006.
\newblock Aqua publications Amsterdam, 1993.

\bibitem{nielsen1992coastal}
P.~Nielsen.
\newblock {\em Coastal bottom boundary layers and sediment transport},
  volume~4.
\newblock World scientific, 1992.

\bibitem{camenen2005general}
Beno{\^\i}t Camenen and Magnus Larson.
\newblock A general formula for non-cohesive bed load sediment transport.
\newblock {\em Estuarine, Coastal and Shelf Science}, 63(1):249--260, 2005.

\bibitem{exner1920zur}
Felix~M Exner.
\newblock Zur physik der d{\"u}nen.
\newblock {\em Akad. Wiss. Wien Math. Naturwiss. Klasse}, 129(2a):929--952,
  1920.

\bibitem{exner1925uber}
Felix~M Exner.
\newblock Uber die wechselwirkung zwischen wasser und geschiebe in fl{\"u}ssen.
\newblock {\em Akad. Wiss. Wien Math. Naturwiss. Klasse}, 134(2a):165--204,
  1925.

\bibitem{soulsby1997dynamics}
R.~Soulsby.
\newblock {\em Dynamics of marine sands: a manual for practical applications}.
\newblock Thomas Telford, 1997.

\bibitem{wu2008computational}
W.~Wu.
\newblock {\em Computational river dynamics}.
\newblock CRC Press, 2008.

\bibitem{juez20142d}
C~Juez, J~Murillo, and P~Garc{\'\i}a-Navarro.
\newblock A 2d weakly-coupled and efficient numerical model for transient
  shallow flow and movable bed.
\newblock {\em Advances in Water Resources}, 71:93--109, 2014.

\bibitem{hudson2001numerical}
J.~Hudson.
\newblock {\em Numerical techniques for morphodynamic modelling}.
\newblock PhD thesis, University of Reading, 2001.

\bibitem{hudson2005numerical}
J.~Hudson, J.~Damgaard, N.~Dodd, T.~Chesher, and A.~Cooper.
\newblock Numerical approaches for 1{D} morphodynamic modelling.
\newblock {\em Coastal engineering}, 52(8):691--707, 2005.

\bibitem{cordier2011bedload}
St{\'e}phane Cordier, Minh~H Le, and T~Morales~de Luna.
\newblock Bedload transport in shallow water models: Why splitting (may) fail,
  how hyperbolicity (can) help.
\newblock {\em Advances in Water Resources}, 34(8):980--989, 2011.

\bibitem{hudson2003formulations}
J.~Hudson and P.K. Sweby.
\newblock Formulations for numerically approximating hyperbolic systems
  governing sediment transport.
\newblock {\em Journal of Scientific Computing}, 19(1-3):225--252, 2003.

\bibitem{hudson2005high}
J.~Hudson and P.K. Sweby.
\newblock A high-resolution scheme for the equations governing 2{D} bed-load
  sediment transport.
\newblock {\em International Journal for Numerical Methods in Fluids},
  47(10-11):1085--1091, 2005.

\bibitem{di2009two}
MJ~Castro D{\i}, Enrique~D Fern{\'a}ndez-Nieto, AM~Ferreiro, C~Par{\'e}s,
  et~al.
\newblock Two-dimensional sediment transport models in shallow water equations.
  a second order finite volume approach on unstructured meshes.
\newblock {\em Computer Methods in Applied Mechanics and Engineering},
  198(33):2520--2538, 2009.

\bibitem{murillo2010exner}
Javier Murillo and P~Garc{\'\i}a-Navarro.
\newblock An exner-based coupled model for two-dimensional transient flow over
  erodible bed.
\newblock {\em Journal of Computational Physics}, 229(23):8704--8732, 2010.

\bibitem{de2011duality}
T~Morales De~Luna, MJ~Castro D{\'\i}az, and C~Par{\'e}s Madronal.
\newblock A duality method for sediment transport based on a modified
  meyer-peter \& m{\"u}ller model.
\newblock {\em Journal of Scientific Computing}, 48(1-3):258--273, 2011.

\bibitem{serrano2012finite}
Alberto Serrano-Pacheco, Javier Murillo, and Pilar Garcia-Navarro.
\newblock Finite volumes for 2d shallow-water flow with bed-load transport on
  unstructured grids.
\newblock {\em Journal of Hydraulic Research}, 50(2):154--163, 2012.

\bibitem{fracarollo2003godunov}
L.~Fracarollo, H.~Capart, and Zech Y.
\newblock A {Godunov} method for the computation of erosional {Shallow Water}
  transients.
\newblock {\em International Journal for Numerical Methods in Fluids},
  41:951--976, 2003.

\bibitem{crnjaric2004balanced}
Nelida \v{C}rnjari\'{c} \v{Z}ic, Senka Vukovi\'{c}, and Luka Sopta.
\newblock Balanced finite volume weno and central weno schemes for the shallow
  water and the open-channel flow equations.
\newblock {\em J. Comput. Phys.}, 200(2):512--548, November 2004.

\bibitem{castro2008sediment}
M.J. Castro~Diaz, E.D. Fern{\'a}ndez-Nieto, and A.M. Ferreiro.
\newblock Sediment transport models in shallow water equations and numerical
  approach by high order finite volume methods.
\newblock {\em Computers \& Fluids}, 37(3):299--316, 2008.

\bibitem{delis2008relaxation}
A.I. Delis and I.~Papoglou.
\newblock Relaxation approximation to bed-load sediment transport.
\newblock {\em Journal of Computational and Applied Mathematics},
  213(2):521--546, 2008.

\bibitem{benkhaldoun2009solution}
F.~Benkhaldoun, S.~Sahmim, and M.~Seaid.
\newblock Solution of the sediment transport equations using a finite volume
  method based on sign matrix.
\newblock {\em SIAM Journal on Scientific Computing}, 31(4):2866--2889, 2009.

\bibitem{postacchini2012multi}
Matteo Postacchini, Maurizio Brocchini, Alessandro Mancinelli, and Marc Landon.
\newblock A multi-purpose, intra-wave, shallow water hydro-morphodynamic
  solver.
\newblock {\em Advances in Water Resources}, 38:13--26, 2012.

\bibitem{briganti2012efficient}
R~Briganti, N~Dodd, D~Kelly, and D~Pokrajac.
\newblock An efficient and flexible solver for the simulation of the
  morphodynamics of fast evolving flows on coarse sediment beaches.
\newblock {\em International Journal for Numerical Methods in Fluids},
  69(4):859--877, 2012.

\bibitem{bilanceri2012linearized}
Marco Bilanceri, Fran{\c{c}}ois Beux, Imad Elmahi, Herv{\'e} Guillard, and
  Maria~Vittoria Salvetti.
\newblock Linearized implicit time advancing and defect correction applied to
  sediment transport simulations.
\newblock {\em Computers \& Fluids}, 63:82--104, 2012.

\bibitem{devries1973river}
M.~De~Vries.
\newblock {\em River-bed variations-aggradation and degradation}.
\newblock I.H.A.R. International Seminar on Hydraulics of Alluvial Streams, New
  Dehli, 1973.

\bibitem{weinan2003heterogeneous}
Weinan E and Bjorn Engquist.
\newblock The heterogeneous multiscale methods.
\newblock {\em Communications in Mathematical Sciences}, 1(1):88--134, 2003.

\bibitem{wilkins1965eigenvalue}
J.H. Wilkins.
\newblock {\em The algebraic eigenvalue problem}, volume~87.
\newblock Oxford: Clarendon Press, 1965.

\bibitem{bale2002fluxdecom}
Derek~S. Bale, Randall~J. Leveque, Sorin Mitran, and James~A. Rossmanith.
\newblock A wave propagation method for conseration laws and balance laws with
  spatially varing flux functions.
\newblock {\em SIAM Journal on Scientific Computing}, 24(3):995--978, 2002.

\bibitem{harten1983highresolution}
A.~Harten.
\newblock High resolution schemes for hyperbolic conservation laws.
\newblock {\em Journal of Computaional Physics}, 49(3):357--393, 1983.

\bibitem{gottlieb1998tvdrungekutta}
S.~Gottlieb and C.-W. Shu.
\newblock Total varation diminishing {R}unge-{K}utta schemes.
\newblock {\em Mathematics of Computation}, 67(221):73--85, 1998.

\bibitem{vanleer1979muscl}
B.~van Leer.
\newblock Towars the ultimate conserative difference scheme. {\Rmnum{5}}. {A}
  second-order sequeal to {G}odunov's method.
\newblock {\em Journal of Computational Physics}, 32:101--136, 1979.

\bibitem{kim2005mlp}
K.H. Kim and Chongam Kim.
\newblock Accurate, efficient and monotonic numerical methods for
  multidimensional compressible flows: {P}art {\Rmnum{2}} : {M}ulti-dimensional
  limiting process.
\newblock {\em Journal of Computaional Physics}, 208:570--615, 2005.

\bibitem{deng2013robust}
J.~Deng, R.~Li, T.~Sun, and S.-N. Wu.
\newblock Robust a simulation for shallow flows with friction on rough
  topography.
\newblock {\em Numerical Mathematics: Theory, Methods and Applications},
  6(2):384--407, 2013.

\bibitem{de2009energetically}
Jean de~Dieu~Zabsonr{\'e}, Carine Lucas, and Enrique Fernandez-Nieto.
\newblock An energetically consistent viscous sedimentation model.
\newblock {\em Mathematical Models and Methods in Applied Sciences},
  19(03):477--499, 2009.

\bibitem{de19872dh}
HJ~(nd) De~Vriend.
\newblock 2dh mathematical modelling of morphological evolutions in shallow
  water.
\newblock {\em Coastal Engineering}, 11(1):1--27, 1987.

\end{thebibliography}
\end{document}